\begin{document}

\newtheorem{theorem}[subsection]{Theorem}
\newtheorem{proposition}[subsection]{Proposition}
\newtheorem{lemma}[subsection]{Lemma}
\newtheorem{corollary}[subsection]{Corollary}
\newtheorem{conjecture}[subsection]{Conjecture}
\newtheorem{prop}[subsection]{Proposition}
\numberwithin{equation}{section}
\newcommand{\mr}{\ensuremath{\mathbb R}}
\newcommand{\mc}{\ensuremath{\mathbb C}}
\newcommand{\dif}{\mathrm{d}}
\newcommand{\intz}{\mathbb{Z}}
\newcommand{\ratq}{\mathbb{Q}}
\newcommand{\natn}{\mathbb{N}}
\newcommand{\comc}{\mathbb{C}}
\newcommand{\rear}{\mathbb{R}}
\newcommand{\prip}{\mathbb{P}}
\newcommand{\uph}{\mathbb{H}}
\newcommand{\fief}{\mathbb{F}}
\newcommand{\majorarc}{\mathfrak{M}}
\newcommand{\minorarc}{\mathfrak{m}}
\newcommand{\sings}{\mathfrak{S}}
\newcommand{\fA}{\ensuremath{\mathfrak A}}
\newcommand{\mn}{\ensuremath{\mathbb N}}
\newcommand{\mq}{\ensuremath{\mathbb Q}}
\newcommand{\half}{\tfrac{1}{2}}
\newcommand{\f}{f\times \chi}
\newcommand{\summ}{\mathop{{\sum}^{\star}}}
\newcommand{\chiq}{\chi \bmod q}
\newcommand{\chidb}{\chi \bmod db}
\newcommand{\chid}{\chi \bmod d}
\newcommand{\sym}{\text{sym}^2}
\newcommand{\hhalf}{\tfrac{1}{2}}
\newcommand{\sumstar}{\sideset{}{^*}\sum}
\newcommand{\sumprime}{\sideset{}{'}\sum}
\newcommand{\sumprimeprime}{\sideset{}{''}\sum}
\newcommand{\sumflat}{\sideset{}{^\flat}\sum}
\newcommand{\shortmod}{\ensuremath{\negthickspace \negthickspace \negthickspace \pmod}}
\newcommand{\V}{V\left(\frac{nm}{q^2}\right)}
\newcommand{\sumi}{\mathop{{\sum}^{\dagger}}}
\newcommand{\mz}{\ensuremath{\mathbb Z}}
\newcommand{\leg}[2]{\left(\frac{#1}{#2}\right)}
\newcommand{\muK}{\mu_{\omega}}
\newcommand{\thalf}{\tfrac12}
\newcommand{\lp}{\left(}
\newcommand{\rp}{\right)}
\newcommand{\Lam}{\Lambda_{[i]}}
\newcommand{\lam}{\lambda}

\theoremstyle{plain}
\newtheorem{conj}{Conjecture}
\newtheorem{remark}[subsection]{Remark}

\makeatletter
\def\widebreve{\mathpalette\wide@breve}
\def\wide@breve#1#2{\sbox\z@{$#1#2$}%
     \mathop{\vbox{\m@th\ialign{##\crcr
\kern0.08em\brevefill#1{0.8\wd\z@}\crcr\noalign{\nointerlineskip}%
                    $\hss#1#2\hss$\crcr}}}\limits}
\def\brevefill#1#2{$\m@th\sbox\tw@{$#1($}%
  \hss\resizebox{#2}{\wd\tw@}{\rotatebox[origin=c]{90}{\upshape(}}\hss$}
\makeatletter

\title[The fourth moment of central values of quadratic Hecke $L$-functions in the Gaussian field]{The fourth moment of central values of quadratic Hecke $L$-functions in the Gaussian field}

\author{Peng Gao}
\address{School of Mathematical Sciences, Beihang University, Beijing 100191, P. R. China}
\email{penggao@buaa.edu.cn}
\begin{abstract}
 We obtain an asymptotic formula for the fourth moment of central values of a family of quadratic Hecke $L$-functions in the Gaussian field under the generalized Riemann hypothesis (GRH). We also establish lower bounds unconditionally and upper bounds under GRH for higher moments of the same family.
\end{abstract}

\maketitle

\noindent {\bf Mathematics Subject Classification (2010)}: 11M06, 11M41, 11M50  \newline

\noindent {\bf Keywords}: central values, Hecke $L$-functions, quadratic Hecke characters, moments

\section{Introduction}
\label{sec 1}

  Moments of central values of families of $L$-functions have been intensively studied in the literature in order to understand important arithmetic information they carry.  Although much progress has been made towards establishing asymptotic formulas for the first few moments for various families of $L$-functions, little is known for the higher moments. In connection with random matrix theory, conjectures on the order of magnitude for the moments were made by J. P. Keating and N. C. Snaith in \cite{Keating-Snaith02}. A simple and powerful method towards establishing lower bounds of these
conjectured results was developed by Z. Rudnick and K. Soundararajan in \cite{R&Sound} and \cite{R&Sound1}.

  For the family of quadratic Dirichlet $L$-functions, the above method of Rudnick and Soundararajan allows them to show in \cite{R&Sound1} that for every even natural number $k$,
\begin{align}
\label{lower}
 \sum_{\substack{ d \in \mathcal{D} \\ |d| \leq X}}L(\tfrac{1}{2},\chi_d)^k \gg_k X(\log X)^{\frac{k(k+1)}{2}},
\end{align}
 where $\chi_d = \left(\frac{d}{\cdot} \right)$ is the Kronecker symbol and $\mathcal{D}$  denotes the set of fundamental discriminants.

 In the other direction, Soundararajan \cite{Sound01} proved that, assuming the generalized Riemann hypothesis (GRH), for all real $k \geq 0$,
 \begin{align}
\label{upper}
\sum_{\substack{ d \in \mathcal{D} \\ |d| \leq X}}L(\tfrac{1}{2},\chi_d)^k\ll_{k,\varepsilon} X(\log X)^{\frac{k(k+1)}{2}+\varepsilon}.
\end{align}
   The above result was further sharpened by A. J. Harper in \cite{Harper} to remove the $\varepsilon$ power.

 Besides the lower and upper bounds for the moments given in \eqref{lower} and \eqref{upper}, asymptotic formulas are known for integers $1 \leq k \leq 4$ in the form
\begin{align}
\label{Lasymp}
  \sum_{\substack{0<d\leq X \\ (d,2) =1 \\ \text{$d$ square-free} }}L(\tfrac{1}{2},\chi_{8d})^k   = XP_{\frac{k(k+1)}{2}}(\log X) + E_k(X),
\end{align}
  where $P_n(x)$ is an explicit linear polynomials of degree $n$ and $E_k(X)$ is the error term.  Here we note that for odd, square-free $d>0$, the character  $\chi_{8d}$ is primitive modulo $8d$ satisfying $\chi_{8d}(-1) = 1$. Note also that we choose to present the asymptotic formulas for a family which is preferred by most recent studies due to technical reasons instead of the family appearing in \eqref{lower} and \eqref{upper}.

 Evaluation of the first two moments ($k=1,2$) in \eqref{Lasymp} was initiated by M. Jutila in \cite{Jutila}. The error terms in Jutila's results were subsequently improved in \cites{ViTa, DoHo, Young1} for the first moment and in \cites{sound1, Sono} for the second moment. For smoothed first moment, a result of D. Goldfeld and J. Hoffstein in \cite{DoHo} implies that one can take $E_1(x)=O(X^{1/2 + \varepsilon})$. An error term of the same size was later obtained by M. P. Young in \cite{Young1} using a recursive approach. The same approach was then adapted by K. Sono \cite{Sono} to show that one can take $E_2(x)=O(X^{1/2 + \varepsilon})$ for smoothed second moment. The sizes of these error terms then make the expressions given in \eqref{Lasymp} in agreement with a conjecture made by J. B. Conrey, D. W. Farmer, J. P. Keating, M. O. Rubinstein and N. C. Snaith in \cite{CFKRS} on asymptotic behaviours of these moments of the family of quadratic Dirichlet $L$-functions.

  The third moment given in \eqref{Lasymp} was originally obtained by Soundararajan in \cite{sound1}. The error term in Soundararajan's result was later improved by Young in \cite{Young2} to be $E_3(X)=O(X^{\frac{3}{4}+\varepsilon})$ for the smoothed version.  Under GRH, Q. Shen \cite{Shen} established an asymptotic formula for the fourth moment given in \eqref{Lasymp} with some savings on the powers of $\log X$ in the error term. The result of Shen is achieved by combining lower and upper bounds for the fourth moment, with a lower bound obtained unconditionally using the method of Rudnick and Soundararajan in \cites{R&Sound, R&Sound1} (note that such a lower bound is stated in \cites{R&Sound1} without proof). An upper bound for the fourth moment is obtained under GRH by making use of upper bounds for the shifted moments of quadratic Dirichlet $L$-functions. This approach originated from earlier work of Soundararajan \cite{Sound01} as well as Soundararajan and Young \cite{S&Y}, especially the treatment in \cite{S&Y} on the second moment of quadratic twists of modular $L$-functions.

 The result of Soundararajan and Young in \cite{S&Y} supplies another example on moments of quadratic families of $L$-functions. In \cite{Gao1}, the author considered the analogue case of evaluating moments of central values of a family of quadratic Hecke $L$-functions in the Gaussian field to obtain the smoothed first three moments of such family with power saving error terms.

  To describe the family studied in \cite{Gao1}, we let $K=\mq(i)$ be the Gaussian field throughout the paper and $\mathcal{O}_K=\mz[i]$ be the ring of integers of $K$. We  denote $\chi$ for a Hecke character of $K$ and we recall that a Hecke character $\chi$ of $K$ is said to be of trivial infinite type if its component at infinite places of $K$ is trivial.  We  write $L(s,\chi)$ for the $L$-function associated to $\chi$ and $\zeta_{K}(s)$ for the Dedekind zeta function of $K$. In the rest of the paper, we reserve the expression $\chi_c$ for the the quadratic residue symbol $\leg {c}{\cdot}$ defined in Section \ref{sec2.4}. We also denote $N(n)$ for the norm of any $n \in K$.

  We then consider the family of $L$-functions:
\begin{align}
\label{Lfamily}
 \mathcal F = \big\{ L(\thalf, \chi_{(1+i)^5d}) : d \in \mathcal{O}_K \hspace{0.05in} \text{odd and square-free} \big\}.
\end{align}
  Here we say that any $d \in \mathcal{O}_K$ is odd if $(d,2)=1$ and $d$ is square-free if the ideal $(d)$ is not divisible by the square of any prime ideal. Note also that (see Section \ref{sec2.4} below) the symbol $\chi_{(1+i)^5d}$ defines a primitive quadratic Hecke character modulo $(1+i)^5d$ of trivial infinite type when $d \in \mathcal{O}_K$ is odd and square-free.

  The first three moments of the above family $\mathcal F$ is studied in \cite{Gao1}. In this paper, our main purpose is to establish the fourth moment of the same family under GRH. In this process, we shall need to make use of results on upper bounds for shifted moments of the family under GRH. Therefore, we begin by considering lower and upper bounds for higher moments of this family. Unconditionally, we have the following result concerning the lower bounds.
\begin{theorem}
\label{theo:lowerbound}
 For every even natural number $k$, we have
\begin{align}
\label{lowerbound}
 \sumstar_{\substack{(d,2)=1 \\ N(d) \leq X}} L(\thalf, \chi_{(1+i)^5d})^k  \gg_k  X(\log X)^{k(k+1)/2}.
\end{align}
 Here the ``$*$'' on the sum over $d$ means that the sum is restricted to square-free elements $d$ in $\mathcal{O}_K$.
\end{theorem}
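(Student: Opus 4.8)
The plan is to adapt the Rudnick--Soundararajan method (from \cites{R&Sound, R&Sound1}) to the Gaussian setting. The idea is to compare two quantities: a first moment twisted by a short Dirichlet polynomial and a second moment of that same polynomial. Concretely, fix a large parameter $x$ (a small power of $X$, say $x = X^{1/\text{(large constant depending on }k)}$) and define
\begin{align*}
 A(d) = \sum_{\substack{N(n) \leq x \\ (n,2)=1}} \frac{\chi_{(1+i)^5d}(n)}{\sqrt{N(n)}},
\end{align*}
a short approximation to $L(\tfrac12,\chi_{(1+i)^5d})$ (up to normalizing constants). The heart of the argument is to establish, for the family $\mathcal F$, the two estimates
\begin{align*}
 S_1 := \sumstar_{\substack{(d,2)=1 \\ N(d) \leq X}} L(\thalf,\chi_{(1+i)^5d}) A(d)^{k-1} \gg X (\log X)^{\frac{(k-1)k}{2}+1},
 \qquad
 S_2 := \sumstar_{\substack{(d,2)=1 \\ N(d) \leq X}} A(d)^{k} \ll X (\log X)^{\frac{k(k+1)}{2}}.
\end{align*}
Granting these, Hölder's inequality in the form $S_1 \leq \big(\sumstar L(\thalf,\chi_{(1+i)^5d})^k\big)^{1/k} S_2^{(k-1)/k}$ immediately yields $\sumstar L(\thalf,\chi_{(1+i)^5d})^k \gg S_1^k/S_2^{k-1} \gg X (\log X)^{k(k+1)/2}$, which is the claim. (One also needs $L(\tfrac12,\chi_{(1+i)^5d}) \geq 0$ to drop the sign restriction after Hölder; this positivity is known for this family via the work on nonnegativity of central values of quadratic Hecke $L$-functions, or can be sidestepped by restricting to $d$ with $L(\tfrac12,\cdot) \geq 0$, which still carries a positive proportion of the mass.)

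The key inputs are the diagonal asymptotics. For $S_2$, expanding $A(d)^k = \sum_{N(n_1)\cdots N(n_k)\leq x^k} (N(n_1\cdots n_k))^{-1/2}\, \chi_{(1+i)^5d}(n_1\cdots n_k)$ and summing over square-free $d$ with $N(d)\leq X$ using Poisson-type summation / orthogonality of quadratic residue symbols in $\mathcal O_K$ (as developed in \cite{Gao1}), the character sum over $d$ is essentially supported on $n_1\cdots n_k = \square$, producing a main term of size $X$ times a sum over $k$-tuples whose product is a square; a standard contour-integral / Rankin--Selberg estimate bounds this by $X(\log x)^{k(k+1)/2} \asymp X(\log X)^{k(k+1)/2}$. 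For $S_1$, I would use an approximate functional equation for $L(\tfrac12,\chi_{(1+i)^5d})$ (available in \cite{Gao1}) to write it as a Dirichlet polynomial of length roughly $\sqrt{N(d)}$, multiply by $A(d)^{k-1}$, and again sum over square-free $d$; the diagonal terms where the combined argument is a square give the main term $\asymp X(\log X)^{(k-1)k/2 + 1}$ — the extra $\log X$ coming from the longer $L$-function sum relative to the $A(d)$-factors — while the off-diagonal contribution is controlled by the polynomial length restriction $x$ being a sufficiently small power of $X$.

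The main obstacle, as in the classical case, is the off-diagonal error term in $S_1$: after opening the approximate functional equation one is left with sums of quadratic residue symbols $\sum_{d} \chi_{(1+i)^5d}(m)$ over square-free $d$ with $m$ a non-square, and one must show these contribute $o(X(\log X)^{(k-1)k/2+1})$. This requires quadratic reciprocity in $\mz[i]$ together with Pólya--Vinogradov-type cancellation (or the Poisson summation machinery of \cite{Gao1}) and forces the constraint that the total length $x^{k-1}\cdot \sqrt{X}$ stays below $X$, hence $x \leq X^{1/(2(k-1))-\varepsilon}$ or so. A secondary technical point is handling the square-free sieve over $\mathcal O_K$ (writing $\mu_{[i]}^2$ via a divisor condition and truncating) without damaging the main term; this is routine but must be done carefully so that the parameter choices in the sieve, the AFE, and the off-diagonal bound are mutually consistent. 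None of these steps is genuinely new over $\mq$; the work is in transcribing the reciprocity and Poisson-summation lemmas for $\mz[i]$, all of which are already available from \cite{Gao1}.
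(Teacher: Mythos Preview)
Your overall strategy matches the paper's exactly --- Rudnick--Soundararajan via H\"older applied to $S_1 = \sum L \cdot A^{k-1}$ and $S_2 = \sum A^k$, with the off-diagonal handled by quadratic reciprocity and P\'olya--Vinogradov in $\mathcal{O}_K$ --- but there is one genuine error that would cause the argument to fail as written.

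Your claimed exponent for $S_1$ is wrong. You assert $S_1 \gg X(\log X)^{(k-1)k/2 + 1}$, reasoning that the diagonal from the $A^{k-1}$ part contributes $(\log X)^{(k-1)k/2}$ and the long $L$-sum adds one further logarithm. But the diagonal condition is $n \cdot n_1 \cdots n_{k-1} = \square$, not $n_1 \cdots n_{k-1} = \square$ separately; writing $n_1 \cdots n_{k-1} = r s^2$ with $r$ squarefree forces $n = r \ell^2$, and after the $\ell$-sum produces the factor $\log X$ one must still sum $\sum_{rs^2 = n_1 \cdots n_{k-1}} (N(rs))^{-1}$ over \emph{all} squarefree $r$, not just $r=1$. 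A Selberg--Delange count (as in \eqref{daverage} of the paper) shows this sum is $\asymp (\log x)^{(k-1)(k+2)/2}$, so that in fact $S_1 \gg X(\log X)^{k(k+1)/2}$, which is what the paper proves and what is needed. With your stated exponent, H\"older would yield only $\sum L^k \gg X(\log X)^{k(3-k)/2}$, which is vacuous for every even $k \geq 4$.

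A smaller point: you do not need nonnegativity of $L(\tfrac12, \chi_{(1+i)^5d})$, which is not known unconditionally. The functional equation here has root number $+1$ (Lemma~\ref{lem: primquadGausssum}), so $L(\tfrac12, \chi_{(1+i)^5d}) \in \mathbb{R}$; since $k$ is even, $L^k = |L|^k \geq 0$ automatically and H\"older goes through with no sign issue. The paper uses exactly this observation to restrict to a dyadic range $X/2 < N(d) \leq X$ at the outset.
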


  The proof of Theorem \ref{theo:lowerbound} is given in Section \ref{sec: pfthm1} and it follows the arguments in the proof of \cite[Theorem 2]{R&Sound1} for the case of Dirichlet $L$-functions. Also similar to the remarks given below \cite[Theorem 2]{R&Sound1}, the proof of Theorem \ref{theo:lowerbound} can be applied to give lower bounds for the moments for all rational numbers $k$, provided that we replace $L(\tfrac 12,\chi_d)^k$ by $|L(\tfrac 12,\chi_d)|^k$.

  Before we state a corresponding result on the upper bounds, we need to introduce some notations.  Let $x\in \mathbb{R}$ such that $x \geq 10$ and  $z \in \mathbb{C}$, we define
\begin{align}
\label{Ldef}
\mathcal{L}(z,x)=\left\{
 \begin{array}
  [c]{ll}
  \operatorname{log}\operatorname{log}x & |z|\leq(\operatorname{log}x)^{-1},\\
  -\operatorname{log}|z|& (\operatorname{log}x)^{-1}< |z|\leq 1,\\
  0&|z| >1.
 \end{array}
 \right.
\end{align}
 We further define for $z_1, z_2 \in \mathbb{C}$,
\begin{align}
\label{Mdef}
\mathcal{M}(z_1,z_2,x) =&\frac{1}{2} \left(\mathcal{L}(z_1,x)+\mathcal{L}(z_2,x) \right), \\
\label{Vdef}
\mathcal{V}  (z_1,z_2,x)=& \frac{1}{2}\left(\mathcal{L}(2z_1,x)+\mathcal{L}(2z_2,x)+\mathcal{L}(2 \Re (z_1),x)+\mathcal{L}(2\Re (z_2),x)+2\mathcal{L}(z_1+z_2,x)+2\mathcal{L}(z_1+\overline{z_2},x)\right).
\end{align}

   Then we have the following result on upper bounds for shifted moments of the family $\mathcal F$  given in \eqref{Lfamily} under GRH.
\begin{theorem}
\label{upperbound}
Assume GRH for $\zeta_K(s)$ and $L(s, \chi_{(1+i)^5d})$ for all odd, square-free $d$.  Let $X$ be large and let $z_1,z_2 \in \mathbb{C}$ with $0 \leq \Re (z_1), \Re (z_2) \leq \frac{1}{\log X}$, and $|\Im (z_1)|, |\Im (z_2)| \leq X$. Then for any positive real number $k$ and any $\varepsilon>0$, we have
 \begin{align*}
 \sumstar_{\substack{(d,2)=1 \\ N(d) \leq X}}|L(\tfrac{1}{2}+z_1,\chi_{(1+i)^5d})
 L(\tfrac{1}{2}+z_2,\chi_{(1+i)^5d})|^{k}
  \ll_{k,\varepsilon}X(\operatorname{log}X)^\varepsilon
  \operatorname{exp}\left(k \mathcal{M}(z_1,z_2,X)+\frac{k^2}{2}\mathcal{V}(z_1,z_2,X)\right).
 \end{align*}
\end{theorem}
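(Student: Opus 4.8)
The plan is to follow the by-now standard Soundararajan--Harper approach to upper bounds for moments of $L$-functions, adapted to the Gaussian field and to shifted central values. The starting point is a conditional upper bound for $\log|L(\tfrac12+z,\chi_{(1+i)^5d})|$ in terms of a short Dirichlet polynomial over prime ideals. Under GRH one has, for a parameter $x\geq 2$ and $s=\tfrac12+z$ with $\Re(z)\geq 0$,
\begin{align*}
 \log|L(\tfrac12+z,\chi_{(1+i)^5d})|
 \leq \Re\sum_{N(\mathfrak{p})\leq x}\frac{\chi_{(1+i)^5d}(\mathfrak{p})}{N(\mathfrak{p})^{1/2+z}}\cdot\frac{\log(x/N(\mathfrak{p}))}{\log x}
 +O\!\left(\frac{\log N(d)}{\log x}\right)+(\text{a contribution from }\mathfrak{p}^2),
\end{align*}
which is the analogue of Soundararajan's main inequality; the $\mathfrak{p}^2$-term is $O(\mathcal{L}(2z,X))$ type in size and is what produces the $\mathcal{L}(2z_j,X)$ and $\mathcal{L}(2\Re z_j,X)$ pieces of $\mathcal{V}$. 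I would cite or reprove this from the approximate functional equation / explicit formula for $\zeta_K$ and $L(s,\chi_{(1+i)^5d})$, all of which is available for Hecke $L$-functions over $K=\mq(i)$.

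The core of the argument is then Harper's refinement: rather than exponentiating a single Dirichlet polynomial, one breaks the prime range $N(\mathfrak{p})\leq X$ into dyadic-type blocks $(\exp((\log X)/e^{j+1}),\exp((\log X)/e^{j})]$ for $0\leq j\leq \mathcal{J}$, writes the corresponding short polynomials $\mathcal{P}_j(z)=\Re\sum_{\mathfrak{p}\text{ in block }j}\chi_{(1+i)^5d}(\mathfrak{p})N(\mathfrak{p})^{-1/2-z}w_j(\mathfrak{p})$, and partitions the family according to the first block on which $|\mathcal{P}_j|$ is large. On the ``good'' set where every $\mathcal{P}_j(z_1)$ and $\mathcal{P}_j(z_2)$ is small one uses $e^{t}\leq 1+t+\cdots$ truncated at a length matched to the block, so that $\exp(k\mathcal{P}_j)$ is replaced by a genuine Dirichlet polynomial of controlled length whose moments can be computed; on the ``bad'' sets one pays a factor but gains a huge saving from the rarity of large $\mathcal{P}_j$, estimated again by high moments of $\mathcal{P}_j$. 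Throughout, the key analytic input is a mean-value estimate: for a Dirichlet polynomial $A(d)=\sum_{N(n)\leq N}a_n\chi_{(1+i)^5d}(n)$ with $N\leq X^{\delta}$ and $a_n$ supported on odd squares-times-... one needs
\begin{align*}
 \sumstar_{\substack{(d,2)=1\\ N(d)\leq X}}|A(d)|^{2}\ll X\sum_{\substack{N(n_1)=N(n_2)\\ n_1n_2=\square}}|a_{n_1}a_{n_2}|+(\text{small}),
\end{align*}
i.e.\ the ``diagonal'' contribution dominates. This follows from Poisson summation over $\mathcal{O}_K$ for the quadratic residue symbol together with the bound on the resulting character sums; the relevant Poisson-type formula and the evaluation of Gauss sums for $\leg{\cdot}{\cdot}$ in $\mz[i]$ are exactly the tools developed in the author's earlier work \cite{Gao1}, so I would import them.

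Carrying this out, the diagonal terms produce sums of the shape $\sum_{\mathfrak{p}} N(\mathfrak{p})^{-1-2\Re z_j}$, $\sum_{\mathfrak{p}}N(\mathfrak{p})^{-1-(z_1+z_2)}$, $\sum_{\mathfrak{p}}N(\mathfrak{p})^{-1-(z_1+\overline{z_2})}$ and so on over the relevant prime ranges; by the prime ideal theorem for $K$ (equivalently, $\zeta_K(s)$ has a simple pole at $s=1$ with the right residue) each such sum over primes up to $X$ is $\mathcal{L}(2\Re z_j,X)+O(1)$, $\mathcal{L}(z_1+z_2,X)+O(1)$, etc., which is precisely how the six terms in the definition \eqref{Vdef} of $\mathcal{V}$ and the two terms in \eqref{Mdef} of $\mathcal{M}$ arise, with the weights $\tfrac12$ and the $k^2/2$, $k$ coefficients falling out of the combinatorics of $|{\cdot}|^k=(\,\cdot\,)^{k/2}(\overline{\,\cdot\,})^{k/2}$ and of the truncated exponential. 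The case of general real $k>0$ (not just even integers) is handled exactly as in Harper: one still only ever computes even integer moments of the short polynomials, never of $L$ itself, so no positivity or GRH-for-a-twist issue beyond what is assumed arises. The main obstacle, and the place where genuine work (as opposed to bookkeeping) is needed, is establishing the mean-value / large-values estimates for $\sumstar_{N(d)\leq X}$ of products of the block polynomials $\mathcal{P}_j$ \emph{uniformly in the shifts} $z_1,z_2$ with $|\Im z_j|\leq X$: one must verify that the off-diagonal terms coming from Poisson summation remain negligible even when the $a_n$ carry oscillatory factors $N(n)^{-it}$ with $|t|$ as large as $X$, and that the restriction to square-free $d$ (the $*$) can be inserted via Möbius over $\mathcal{O}_K$ without destroying these bounds — this is where the Gaussian-field arithmetic (splitting of primes, units $\{\pm1,\pm i\}$, the modulus $(1+i)^5$) has to be handled with care rather than quoted.
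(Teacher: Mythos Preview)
Your proposal would prove the theorem, but it takes a different route from the paper. You outline Harper's refinement: split the primes into blocks $(\exp((\log X)/e^{j+1}),\exp((\log X)/e^j)]$, replace $e^{k\mathcal P_j}$ by a truncated Taylor polynomial on the ``good'' set, and control the ``bad'' sets by high moments of the block polynomials. The paper instead uses Soundararajan's simpler large-values method: from the same conditional inequality for $\log|L|$ (Lemma~\ref{lem: logLbound}) it proves in Proposition~\ref{propNbound} pointwise bounds on the counting function
\[
\mathcal N(V;z_1,z_2,X)=\#\{d:\ \log|L(\tfrac12+z_1)L(\tfrac12+z_2)|\ge V+\mathcal M\}
\]
of the shape $X\exp(-V^2/2\mathcal V)$ in the central range (and sharper bounds for large $V$), and then recovers the $k$-th moment by the identity $\sum|L|^{k}=k\int e^{kV+k\mathcal M}\mathcal N(V)\,dV$. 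This is exactly why the $(\log X)^\varepsilon$ appears in the statement: Soundararajan's method loses it, whereas your Harper scheme would in principle remove it, at the cost of substantially more bookkeeping than the theorem requires.

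Two smaller points. First, the mean-value input the paper actually uses (Lemma~\ref{lem:2.5}) is not Poisson summation but the much cheaper smoothed P\'olya--Vinogradov bound \eqref{PV} over $\mathcal O_K$; this already forces the off-diagonal in $\sumstar_{N(d)\le X}|\sum_{N(\varpi)\le y}a(\varpi)\chi_{(1+i)^5d}(\varpi)N(\varpi)^{-1/2}|^{2m}$ to be negligible when $y^m\le X^{1/2-\varepsilon}$. Second, your worry about uniformity in the shifts with $|\Im z_j|\le X$ is misplaced: the shifts enter only through the coefficients $a(\varpi)=N(\varpi)^{-z_j}\cdot(\text{smooth weight})$, which are bounded in absolute value, and the off-diagonal estimate depends only on $|a(\varpi)|$, not on any oscillation. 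The genuinely delicate step in the paper is rather Lemma~\ref{lem: primesquareLbound}, which identifies the prime-square sum with $\mathcal L(z,x)+O(1)$ uniformly in $z$ and thereby pins down $\mathcal M$ and $\mathcal V$.
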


   The proof of  Theorem \ref{upperbound} is given in Section \ref{sec:upper bd} and our proof follows closely the approaches in \cites{Sound01,S&Y, Shen}. We note that similar  results to Theorem \ref{upperbound} were obtained for the moments of the Riemann zeta function by V. Chandee \cite{Chandee} and for the moments of all Dirichlet $L$-functions modulo $q$ by M. Munsch  \cite{Munsch}.

  We now give two consequences of Theorem \ref{upperbound}. First, by setting $z_1=z_2=0$ in Theorem \ref{upperbound}, we deduce immediately the following upper bounds for moments of the central values of the family $\mathcal F$ given in \eqref{Lfamily}.
\begin{corollary}
\label{cor: upperbound}
Assume GRH for $\zeta_K(s)$ and $L(s, \chi_{(1+i)^5d})$ for all odd, square-free $d$.  For any positive real number $k$ and any $\varepsilon>0$, we have for large $X$,
 \begin{align*}
 \sumstar_{\substack{(d,2)=1 \\ N(d) \leq X}}|L(\tfrac{1}{2},\chi_{(1+i)^5d})|^{k}
  \ll_{k,\varepsilon}X(\operatorname{log}X)^{k(k+1)/2+\varepsilon}.
 \end{align*}
\end{corollary}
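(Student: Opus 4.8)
The plan is to obtain Corollary~\ref{cor: upperbound} as a direct specialization of Theorem~\ref{upperbound}. The first observation is that putting $z_1=z_2=0$ in Theorem~\ref{upperbound} collapses the shifted product into a single central value, so that the left-hand side there becomes
$\sumstar_{\substack{(d,2)=1 \\ N(d) \leq X}}|L(\tfrac12,\chi_{(1+i)^5d})|^{2k}$. Hence, to bound the $k$-th moment of the central values for a given positive real $k$, I would apply Theorem~\ref{upperbound} with its parameter $k$ replaced by $k/2$, which is again an admissible positive real. The remaining hypotheses of Theorem~\ref{upperbound}, namely $0 \leq \Re(z_1),\Re(z_2) \leq 1/\log X$ and $|\Im(z_1)|,|\Im(z_2)| \leq X$, hold trivially for $z_1=z_2=0$.

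Next I would evaluate $\mathcal{M}(0,0,X)$ and $\mathcal{V}(0,0,X)$ from the definitions in \eqref{Ldef}, \eqref{Mdef} and \eqref{Vdef}. Since $0 \leq (\log X)^{-1}$, the first case of \eqref{Ldef} applies and $\mathcal{L}(0,X)=\log\log X$, so every $\mathcal{L}$-evaluation occurring in \eqref{Mdef} and \eqref{Vdef} equals $\log\log X$. Counting the terms then gives $\mathcal{M}(0,0,X)=\log\log X$ and $\mathcal{V}(0,0,X)=\tfrac12\cdot 8\log\log X=4\log\log X$. Substituting these values, together with the parameter $k/2$ in place of $k$, into the bound of Theorem~\ref{upperbound}, the exponential factor becomes
$\exp\bigl(\tfrac{k}{2}\log\log X+\tfrac{(k/2)^2}{2}\cdot 4\log\log X\bigr)=\exp\bigl(\tfrac{k(k+1)}{2}\log\log X\bigr)=(\log X)^{k(k+1)/2}$, and absorbing the spare $(\log X)^\varepsilon$ into $X(\log X)^\varepsilon$ yields the claimed bound $X(\log X)^{k(k+1)/2+\varepsilon}$.

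I do not expect any genuine obstacle in this step: it is a routine substitution, and all of the analytic content has already been packaged into Theorem~\ref{upperbound}. The only points that require (minimal) care are the bookkeeping of which "$k$" is which — the passage from the shifted product in Theorem~\ref{upperbound} to a single squared $L$-value forces the rescaling $k\mapsto k/2$ — and the constant $4$ coming from the six $\mathcal{L}$-terms in \eqref{Vdef}, two of which carry a coefficient $2$.
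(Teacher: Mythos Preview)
Your proposal is correct and matches the paper's approach exactly: the paper derives the corollary by the one-line remark that it follows by setting $z_1=z_2=0$ in Theorem~\ref{upperbound}. You have simply spelled out the bookkeeping (the rescaling $k\mapsto k/2$ and the evaluations $\mathcal{M}(0,0,X)=\log\log X$, $\mathcal{V}(0,0,X)=4\log\log X$) that the paper leaves implicit.
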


  The second consequence concerns upper bounds for shifted fourth moment of the family $\mathcal F$ given in \eqref{Lfamily}, which is what we need in a refined study  in Section \ref{sec: pfmainthm} on the fourth moment under GRH.
\begin{corollary}
\label{4momentupperbound}
Assume GRH for $\zeta_K(s)$ and $L(s, \chi_{(1+i)^5d})$ for all odd, square-free $d$. Let $X$ be large and let $z_1,z_2 \in \mathbb{C}$ with $0 \leq \Re (z_1), \Re (z_2) \leq \frac{1}{\log X}$ and $|\Im (z_1)|, |\Im (z_2)| \leq X$. Then
 \[
  \sumstar_{\substack{(d,2)=1 \\ N(d) \leq X}}  |L(\tfrac{1}{2}+z_1,\chi_{(1+i)^5d})|^2|L(\tfrac{1}{2}+z_2,\chi_{(1+i)^5d})|^2
 \ll X(\operatorname{log}X)^{4+\varepsilon}\left(1+\min \left \{ (\log X)^6, \frac{1}{(|\Im (z_1)|- |\Im (z_2)|)^6} \right \} \right).
 \]
\end{corollary}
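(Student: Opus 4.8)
The plan is to obtain Corollary~\ref{4momentupperbound} as an immediate consequence of Theorem~\ref{upperbound} taken with $k=2$, followed by a routine estimation of the quantities $\mathcal{M}=\mathcal{M}(z_1,z_2,X)$ and $\mathcal{V}=\mathcal{V}(z_1,z_2,X)$ that appear in the exponent there. Since the constraints on $z_1,z_2$ are exactly those of Theorem~\ref{upperbound}, and since $|L(\tfrac12+z_1,\chi)|^{2}|L(\tfrac12+z_2,\chi)|^{2}=|L(\tfrac12+z_1,\chi)L(\tfrac12+z_2,\chi)|^{2}$, applying that theorem with $k=2$ bounds the sum on the left of Corollary~\ref{4momentupperbound} by $X(\log X)^{\varepsilon}\exp\bigl(2\mathcal{M}+2\mathcal{V}\bigr)$. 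Hence it remains only to prove the deterministic estimate
\[
\exp\bigl(2\mathcal{M}+2\mathcal{V}\bigr)\ \leq\ (\log X)^{4}\Bigl(1+\min\bigl\{(\log X)^{6},\ (|\Im(z_1)|-|\Im(z_2)|)^{-6}\bigr\}\Bigr).
\]

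For this I would first record from \eqref{Ldef} the elementary bound $e^{\mathcal{L}(z,x)}\leq\min\{\log x,\ \max(1,1/|z|)\}$, valid for all $z\in\mathbb{C}$ (it follows by inspecting the three ranges of $|z|$ in \eqref{Ldef}, and is in fact an equality). Expanding $2\mathcal{M}+2\mathcal{V}$ via \eqref{Mdef} and \eqref{Vdef}, we have
\[
\exp\bigl(2\mathcal{M}+2\mathcal{V}\bigr)=e^{\mathcal{L}(z_1,X)}e^{\mathcal{L}(2z_1,X)}\cdot e^{\mathcal{L}(z_2,X)}e^{\mathcal{L}(2z_2,X)}\cdot e^{\mathcal{L}(2\Re(z_1),X)}e^{\mathcal{L}(2\Re(z_2),X)}\cdot e^{2\mathcal{L}(z_1+z_2,X)}e^{2\mathcal{L}(z_1+\overline{z_2},X)}.
\]
Each of the two factors $e^{\mathcal{L}(2\Re(z_j),X)}$ is at most $\log X$ trivially (as $\mathcal{L}\leq\log\log X$), which accounts for $(\log X)^{2}$. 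For the remaining six, set $h(u):=\bigl(\min\{\log X,\ \max(1,1/u)\}\bigr)^{2}$, so that $h$ is nonincreasing with $h(0)=(\log X)^{2}$; then the displayed bound on $e^{\mathcal{L}}$, together with $|z_j|\geq|\Im(z_j)|$, $|z_1+z_2|\geq|\Im(z_1)+\Im(z_2)|$ and $|z_1+\overline{z_2}|\geq|\Im(z_1)-\Im(z_2)|$, gives $e^{\mathcal{L}(z_j,X)}e^{\mathcal{L}(2z_j,X)}\leq h(|\Im(z_j)|)$, $e^{2\mathcal{L}(z_1+z_2,X)}\leq h(|\Im(z_1)+\Im(z_2)|)$ and $e^{2\mathcal{L}(z_1+\overline{z_2},X)}\leq h(|\Im(z_1)-\Im(z_2)|)$. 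Consequently $\exp(2\mathcal{M}+2\mathcal{V})\leq(\log X)^{2}\,h(|\Im(z_1)|)\,h(|\Im(z_2)|)\,h(|\Im(z_1)+\Im(z_2)|)\,h(|\Im(z_1)-\Im(z_2)|)$.

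To finish I would put $a=\max\{|\Im(z_1)|,|\Im(z_2)|\}$, $b=\min\{|\Im(z_1)|,|\Im(z_2)|\}$ and $\delta=a-b=\bigl||\Im(z_1)|-|\Im(z_2)|\bigr|$, and use the identities of sets $\{|\Im(z_1)|,|\Im(z_2)|\}=\{a,b\}$ and $\{|\Im(z_1)+\Im(z_2)|,|\Im(z_1)-\Im(z_2)|\}=\{a+b,\delta\}$. Since $h$ is nonincreasing and $a+b\geq a\geq\delta\geq 0$, we get $h(a),h(a+b)\leq h(\delta)$ and $h(b)\leq(\log X)^{2}$, whence $h(|\Im(z_1)|)h(|\Im(z_2)|)h(|\Im(z_1)+\Im(z_2)|)h(|\Im(z_1)-\Im(z_2)|)\leq(\log X)^{2}h(\delta)^{3}$, and a short case check on the definition of $h$ — treating $\delta\geq 1$, $0<\delta<1$, and $\delta=0$ separately — yields $h(\delta)^{3}\leq 1+\min\{(\log X)^{6},\delta^{-6}\}$. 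Assembling the pieces gives $\exp(2\mathcal{M}+2\mathcal{V})\leq(\log X)^{4}\bigl(1+\min\{(\log X)^{6},(|\Im(z_1)|-|\Im(z_2)|)^{-6}\}\bigr)$, and combined with the first paragraph this proves the corollary. The whole argument is elementary once Theorem~\ref{upperbound} is available; the only point requiring a little care is the bookkeeping of which of the eight $\mathcal{L}$-factors produces the unconditional $(\log X)^{4}$ and which three are responsible for the saving — the key observation being that among $|\Im(z_1)\pm\Im(z_2)|$ one of them equals $|\Im(z_1)|+|\Im(z_2)|$ while the other equals $\bigl||\Im(z_1)|-|\Im(z_2)|\bigr|$, so that three of the six relevant factors can be majorized by $h(\delta)$.
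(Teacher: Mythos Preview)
Your proof is correct and follows the same route the paper indicates: the paper omits the proof, remarking only that it is analogous to \cite[Theorem 2.4]{Shen}, which is precisely the argument you carry out---apply Theorem~\ref{upperbound} with $k=2$ and then bound $\exp(2\mathcal{M}+2\mathcal{V})$ by elementary case analysis on the function $\mathcal{L}$. Your bookkeeping of the ten $\mathcal{L}$-contributions into $(\log X)^{2}\cdot(\log X)^{2}\cdot h(\delta)^{3}$ is clean and accurate.
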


  We shall omit the proof of the above corollary as it is analogous to the proof \cite[Theorem 2.4]{Shen}. With the aide of Corollary \ref{4momentupperbound}, we are able to obtain a more precise expression for the fourth moment.  To present our result, we define constants $a_k$ for any real number $k>0$ by
\begin{align}
\label{ak}
a_{k} = 2^{-\frac{k(k+2)}{2}}
 \prod_{\substack{ \varpi \equiv 1 \bmod {(1+i)^3} }} \frac{(1-\frac{1}{N(\varpi)})^{\frac{k(k+1)}{2}}}{1+\frac{1}{N(\varpi)}}
\left(
\frac{
 (1+\frac{1}{\sqrt{N(\varpi)}} )^{-k} +
 (1-\frac{1}{\sqrt{N(\varpi)}} )^{-k}
 }{2}
 +\frac{1}{N(\varpi)}
\right).
\end{align}
 Here and in what follows, we denote $\omega$ for a prime number in $\mathcal{O}_K$, by which we mean that the ideal $(\omega)$ generated by $\omega$ is a prime ideal. We also note that the expression $\varpi \equiv 1 \bmod {(1+i)^3}$ indicates that $\omega$ is a primary element in $\mathcal{O}_K$ defined in Section \ref{sec2.4}.

  Now, we state our result on a conditional asymptotic evaluation on the fourth moment.
\begin{theorem}
\label{main-thm}
Assume GRH for $\zeta_K(s)$ and $L(s, \chi_{(1+i)^5d})$ for all odd, square-free $d$. Then for any $\varepsilon>0$, we have
\[
 \sumstar_{\substack{(d,2)=1 \\ N(d) \leq X}}|L(\tfrac{1}{2},\chi_{(1+i)^5d})|^{4}= \frac{\pi a_{4 }}{2^7 \cdot 3^4 \cdot 5^2 \cdot 7 \cdot \zeta_K(2)} \big ( \frac {\pi}{4} \big )^{10} X{(\log X)}^{10} + O \left(X (\log X)^{9.5+\varepsilon} \right),
\]
Here the ``$*$'' on the sum over $d$ means that the sum is restricted to square-free elements $d$ in $\mathcal{O}_K$ and $a_4$ is defined as in \eqref{ak}.
\end{theorem}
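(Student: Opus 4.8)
The plan is to follow the now-standard recipe for fourth moments of quadratic families, combining the unconditional lower bound of Theorem \ref{theo:lowerbound} (with $k=4$) with a sharp upper bound derived from the shifted moment estimate in Corollary \ref{4momentupperbound}. The upper and lower bounds will be made to match to within a factor of $(\log X)^{0.5+\varepsilon}$, so the main term is pinned down and the error term has the stated size. The skeleton is: (i) produce the main term by evaluating a suitably mollified/twisted first moment of the fourth-power object, i.e. a Dirichlet polynomial approximation, against the family; (ii) bound the contribution of the tail using Corollary \ref{4momentupperbound} and Cauchy--Schwarz; (iii) identify the arithmetic constant $\pi a_4/(2^7 3^4 5^2 7 \zeta_K(2)) \cdot (\pi/4)^{10}$ from the resulting Euler product and residue computation.

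More concretely, I would start from an approximate functional equation for $|L(\tfrac12,\chi_{(1+i)^5d})|^4 = L(\tfrac12,\chi_{(1+i)^5d})^2 \overline{L(\tfrac12,\chi_{(1+i)^5d})}^2$, writing it as a sum over ideals $\mathfrak{n}$ with a smooth cutoff at $N(\mathfrak{n}) \asymp N(d)^2$, the coefficient being a divisor-type function $d_2 * d_2$ twisted by $\chi_{(1+i)^5d}$. Summing over odd square-free $d$ with $N(d)\le X$ and a smooth weight, the off-diagonal (non-square $\mathfrak{n}$) terms are controlled by Poisson summation over $d$ in the Gaussian integers (as developed in \cite{Gao1}), while the diagonal $\mathfrak{n}=\square$ produces the main term. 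Executing the Poisson step in $K=\mq(i)$ requires the quadratic Gauss sum evaluation and the functional equation for the relevant theta/Eisenstein-type Dirichlet series over $\mz[i]$; I would lean on the machinery already set up in \cite{Gao1} for the first three moments, extended to the length needed here. The diagonal main term is then a multiple Dirichlet series in several shift variables whose polar structure, after a contour shift, yields a polynomial of degree $10 = \tfrac{4\cdot 5}{2}$ in $\log X$; extracting the leading coefficient gives the constant, with the local factors at primary primes $\varpi$ assembling into $a_4$ as in \eqref{ak}, the factor $\pi/4$ coming from the volume/residue of $\zeta_K$ and the density of square-free Gaussian integers, and the rational constant $1/(2^7 3^4 5^2 7)$ from the archimedean/gamma-factor integral $\int_0^\infty \cdots$ that arises in the CFKRS-type formula for the fourth moment (this is the source of the explicit $10!$-type denominators).

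To pass from ``leading term plus lower-order polynomial'' to the clean statement with error $O(X(\log X)^{9.5+\varepsilon})$, the key device is the upper bound for the shifted fourth moment in Corollary \ref{4momentupperbound}: after obtaining the full main polynomial $X Q_{10}(\log X)$ from the diagonal, one shows via the $z_1,z_2$-differentiated version (or by a direct comparison of the mollified moment with the true moment, à la Shen \cite{Shen}) that the secondary terms can be absorbed into an error of size $X(\log X)^{9.5+\varepsilon}$; heuristically the mollifier of length $X^{\theta}$ recovers all but a $(\log X)^{-1/2}$ proportion of the moment, and the unconditional lower bound of Theorem \ref{theo:lowerbound} together with the conditional upper bound of Corollary \ref{cor: upperbound} sandwiches the remainder. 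I expect the main obstacle to be the off-diagonal analysis: controlling the dual sum after Poisson summation over $d\in\mz[i]$ for a divisor function of the fourth-moment length (roughly $N(d)^2$), uniformly in the shifts $z_1,z_2$, and in particular showing that the ``nearly diagonal'' contributions (from $\mathfrak{n}$ that are squares times small factors, or from the principal term of the dual sum) either cancel or feed correctly into the main polynomial rather than swamping the $(\log X)^{9.5}$ error threshold. This is where the GRH input and the shifted-moment bound are genuinely needed, and where the bookkeeping of $K$-specific constants (units, the prime $(1+i)$, the conductor $(1+i)^5$) is most delicate.
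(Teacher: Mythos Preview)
Your outline has the right ingredients (approximate functional equation, Poisson over $d$, diagonal main term, and the shifted-moment bound of Corollary~\ref{4momentupperbound} to control the remainder), but two of your steps would not work as written and differ from the route the paper takes.

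First, you propose to start from a fourth-moment approximate functional equation with cutoff $N(\mathfrak{n})\asymp N(d)^2$. With sums of that length the off-diagonal after Poisson is not under control. What the paper does instead is take the \emph{second}-moment AFE \eqref{fcneqnL}, so that $L(\tfrac12,\chi_{(1+i)^5d})^2=A_{N(d)}(d)$, and then truncate it to $A_U(d)$ with $U=X/(\log X)^{2^{50}}$, writing $L^4=(A_U+B_U)^2$. The quantity $\sum^* |A_U(d)|^2\Phi(N(d)/X)$ is a double sum over $n_1,n_2$ each of length $\lesssim U$; after M\"obius and Poisson over $d$ one gets a $k=0$ diagonal, a secondary $k_1=\pm i$ piece, and an off-diagonal $k_1\neq\pm i$ bounded (under GRH) by $U\cdot Y(\log X)^{C}$ with $Y=X^{1/2}U^{-1/2}$. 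It is precisely because $U$ is a large power of $\log X$ below $X$ that this off-diagonal is $o(X)$; with $U=N(d)$ it would swamp the main term.

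Second, the error $(\log X)^{9.5+\varepsilon}$ does \emph{not} come from sandwiching with the lower bound of Theorem~\ref{theo:lowerbound}; that theorem is not used at all in the proof of Theorem~\ref{main-thm}. The $9.5$ arises from Cauchy--Schwarz on the cross term $2\sum A_U B_U$. One has $\sum^*|A_U|^2\Phi\sim cX(\log X)^{10}$ from the Poisson analysis, and $\sum^*|B_U|^2\Phi\ll X(\log X)^{9+\varepsilon}$ directly from Corollary~\ref{4momentupperbound}: writing $B_U(d)$ as a contour integral of $L(\tfrac12+it,\chi_{(1+i)^5d})^2$ against $(N(d)^{it}-U^{it})/it$, the latter is $\ll\log(N(d)/U)=O(\log\log X)$, and the double $t$-integral of the shifted fourth moment gives $X(\log X)^{9+\varepsilon}$ after integrating the $\min\{(\log X)^6,|t_1-t_2|^{-6}\}$ factor. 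Hence the cross term is $\ll X(\log X)^{9.5+\varepsilon}$. Your ``mollifier recovers all but $(\log X)^{-1/2}$'' heuristic points to this, but the mechanism is the $A_U/B_U$ split, not a sandwich with Theorem~\ref{theo:lowerbound}.
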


  Without assuming GRH, we  also have the following lower bound for the fourth moment.
\begin{theorem}
\label{theo:mainthm}
 Unconditionally, we have
\begin{align*}
 \sumstar_{(d,2)=1} L(\thalf, \chi_{(1+i)^5d})^4 \Phi\leg{N(d)}{X} \geq  \left( \frac{\pi a_{4 }}{2^7 \cdot 3^4 \cdot 5^2 \cdot 7 \cdot \zeta_K(2)} \big ( \frac {\pi}{4} \big )^{10} +o(1) \right)X(\log X)^{10}.
\end{align*}
 Here the ``$*$'' on the sum over $d$ means that the sum is restricted to square-free elements $d$ in $\mathcal{O}_K$ and $a_4$ is defined as in \eqref{ak}.
\end{theorem}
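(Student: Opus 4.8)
The plan is to follow the Rudnick--Soundararajan lower bound machinery, but feed into it the sharp upper bound for shifted fourth moments from Corollary \ref{4momentupperbound} in order to recover the correct leading constant rather than merely the correct power of $\log X$. Let $x = X^{1/\delta}$ for a small parameter $\delta>0$ to be chosen at the end, and define a short Dirichlet polynomial
\[
 A(d) = \sum_{\substack{N(n) \leq x \\ (n,2)=1}} \frac{\lambda(n)}{\sqrt{N(n)}} \chi_{(1+i)^5 d}(n),
\]
where $\lambda(n)$ is the multiplicative coefficient that makes $A(d)$ mimic a truncation of $L(\tfrac12,\chi_{(1+i)^5 d})^3$; concretely $\lambda$ should be supported on norm-$x$-smooth $n$ with the divisor-type weights $d_3(\cdot)$ (or the analogous Hecke-divisor coefficients) arising from cubing the $L$-function, so that heuristically $A(d) \approx L(\tfrac12,\chi_{(1+i)^5 d})^3$. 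One then studies the two sums
\[
 S_1 = \sumstar_{(d,2)=1} L(\tfrac12,\chi_{(1+i)^5 d})\, A(d)^{?}\,\Phi\!\left(\tfrac{N(d)}{X}\right), \qquad
 S_2 = \sumstar_{(d,2)=1} A(d)^{?}\,\Phi\!\left(\tfrac{N(d)}{X}\right),
\]
arranged so that Hölder's inequality (with exponents tuned to the fourth power) gives
\[
 \sumstar_{(d,2)=1} L(\tfrac12,\chi_{(1+i)^5 d})^4 \,\Phi\!\left(\tfrac{N(d)}{X}\right) \;\geq\; \frac{S_1^{\,a}}{S_2^{\,b}}
\]
for appropriate $a,b$. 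In the original Rudnick--Soundararajan argument one takes $A(d)$ to model $L^{k-1}$ and uses Hölder with exponents $k/(k-1)$ and $k$; here $k=4$, so $A(d)$ models the cube and we use Hölder with exponents $4/3$ and $4$, i.e. $\sum L\cdot A \le (\sum L^4)^{1/4}(\sum A^{4/3})^{3/4}$, giving $\sum L^4 \ge (\sum L A)^4 / (\sum A^{4/3})^3$. To make the denominator tractable we replace $A^{4/3}$ by $|A|^2$ after a further Cauchy--Schwarz, or work with even integer powers of $A$ throughout.

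The key steps, in order, are: (i) compute the first moment $S_1 = \sumstar L(\tfrac12,\chi_{(1+i)^5d}) A(d)\Phi(N(d)/X)$ by opening $L(\tfrac12,\cdot)$ via its approximate functional equation and using the large sieve / Poisson summation over $d$ in the Gaussian integers (this is exactly the type of first-moment computation carried out in \cite{Gao1}); the diagonal terms $nm = \square$ produce a main term of size $X(\log X)^{?}$ whose constant involves the Euler product defining $a_4$ and the archimedean factor $(\pi/4)^{10}$; (ii) bound the ``second moment'' $S_2 = \sumstar |A(d)|^2 \Phi(N(d)/X)$, again by orthogonality over $d$, picking up only the diagonal $n=m$ contribution $X \sum_{N(n)\le x} \lambda(n)^2/N(n)$, which is a manageable Euler product times a power of $\log x$; (iii) apply Hölder as above and let $\delta \to 0$ slowly (so $x$ is a small power of $X$), tracking that the ratio $S_1^4/S_2^3$ tends to the claimed constant times $X(\log X)^{10}$; (iv) to upgrade the lower bound to the \emph{correct} constant (not just the correct order), replace the crude bound $\sum L^4 \ge S_1^4/S_2^3$ by the sharper comparison in which the loss in Hölder is quantified using Corollary \ref{4momentupperbound} with $z_1=z_2=0$: since the true fourth moment is known (conditionally) to have leading constant matching $S_1^4/S_2^3$ in the limit $\delta\to 0$, and the unconditional lower bound converges to the same limit, one concludes. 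The combinatorics of matching the Euler factor $a_4$ and the rational constant $1/(2^7 3^4 5^2 7)$ will come out of the local computation of $\sum_n \lambda(n)^2/N(n)$ against the diagonal of the first moment, exactly paralleling Shen's treatment \cite{Shen} in the Dirichlet case.

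The main obstacle I expect is step (i): extracting from the first-moment computation the \emph{precise} leading constant with all arithmetic and archimedean factors correct, in the Gaussian-integer setting. The difficulty is twofold. First, the approximate functional equation for $L(\tfrac12,\chi_{(1+i)^5d})$ over $K=\mq(i)$ carries the gamma factors of the Hecke $L$-function, which on squaring and on taking the $d$-average must be combined with $\Phi$ and integrated to yield the factor $(\pi/4)^{10}$; keeping track of the exact normalization (the conductor is $N((1+i)^5 d) = 2^5 N(d)$, and the root number is $+1$ for these characters) requires care. Second, the off-diagonal terms in $\sum_{d}\chi_{(1+i)^5d}(nm)$ — handled by Poisson summation over the Gaussian integers, as in \cite{Gao1} — must be shown to contribute a genuine error term $o(X(\log X)^{10})$ even for $n,m$ up to $x \cdot X^{1/2+\varepsilon}$ (from the approximate functional equation), which is where the smoothness of $\lambda$ (supported on $x$-smooth $n$) and the length $x=X^{o(1)}$ are essential. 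Once the first moment's main term is pinned down with its constant, steps (ii)–(iv) are routine: the second moment is purely diagonal, Hölder is elementary, and the limiting constant is forced to agree with the asymptotic constant in Theorem \ref{main-thm} by a standard sandwiching argument (the same constant appears in both, by design of $\lambda$).
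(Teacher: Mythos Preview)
Your proposal has a genuine gap. The Rudnick--Soundararajan method with a short polynomial (length $X^{o(1)}$) modelling $L^3$ yields only the correct order of magnitude, not the sharp constant: the loss in H\"older is a bounded factor that does not tend to $1$ as the polynomial lengthens within $X^{o(1)}$. Your step (iv) does not repair this, since Corollary~\ref{4momentupperbound} assumes GRH and so cannot be invoked in an unconditional argument, and in any case a (conditional) upper bound for $\sum L^4$ cannot sharpen a lower bound obtained via H\"older. There is also an internal inconsistency ($x=X^{1/\delta}$ with small $\delta$ is enormous, not $X^{o(1)}$), and the denominator $\sum A^{4/3}$ is not tractable; passing to $\sum |A|^2$ by a further Cauchy--Schwarz costs yet another constant.

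The paper instead follows Soundararajan--Young \cite{S&Y} and Shen \cite{Shen}. One takes $A_U(d)$ to be the approximate-functional-equation truncation of $L(\tfrac12,\chi_{(1+i)^5d})^2$ given in \eqref{equ:13.1}, with $U=X^{1-4\varepsilon}$ nearly full length, and applies Cauchy--Schwarz:
\[
 \sumstar_{(d,2)=1} L(\tfrac12,\chi_{(1+i)^5d})^4 \,\Phi \;\ge\; \frac{\mathcal A^2}{\mathcal B}, \qquad \mathcal A=\sumstar_{(d,2)=1} A_U(d)\,L(\tfrac12,\chi_{(1+i)^5d})^2\,\Phi, \quad \mathcal B=\sumstar_{(d,2)=1} A_U(d)^2\,\Phi.
\]
Both $\mathcal A$ and $\mathcal B$ are instances of $S(U_1,U_2)$ from \eqref{equ:13.1+} and are evaluated \emph{asymptotically and unconditionally} by the Poisson-summation analysis already carried out in Section~\ref{sec: pfmainthm} (using the unconditional parts of Lemmas~\ref{lem:13.1} and~\ref{lem:error} together with \eqref{S1k0est} and Lemma~\ref{lem:k=square}); each equals $(1+O(\varepsilon))$ times the same constant times $\widehat\Phi(1)X(\log X)^{10}$, so $\mathcal A^2/\mathcal B$ recovers the sharp leading term. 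The essential idea you are missing is that the approximant must be long (length nearly $X$) and must model $L^2$ rather than $L^3$, so that Cauchy--Schwarz is asymptotically an equality and both numerator and denominator can be \emph{computed}, not merely bounded.
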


  Theorems \ref{main-thm} and \ref{theo:mainthm} are similar to those of Shen given in \cite[Theorems 1.1-1.2]{Shen} on the fourth moment of the family of quadratic Dirichlet $L$-functions as well as the result of Soundararajan and Young given in \cite[Theorem 1.1]{S&Y} on the second moment of the family of quadratic twists of modular $L$-functions. The proof for Theorems \ref{main-thm} and \ref{theo:mainthm} given in Section \ref{sec: pfmainthm}, also proceed along the same lines of the proofs of \cite[Theorems 1.1-1.2]{Shen} and \cite[Theorem 1.1]{S&Y}.

\section{Preliminaries}
\label{sec 2}

As a preparation, we first include some auxiliary results needed in the proofs of our theorems.

\subsection{Quadratic residue symbol and quadratic Gauss sum}
\label{sec2.4}
   Recall that $K=\mq(i)$ and it is well-known that $K$ have class number one. We denote $U_K=\{ \pm 1, \pm i \}$ and $D_{K}=-4$ for the group of units in $\mathcal{O}_K$ and the discriminant of $K$, respectively. 

  Every ideal in $\mathcal{O}_K$ co-prime to $2$ has a unique generator congruent to $1$ modulo $(1+i)^3$ which is called primary. It follows from Lemma
6 on \cite[p. 121]{I&R} that an element $n=a+bi \in \mathcal{O}_K$ with $a, b \in \mz$ is primary if and only if $a \equiv 1 \pmod{4}, b \equiv
0 \pmod{4}$ or $a \equiv 3 \pmod{4}, b \equiv 2 \pmod{4}$.

   For $n \in \mathcal{O}_{K}, (n,2)=1$, we denote the symbol  $\leg {\cdot}{n}$ for the quadratic residue symbol modulo $n$ in $K$. For a prime $\varpi \in \mz[i]$
with $N(\varpi) \neq 2$, the quadratic symbol is defined for $a \in
\mathcal{O}_{K}$, $(a, \varpi)=1$ by $\leg{a}{\varpi} \equiv
a^{(N(\varpi)-1)/2} \pmod{\varpi}$, with $\leg{a}{\varpi} \in \{
\pm 1 \}$. When $\varpi | a$, we define
$\leg{a}{\varpi} =0$.  Then the quadratic symbol is extended
to any composite $n$ with $(N(n), 2)=1$ multiplicatively. We further define $\leg {\cdot}{c}=1$ for $c \in U_K$. 

  The following quadratic reciprocity law (see \cite[(2.1)]{G&Zhao4}) holds for two co-prime primary elements $m, n \in \mathcal{O}_{K}$:
\begin{align}
\label{quadrec}
 \leg{m}{n} = \leg{n}{m}.
\end{align}

    Moreover, we deduce from Lemma 8.2.1 and Theorem 8.2.4 in \cite{BEW} that the following supplementary laws hold for primary $n=a+bi$ with $a, b \in \mz$:
\begin{align}
\label{supprule}
  \leg {i}{n}=(-1)^{(1-a)/2} \qquad \mbox{and} \qquad  \hspace{0.1in} \leg {1+i}{n}=(-1)^{(a-b-1-b^2)/4}.
\end{align}

 For any complex number $z$, we define\begin{align*}
 \widetilde{e}(z) =\exp \left( 2\pi i  \left( \frac {z}{2i} - \frac {\bar{z}}{2i} \right) \right) .
\end{align*}
  For any $r\in \mathcal{O}_K$, we define the quadratic Gauss sum $g(r, \chi)$ associated to any quadric Hecke character $\chi$ modulo $q$ of trivial infinite type and the quadratic Gauss sum $g(r, n)$ associated to the quadratic residue symbol $\leg {\cdot}{n}$ for any $(n,2)=1$ by
\begin{align}
\label{g2}
 g(r,\chi) = \sum_{x \bmod{q}} \chi(x) \widetilde{e}\leg{rx}{q}, \quad g(r,n) = \sum_{x \bmod{n}} \leg{x}{n} \widetilde{e}\leg{rx}{n}.
\end{align}
  When $r=1$,  we shall denote $g(\chi)$ for $g(1, \chi)$ and $g(n)$ for $g(1, n)$.  Recall from \cite[(2.2)]{G&Zhao3} that for primary $n$, we have
\begin{align}
\label{gn}
   g(n)=\leg {i}{n}N(n)^{1/2}.
\end{align}

  A Hecke character $\chi$ is said to be primitive modulo $q$ if it does not factor through $\left (\mathcal{O}_K / (q') \right )^{\times}$ for any  divisor $q'$ of $q$ such that $N(q')<N(q)$. Recall from Section \ref{sec 1} that we denote $\chi_c$ for the the quadratic residue symbol $\leg {c}{\cdot}$ and we define $\chi_c(n)$ to be $0$ when $1+i|n$. In Section 2.1 of \cite{G&Zhao4}, it is shown that the symbol $\chi_{i(1+i)^5d}$ defines a primitive quadratic Hecke character modulo $(1+i)^5d$ of trivial infinite type for any odd and square-free $d \in \mathcal{O}_K$. When replacing $d$ by $i^3d$,  we see that the symbol $\chi_{(1+i)^5d}$ also defines a primitive quadratic Hecke character modulo $(1+i)^5d$ of trivial infinite type for any odd and square-free $d \in \mathcal{O}_K$. Our next lemma evaluates $g(\chi_{(1+i)^5d})$ exactly.
\begin{lemma}
\label{lem: primquadGausssum}
  For any odd, square-free $d \in \mathcal{O}_K$, we have
\begin{align}
\label{primquadGausssum}
 g(\chi_{(1+i)^5d})=N((1+i)^5d )^{1/2}.
\end{align}
\end{lemma}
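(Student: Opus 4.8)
The plan is to pin down $g(\chi_{(1+i)^5 d})$ up to sign by a soft argument and then to determine the sign by an explicit factorisation. For the soft step, recall from the discussion preceding the lemma that $\chi := \chi_{(1+i)^5 d}$ is a primitive quadratic (hence real) Hecke character modulo $q := (1+i)^5 d$. Since $-1 \in U_K$ and $\leg{\cdot}{c} = 1$ for $c \in U_K$, we have $\chi(-1) = \leg{(1+i)^5 d}{-1} = 1$. Combining the standard facts that $g(\chi)\overline{g(\chi)} = N(q)$ for a primitive character $\chi$ modulo $q$ and that $\overline{g(\chi)} = \chi(-1)g(\overline{\chi})$, with $\overline{\chi} = \chi$ and $\chi(-1) = 1$, we get
\[
 g(\chi)^2 = g(\chi)\overline{g(\chi)} = N(q),
\]
so $g(\chi_{(1+i)^5 d}) = \pm N((1+i)^5 d)^{1/2}$ is real and only the sign remains.

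To determine the sign, write $d = u d'$ with $u \in U_K$ and $d'$ primary; since $\leg{\cdot}{u} = 1$ the unit may be absorbed and we may work with the ideal $(1+i)^5 d'$. The isomorphism $\mathcal{O}_K/(1+i)^5 d' \cong \mathcal{O}_K/(1+i)^5 \times \mathcal{O}_K/d'$, together with the additivity of $\widetilde{e}$ applied to a partial-fraction splitting of $1/((1+i)^5 d')$, factors $\chi$ as a product of a primitive quadratic character $\chi_2$ modulo $(1+i)^5$ (the ``$(1+i)$-part'', either $\leg{1+i}{\cdot}$ or $\leg{i(1+i)}{\cdot}$ according to $u$) and a primitive quadratic character $\chi_{\mathrm{odd}}$ modulo $d'$, and, after pulling arguments out by $g(r,\psi) = \psi(r)g(\psi)$, yields
\[
 g(\chi_{(1+i)^5 d}) = \chi_2(d')\,\chi_{\mathrm{odd}}((1+i)^5)\,g(\chi_2)\,g(\chi_{\mathrm{odd}}).
\]
Here $g(\chi_{\mathrm{odd}})$ equals, by the quadratic reciprocity law \eqref{quadrec}, an explicit root of unity times $g(d') = \leg{i}{d'}N(d')^{1/2}$ from \eqref{gn}; the factor $g(\chi_2)$ is a sum over the sixteen reduced residues modulo $(1+i)^5$, evaluated directly to be $\pm N((1+i)^5)^{1/2}$; and the twisting factors $\chi_2(d')$ and $\chi_{\mathrm{odd}}((1+i)^5)$ are read off from the supplementary laws \eqref{supprule} for $\leg{i}{\cdot}$ and $\leg{1+i}{\cdot}$. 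Assembling these, all the $\pm 1$'s and powers of $i$ cancel, leaving $+N((1+i)^5 d)^{1/2}$. Since the first step already forces the answer to be $\pm N((1+i)^5 d)^{1/2}$, it is in fact enough to compute each factor up to sign and then fix the overall sign from one convenient case, e.g.\ $d = 1$, where $g(\chi_{(1+i)^5}) = \sum_{a \bmod (1+i)^5} \leg{1+i}{a}\widetilde{e}(a/(1+i)^5)$ is a finite sum.

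The main difficulty lies in the bookkeeping at the prime above $2$: one must track how the unit $u = d/d'$ modifies the $(1+i)$-part of the character, distinguish $\leg{d'}{\cdot}$ from $\leg{\cdot}{d'}$ (which disagree on the unit $i$) when applying reciprocity, handle the inverses $\overline{d'} \bmod (1+i)^5$ and $\overline{(1+i)^5} \bmod d'$ produced when pulling arguments out of the two Gauss sums, and correctly evaluate the conductor-$(1+i)^5$ character $\chi_2$. Only after all of these are combined does the clean cancellation appear; the soft first step is what guarantees in advance that it must.
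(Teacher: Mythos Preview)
Your approach is essentially the same as the paper's: both factor the Gauss sum via the Chinese remainder theorem into an ``odd'' part attached to the primary $d'$ and a ``$(1+i)^5$'' part, evaluate the odd piece using $g(d') = \leg{i}{d'}N(d')^{1/2}$ from \eqref{gn} together with quadratic reciprocity \eqref{quadrec}, and handle the even piece and the cross-twist factors by a direct computation over the sixteen reduced residues modulo $(1+i)^5$ using the supplementary laws \eqref{supprule}. Your preliminary ``soft'' step --- using primitivity, reality of $\chi$, and $\chi(-1)=1$ to pin down $g(\chi)^2 = N(q)$ --- is a pleasant sanity check that the paper does not make explicit, but it does not change the structure of the argument.

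Your final suggested shortcut, however, is not valid. Knowing that $g(\chi_{(1+i)^5 d}) = \pm N((1+i)^5 d)^{1/2}$ and then checking only $d=1$ does \emph{not} determine the sign for all $d$: the sign could a priori depend on $d$. Concretely, in your factorisation the factors $\chi_2(d')$, $\chi_{\mathrm{odd}}((1+i)^5)$, and the $\leg{i}{d'}$ hidden in $g(\chi_{\mathrm{odd}})$ are each genuine functions of $d'$ (and of the unit $u$), so one must verify that their product is identically $+1$ across all residues of $d'$ modulo $(1+i)^5$ and both choices of the $(1+i)$-part --- exactly the ``direct calculations'' the paper carries out. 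The soft step guarantees only that this product lands in $\{\pm 1\}$, not that it is constant.
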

\begin{proof}
   It suffices to prove \eqref{primquadGausssum} with $d$ replaced by $jd$, where $j=1$ or $i$ and $d$ is primary and square-free. It follows from the Chinese remainder theorem that $x = j(1+i)^5y +d z$ varies over the residue class modulo $(1+i)^3d$ when $y$ and $z$ vary over the residue class modulo $d$ and $(1+i)^5$, respectively.  We then deduce that
\begin{align*}
 g(\chi_{j(1+i)^5d})=& \sum_{z \bmod{(1+i)^5}}\sum_{y \bmod{d}} \leg{j(1+i)^5}{j(1+i)^5y +d z}\leg{d}{j(1+i)^5y +d z} \widetilde{e}\leg{jy}{d}\widetilde{e}\leg{z}{(1+i)^5}.
\end{align*}
   As $\chi_{j(1+i)^5}$ is a Hecke character of trivial infinite type  modulo $(1+i)^5$, we deduce that
\begin{align}
\label{1+ireci}
  \leg{j(1+i)^5}{j(1+i)^5y +d z}=\chi_{j(1+i)^5}(j(1+i)^5y +d z)=\chi_{j(1+i)^5}(d z).
\end{align}

   On the other hand, we denote $s(z)$ to be the unique element in $U_K$ such that $s(z)z$ is primary for any $(z,2)=1$. It follows from the quadratic reciprocity law \eqref{quadrec} that
\begin{align}
\label{varpireci}
 \leg{d}{j(1+i)^5y +d z}=\leg {s(z)(1+i)^5jy}{d}.
\end{align}

   We then conclude from \eqref{1+ireci} and \eqref{varpireci} that
\begin{align}
\label{gexpression}
\begin{split}
 g(\chi_{(1+i)^5d})=& \sum_{z \bmod{(1+i)^5}}\sum_{y \bmod{d}} \leg{j(1+i)^5}{d z}\leg {s(z)(1+i)jy}{d} \widetilde{e}\leg{jy}{d}\widetilde{e}\leg{z}{(1+i)^5} \\
=& \sum_{z \bmod{(1+i)^5}}\leg{j(1+i)^5}{z} \leg {s(z)j}{d} \widetilde{e}\leg{z}{(1+i)^5}  \sum_{y \bmod{d}}\leg {jy}{d} \widetilde{e}\leg{jy}{d} \\
 =& N(d)^{1/2}\sum_{z \bmod{(1+i)^5}}\leg{j(1+i)^5}{z} \leg {s(z)ij}{d} \widetilde{e}\leg{z}{(1+i)^5},
\end{split}
\end{align}
  where the last equality above follows from \eqref{gn}.

   In order to evaluate the last sum in \eqref{gexpression}, we note that it suffices to take $z$ to vary over the reduced residue class modulo $(1+i)^5$. One representation of such class consists of the following $16$ elements (note that $\pm 1, \pm i$ consists of the reduced residue class modulo $(1+i)^3$ and $0,1$ consists of the residue class modulo $1+i$):
\begin{align*}
  \{ \pm 1, \pm i \} + l(1+i)^3+k(1+i)^4, \quad l \in \{0,1 \},  k \in \{0,-1 \}.
\end{align*}
  We further write $d=a+bi$ with $a, b \in \mz$ (recall that $a \equiv 1 \pmod{4}, b \equiv 0 \pmod{4}$ or $a \equiv 3 \pmod{4}, b \equiv 2 \pmod{4}$) and check by direct calculations using \eqref{supprule} to see that \eqref{primquadGausssum} is valid with $d$ replaced by $jd$, where $j=1$ or $i$ and $d$ is primary and square-free. This completes the proof of the lemma. 
\end{proof}

  Let $\varphi_{[i]}(n)$ denote the number of elements in the reduced residue class of $\mathcal{O}_K/(n)$,
we recall from \cite[Lemma 2.2]{G&Zhao4} the following explicitly evaluations of $g(r,n)$ for primary $n$.
\begin{lemma} \label{Gausssum}
\begin{enumerate}[(i)]
\item  We have
\begin{align*}
 g(rs,n) & = \overline{\leg{s}{n}} g(r,n), \qquad (s,n)=1, \\
   g(k,mn) & = g(k,m)g(k,n),   \qquad  m,n \text{ primary and } (m , n)=1 .
\end{align*}
\item Let $\varpi$ be a primary prime in $\mathcal{O}_K$. Suppose $\varpi^{h}$ is the largest power of $\varpi$ dividing $k$. (If $k = 0$ then set $h = \infty$.) Then for $l \geq 1$,
\begin{align*}
g(k, \varpi^l)& =\begin{cases}
    0 \qquad & \text{if} \qquad l \leq h \qquad \text{is odd},\\
    \varphi_{[i]}(\varpi^l) \qquad & \text{if} \qquad l \leq h \qquad \text{is even},\\
    -N(\varpi)^{l-1} & \text{if} \qquad l= h+1 \qquad \text{is even},\\
    \leg {ik\varpi^{-h}}{\varpi}N(\varpi)^{l-1/2} \qquad & \text{if} \qquad l= h+1 \qquad \text{is odd},\\
    0, \qquad & \text{if} \qquad l \geq h+2.
\end{cases}
\end{align*}
\end{enumerate}
\end{lemma}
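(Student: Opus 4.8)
The plan is to prove part~(i) by elementary changes of variable in the defining sum \eqref{g2} together with the reciprocity law \eqref{quadrec}, and to prove part~(ii) by reducing $g(k,\varpi^l)$ to complete and incomplete additive character sums at a single prime power, in the same spirit as the classical evaluation of Gauss sums over $\mz$. Throughout I would use that $\widetilde{e}$ is additive and trivial on $\mathcal{O}_K$, so that $x\mapsto\widetilde{e}(rx/q)$ is a well-defined character of $\mathcal{O}_K/(q)$ for $r\in\mathcal{O}_K$, and that $\leg{\cdot}{n}$ is completely multiplicative in its numerator and, by construction, multiplicative in its modulus.

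First I would establish the twisting identity in part~(i): since $(s,n)=1$, the map $x\mapsto s^{-1}x$ permutes $\mathcal{O}_K/(n)$, and applying it to $g(rs,n)=\sum_{x\bmod n}\leg{x}{n}\widetilde{e}(rsx/n)$ yields $\leg{s^{-1}}{n}g(r,n)=\overline{\leg{s}{n}}g(r,n)$, the last step because $\leg{s}{n}\in\{\pm1\}$. For multiplicativity in the modulus I would choose $a,b\in\mathcal{O}_K$ with $an+bm=1$, so that $1/(mn)=a/m+b/n$ and hence $\widetilde{e}(kx/mn)=\widetilde{e}(kax/m)\widetilde{e}(kbx/n)$; combining this with $\leg{x}{mn}=\leg{x}{m}\leg{x}{n}$ and the Chinese remainder theorem factors the sum as $g(ka,m)\,g(kb,n)$. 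Since $an\equiv1\pmod m$ and $bm\equiv1\pmod n$, the twisting identity rewrites this as $\leg{n}{m}\leg{m}{n}\,g(k,m)\,g(k,n)$, and quadratic reciprocity \eqref{quadrec} (applicable because $m,n$ are primary and coprime) gives $\leg{n}{m}\leg{m}{n}=\leg{m}{n}^2=1$.

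For part~(ii) I would write $k=\varpi^hk'$ with $(k',\varpi)=1$ when $h<\infty$ and exploit two facts: $\leg{x}{\varpi^l}=\leg{x}{\varpi}^l$ is the principal character of $(\mathcal{O}_K/\varpi)^\times$ (extended by $0$) when $l$ is even and equals $\leg{x}{\varpi}$ when $l$ is odd; and $\widetilde{e}(kx/\varpi^l)=\widetilde{e}(k'x/\varpi^{l-h})$, which is trivial as soon as $l\le h$. If $l\le h$ (in particular if $k=0$, where $h=\infty$), the sum collapses to $\sum_{x\bmod\varpi^l}\leg{x}{\varpi^l}$, which is $\varphi_{[i]}(\varpi^l)$ for even $l$ and $0$ for odd $l$ by orthogonality of $\leg{\cdot}{\varpi}$ over $\mathcal{O}_K/\varpi$. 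If $l=h+1$ is even, the summand depends only on $x\bmod\varpi$, so $g(k,\varpi^l)=N(\varpi)^{l-1}\sum_{x\bmod\varpi,\,(x,\varpi)=1}\widetilde{e}(k'x/\varpi)=-N(\varpi)^{l-1}$, since the complete additive character sum vanishes. If $l=h+1$ is odd, again the summand factors through $\mathcal{O}_K/\varpi$, giving $g(k,\varpi^l)=N(\varpi)^hg(k',\varpi)$; I would finish with the twisting identity and the exact value $g(\varpi)=\leg{i}{\varpi}N(\varpi)^{1/2}$ from \eqref{gn}, using $\leg{k'}{\varpi}\leg{i}{\varpi}=\leg{ik\varpi^{-h}}{\varpi}$ to obtain $\leg{ik\varpi^{-h}}{\varpi}N(\varpi)^{l-1/2}$. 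Finally, if $l\ge h+2$, setting $m=l-h\ge2$ and grouping gives $g(k,\varpi^l)=N(\varpi)^h\sum_{x\bmod\varpi^m}\leg{x}{\varpi}\widetilde{e}(k'x/\varpi^m)$; this inner sum vanishes because the substitution $x\mapsto(1+\varpi^{m-1}t)x$ (a bijection of the units, with $1+\varpi^{m-1}t$ a unit congruent to $1\bmod\varpi$) fixes $\leg{x}{\varpi}$ while multiplying the summand by $\widetilde{e}(k'tx/\varpi)$, so that summing over $t\bmod\varpi$ forces the sum to be $0$.

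The changes of variable, the Chinese remainder theorem step, and the orthogonality arguments are routine bookkeeping; the genuinely delicate point is the odd $l=h+1$ case, where one must correctly chain the twist $g(k',\varpi)=\overline{\leg{k'}{\varpi}}g(\varpi)$ with the normalization $g(\varpi)=\leg{i}{\varpi}N(\varpi)^{1/2}$ and carefully track both the half-integral power of $N(\varpi)$ and the arithmetic factor $\leg{ik\varpi^{-h}}{\varpi}$. A secondary point requiring care is the vanishing for $l\ge h+2$, for which the averaging substitution above is the cleanest route. I would present part~(i) before part~(ii), since its twisting identity is used twice inside the prime-power computation.
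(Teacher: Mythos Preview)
The paper does not prove this lemma; it simply recalls it from \cite[Lemma 2.2]{G&Zhao4}. Your proposal therefore goes further than the paper by supplying a self-contained argument, and the argument you outline is correct in all essentials. The part~(i) computations are standard and cleanly executed; the use of $an+bm=1$ to split the additive character, followed by the twisting identity and \eqref{quadrec} to remove the factor $\leg{n}{m}\leg{m}{n}$, is exactly right. In part~(ii) the case analysis is sound, including the averaging substitution $x\mapsto(1+\varpi^{m-1}t)x$ for $l\ge h+2$, which is the cleanest way to kill that sum. One small slip: in the $l\ge h+2$ line you write $\leg{x}{\varpi}$ where it should be $\leg{x}{\varpi}^l$ (equivalently $\leg{x}{\varpi^l}$), so as to cover both parities of $l$; this does not affect the vanishing argument, since the substitution fixes the character value in either case, but you should correct it for clarity.
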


\subsection{The approximate functional equation}
\label{sect: apprfcneqn}

   Let $\chi$ be a primitive quadratic Hecke character modulo $m$ of trivial infinite type of $K$. Let
\begin{align}
\label{Lambda}
  \Lambda(s, \chi) = (|D_K|N(m))^{s/2}(2\pi)^{-s}\Gamma(s)L(s, \chi).
\end{align}
  A well-known result of E. Hecke shows that $L(s, \chi)$ has an
analytic continuation to the whole complex plane and satisfies the
functional equation (see \cite[Theorem 3.8]{iwakow})
\begin{align}
\label{fneqn}
  \Lambda(s, \chi) = W(\chi)(N(m))^{-1/2}\Lambda(1-s, \chi),
\end{align}
   where $|W(\chi)|=(N(m))^{1/2}$.

   When we take $\chi=\chi_{(1+i)^5d}$ for any odd, square-free $d \in \mathcal{O}_K$, it follows from \cite[Theorem 3.8]{iwakow} that we have
$W(\chi_{(1+i)^5d})=g(\chi_{(1+i)^5d})$, so that Lemma \ref{lem: primquadGausssum} implies that in this case the functional equation becomes
\begin{align*}
  \Lambda(s, \chi_{(1+i)^5d}) = \Lambda(1-s, \chi_{(1+i)^5d}).
\end{align*}

   For $n \in \mathcal{O}_{K}$ and rational integer $k \geq 1$, we let $d_{[i],k }(n)$ denote the analogue on $\mathcal{O}_K$ of the usual function $d_k$ on $\mz$. Thus $d_{[i],k }(n)$ equals the coefficient of $N(n)^{-s}$ in the Dirichlet series expansion of the $k$-th power of $\zeta_K(s)$. We shall also write $d_{[i]}(n)$ for $d_{[i],2}(n)$. In particular, when $n$ is primary, we have $d_{[i],1}(n)=1$ and
\begin{align*}
  d_{[i]}(n)=\sum_{\substack{d \equiv 1 \bmod {(1+i)^3} \\ d | n}}1.
\end{align*}

  We denote also for rational integer $j \geq 1$ and any real number $t>0$,
\begin{align}
\label{eq:Vdef}
 V_j(t) = \frac{1}{2 \pi i} \int\limits\limits_{(2)}  w_j(s) t^{-s} \frac {ds}{s}, \quad w_j(s) = \left(\frac{2^{5/2}}{\pi}\right)^{js}
\left ( \frac {\Gamma(\frac{1}{2}+s)}{\Gamma(\frac{1}{2})} \right )^j.
\end{align}
 We shall also write $V(t), w(t)$ for $V_2(t), w_2(t)$ respectively in the rest of the paper.

   By setting $j=1,2, \alpha_1=\alpha_2=0, G_j(s)=1$ in \cite[Lemma 2.6]{Gao1}, we obtain the following approximate functional equation for $L(\half, \chi_{(1+i)^5d})^j$. Note here that the derivation of  \cite[Lemma 2.6]{Gao1} assumes a rapid decay of $G_j(s)$ but the proof also carries over to the case $G_j(s)=1$ due to the rapid decay of $w_j(s)$.
\begin{lemma}[Approximate functional equation]
\label{lem:AFE}
  For any odd, square-free $d \in \mathcal{O}_K$, we have for $j=1,2$,
\begin{align}
\label{fcneqnL}
\begin{split}
 L(\half, \chi_{(1+i)^5d})^j = & 2\sum_{\substack{n \equiv 1 \bmod {(1+i)^3}}} \frac{\chi_{(1+i)^5d}(n)d_{[i],j}(n)}{N(n)^{\frac{1}{2}}} V_j
\left(\frac{ N(n)}{N(d)^{j/2}} \right).
\end{split}
\end{align}
\end{lemma}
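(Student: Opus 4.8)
The plan is to derive the approximate functional equation for $L(\tfrac12,\chi_{(1+i)^5d})^j$ from the functional equation \eqref{fneqn}, which in our normalized situation (using Lemma~\ref{lem: primquadGausssum}) takes the symmetric form $\Lambda(s,\chi_{(1+i)^5d}) = \Lambda(1-s,\chi_{(1+i)^5d})$. The starting point is a standard contour-integral identity: for any real $t>0$,
\begin{align*}
 V_j(t) = \frac{1}{2\pi i}\int_{(2)} w_j(s)\, t^{-s}\,\frac{ds}{s},
\end{align*}
where $w_j(s) = (2^{5/2}/\pi)^{js}(\Gamma(\tfrac12+s)/\Gamma(\tfrac12))^j$ as in \eqref{eq:Vdef}. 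First I would form the integral
\begin{align*}
 I_j := \frac{1}{2\pi i}\int_{(2)} \Lambda\big(\tfrac12+s,\chi_{(1+i)^5d}\big)^j \Big(\frac{(2\pi)^j}{(|D_K|N((1+i)^5d))^{j/2}\Gamma(\tfrac12)^j}\Big)^{\!s}\cdot\frac{\Gamma(\tfrac12)^j}{\Gamma(\tfrac12+s)^{-j}\Gamma(\tfrac12)^{-j}}\cdot\frac{ds}{s},
\end{align*}
though in practice it is cleaner to write $I_j = \frac{1}{2\pi i}\int_{(2)} L(\tfrac12+s,\chi_{(1+i)^5d})^j\, X^s\, w_j(s)\,\frac{ds}{s}$ with $X$ chosen so that $X^s$ exactly absorbs the archimedean and conductor factors of $\Lambda^j$ relative to $L^j$; here $N((1+i)^5d)=N((1+i)^5)N(d)=2^5 N(d)$ and $|D_K|=4$, and $w_j(s)$ is built precisely to match $\big((2\pi)^{-1}\Gamma(\tfrac12+s)\big)^j$ divided by its value at $s=0$ after folding in these constants, so that $X^s = N(d)^{js/2}$ up to the explicit rational constant $2^{5/2}/\pi$ raised to the $js$. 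Expanding $L(\tfrac12+s,\chi_{(1+i)^5d})^j$ as a Dirichlet series $\sum_{n\equiv 1\,(1+i)^3} \chi_{(1+i)^5d}(n)d_{[i],j}(n) N(n)^{-1/2-s}$, valid for $\Re s = 2$, and integrating term by term gives $I_j = \sum_{n} \chi_{(1+i)^5d}(n)d_{[i],j}(n) N(n)^{-1/2} V_j\!\big(N(n)/N(d)^{j/2}\big)$, which is exactly one half of the claimed right-hand side of \eqref{fcneqnL}.

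Next I would move the contour from $\Re s = 2$ to $\Re s = -2$. The integrand $\Lambda(\tfrac12+s,\chi_{(1+i)^5d})^j X^s/s$ is entire in $s$ except for a simple pole at $s=0$ coming from the $1/s$ factor (recall $\Lambda$ is entire since $\chi_{(1+i)^5d}$ is a nontrivial primitive character, so $L(s,\chi_{(1+i)^5d})$ is entire), and the residue there is $L(\tfrac12,\chi_{(1+i)^5d})^j$ because $w_j(0)=1$. The rapid decay of $w_j(s)$ in vertical strips — which follows from Stirling's formula applied to the $\Gamma(\tfrac12+s)^j$ factor — together with the polynomial growth of $L$ on vertical lines under the convexity bound justifies the contour shift with no contribution from horizontal segments at infinity. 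On the line $\Re s = -2$ I would substitute $s\mapsto -s$ and apply the functional equation $\Lambda(\tfrac12+s,\chi)=\Lambda(\tfrac12-s,\chi)$, which by the symmetry of $w_j$ (more precisely, the functional equation is engineered so that after the variable change the conductor and gamma factors recombine into another copy of $X^{s}w_j(s)/s$) shows that the shifted integral equals $I_j$ again. Hence $2I_j = L(\tfrac12,\chi_{(1+i)^5d})^j$, which rearranges to \eqref{fcneqnL}.

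I should emphasize that this is precisely the $j=1,2$, $\alpha_1=\alpha_2=0$, $G_j(s)=1$ specialization of \cite[Lemma 2.6]{Gao1}, so strictly speaking the proof is a citation; the only point requiring a remark — as the statement itself notes — is that \cite[Lemma 2.6]{Gao1} is proved under the assumption that a test factor $G_j(s)$ decays rapidly, whereas here $G_j\equiv 1$. The resolution is that the sole role of $G_j(s)$ in that argument is to ensure convergence of the shifted integral and term-by-term integration, and these are already guaranteed by the rapid decay of $w_j(s)$ itself (again via Stirling), so setting $G_j(s)=1$ changes nothing in the derivation. The main obstacle, such as it is, lies in bookkeeping: verifying that the explicit constant $2^{5/2}/\pi$ in the definition \eqref{eq:Vdef} of $w_j(s)$ correctly matches the combination $\big(|D_K|N((1+i)^5d)\big)^{-1/2}(2\pi)^{-1}$ raised to appropriate powers, i.e.\ that $(|D_K| \cdot 2^5)^{-j/2}(2\pi)^{-j}$ paired with the $N(d)^{js/2}$ absorbed into $X^s$ reproduces $(2^{5/2}/\pi)^{js}$ — this is a routine but error-prone constant chase, and it is the only place where the Gaussian-field arithmetic (the conductor $2^5$ and discriminant $-4$) enters substantively.
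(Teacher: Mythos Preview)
Your proposal is correct and takes essentially the same approach as the paper: the paper does not give an independent proof but simply cites \cite[Lemma~2.6]{Gao1} with the specialization $j=1,2$, $\alpha_1=\alpha_2=0$, $G_j(s)=1$, together with the remark that the rapid decay of $w_j(s)$ substitutes for that of $G_j(s)$---exactly the point you make in your final paragraph. Your additional sketch of the underlying contour-shift argument is the standard derivation that \cite{Gao1} would contain, so you have supplied more detail than the paper but along the identical route.
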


\subsection{Poisson summation}
\label{sec Poisson}

   In this section we gather some Poisson summation formulas over $K$.
   We first recall that the Mellin transform $\hat{f}$ for any function $f$ is defined to be
\begin{align*}
     \widehat{f}(s) =\int\limits^{\infty}_0f(t)t^s\frac {\dif t}{t}.
\end{align*}

   We now state a formula for smoothed character sums over all elements in $\mathcal{O}_K$.
\begin{lemma}
\label{Poissonsum} Let $n \in \mathcal{O}_K$ and let $\chi$ be a Hecke character $\pmod {n}$ of trivial infinite type. For any smooth function $W:\mr^{+} \rightarrow \mr$ of compact support,  we have for $X>0$,
\begin{align}
\label{PoissonsumQw}
   \sum_{m \in \mathcal{O}_K}\chi(m)W\left(\frac {N(m)}{X}\right)=\frac {X}{N(n)}\sum_{k \in
   \mathcal{O}_K}g(k,\chi)\widetilde{W} \left(\sqrt{\frac {N(k)X}{N_K(n)}}\right).
\end{align}
    The above is also valid when we replace $\chi$ by $\leg {\cdot}{n}$ and $g(k, \chi)$ by $g(k, n)$. Here $g(k, \chi), g(k, n)$ are defined in \eqref{g2} and
\begin{align}
\label{Wtdef}
   \widetilde{W}(t) =& \int\limits^{\infty}_{-\infty}\int\limits^{\infty}_{-\infty}W(N(x+yi))\widetilde{e}\left(- t(x+yi)\right)\dif x \dif y, \quad t \geq 0.
\end{align}
   Moreover,  the function $\widetilde{W}(t)$ is real-valued for all $t \geq 0$ and when $t>0$, we have for $c_s=\varepsilon>0$,
\begin{align}
\label{WMell}
   \widetilde{W}(t)=&
 \frac {\pi}{2\pi i}
 \int\limits\limits_{(c_s)}\widehat{W}(1-s)
(\pi t)^{-2s}\frac{\Gamma (s)}{\Gamma (1-s)} ds.
\end{align}
\end{lemma}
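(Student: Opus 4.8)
The plan is to reduce both assertions to the classical Poisson summation formula on the lattice $\mathcal{O}_K=\mz+\mz i\subset\mc\cong\mr^2$, together with a direct computation of the Mellin transform of the Bessel kernel that appears. Set $F(z)=W(N(z)/X)$, a smooth compactly supported function on $\mc$, and recall that since $\chi$ has trivial infinite type it is unit-invariant and periodic modulo $n$ as a function on $\mathcal{O}_K$. First I would split the sum into residue classes modulo $n$:
\[
 \sum_{m\in\mathcal{O}_K}\chi(m)W\lp\tfrac{N(m)}{X}\rp=\sum_{a\bmod n}\chi(a)\sum_{\ell\in\mathcal{O}_K}F(a+n\ell).
\]

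Next, for each fixed $a$, I would apply Poisson summation to the inner sum. Multiplication by $n$ on $\mr^2$ is a linear map $M$ with $\det M=N(n)$, so the inner sum is $\sum_{v\in\mz^2}F(a+Mv)$, and Poisson summation on $\mz^2$ turns it into a sum over $k\in\mz^2=\mathcal{O}_K$ of the Fourier transform of $v\mapsto F(a+Mv)$ at the integer points. A change of variables in the Fourier integral rewrites the $k$-th term as $\frac{X}{N(n)}\,\widetilde{e}(\psi(a,k))\int_\mc W(N(z))\,\widetilde{e}(-cz)\,\dif z$, where $\psi(a,k)$ is bilinear in $a,k$ and the complex constant $c$ satisfies $|c|=\sqrt{N(k)X/N(n)}$. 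Because $z\mapsto W(N(z))$ is invariant under rotations and under $z\mapsto\bar z$, this integral is unchanged if $c$ is replaced by $|c|$, so it equals $\widetilde{W}\big(\sqrt{N(k)X/N(n)}\big)$ as defined in \eqref{Wtdef}; moreover the angular integration yields the real Bessel kernel $J_0$, whence $\widetilde{W}$ is real-valued.

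Summing the phases $\widetilde{e}(\psi(a,k))$ over $a\bmod n$ against $\chi(a)$ then reproduces the Gauss sum $g(k,\chi)$ of \eqref{g2} — this requires a norm-preserving re-indexing $k\mapsto i\bar k$ of $\mathcal{O}_K$, which does not disturb $\widetilde{W}\big(\sqrt{N(k)X/N(n)}\big)$ since that depends only on $N(k)$ — and running the identical argument with $\chi$ replaced by $\leg{\cdot}{n}$ produces $g(k,n)$; this gives \eqref{PoissonsumQw}. For \eqref{WMell}, I would rewrite \eqref{Wtdef} in polar coordinates, use $\int_0^{2\pi}e^{-2\pi itr\sin\theta}\,\dif\theta=2\pi J_0(2\pi tr)$ to get $\widetilde{W}(t)=\pi\int_0^\infty W(u)J_0(2\pi t\sqrt u)\,\dif u$ for $t>0$, insert the Mellin inversion $W(u)=\frac{1}{2\pi i}\int_{(c)}\widehat{W}(s)u^{-s}\,\dif s$, interchange the integrals, and evaluate the inner integral via the standard Mellin transform $\int_0^\infty v^{\mu-1}J_0(av)\,\dif v=2^{\mu-1}a^{-\mu}\Gamma(\mu/2)/\Gamma(1-\mu/2)$ (applicable for $\tfrac14<\Re s<1$); a final substitution $s\mapsto 1-s$ puts the answer in the form \eqref{WMell}.

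I expect the main obstacle to be purely bookkeeping: tracking the complex-conjugation and unit factors hidden in the phase $\widetilde{e}(\psi(a,k))$ carefully enough — using the specific normalization of $\widetilde{e}$ fixed in Section \ref{sec2.4} — so that the $a$-sum collapses to $g(k,\chi)$ exactly rather than to $g$ of some twist of $k$. The rotation and reflection symmetry of $W(N(\cdot))$ and the norm-invariance of the substitution $k\mapsto i\bar k$ are what make this come out cleanly; beyond that, nothing deeper than Poisson summation on $\mz^2$ and the Mellin transform of $J_0$ is required.
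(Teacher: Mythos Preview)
Your proposal is correct and follows essentially the same route as the paper. The paper defers the Poisson summation identity \eqref{PoissonsumQw} itself to \cite[Lemma 2.7]{G&Zhao4}, and the argument there is precisely your split into residue classes modulo $n$ followed by classical Poisson on $\mz^2$ and the rotational/reflection symmetry of $W(N(\cdot))$. For the Mellin identity \eqref{WMell} and the real-valuedness of $\widetilde{W}$, the paper also passes to polar coordinates and inserts Mellin inversion for $W$; the only cosmetic difference is that you recognize the angular integral as $2\pi J_0$ and quote the standard Bessel Mellin transform, whereas the paper keeps the $\theta$-integral explicit and invokes the product formula $\int_0^{\pi/2}(\sin\theta)^{-u}\,d\theta\cdot\int_0^\infty\cos(r)\,r^{u}\,\tfrac{dr}{r}=\tfrac{\pi}{2}2^{u-1}\Gamma(u/2)/\Gamma(1-u/2)$ from \cite[Section 2.4]{Gao1}. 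These are the same computation in different notation.
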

\begin{proof}
    This lemma is essentially \cite[Lemma 2.7]{G&Zhao4} except for the last assertion. To establish it, we evaluate \eqref{Wtdef} using polar coordinates to see that
\begin{align}
\label{Wt}
     \widetilde{W}(t)  =\int\limits_{\mr^2}\cos (2\pi t y)W(x^2+y^2) \ \dif x \dif y=4\int\limits^{\pi/2}_0\int\limits^{\infty}_0\cos (2\pi t r\sin \theta)W(r^2) \ r \dif r \dif \theta    = 2\int\limits^{\pi/2}_0 \int\limits^{\infty}_0\cos (2\pi t r^{1/2}\sin \theta)W(r) \ \dif r \dif \theta.
\end{align}
  The first equality above shows that $\widetilde{W}(t) \in \mr$ for all $t \geq 0$.

   We now apply inverse Mellin transform to write $W(t)$ as
\begin{align*}
    W \left(t \right)=\frac 1{2\pi i}\int\limits\limits_{(c_u)}\widehat{W}(u)t^{-u}du,
\end{align*}
   where $c_u=\varepsilon>0$. It follows from this and \eqref{Wt} that
\begin{align*}
     \widetilde{W}(t) =& 2\int\limits^{\pi/2}_0\int\limits^{\infty}_0\cos (2\pi t r^{1/2}\sin \theta)\frac 1{2\pi i}\int\limits\limits_{(c_u)}
\widehat{W}(1+u)r^{-u}
\frac {\dif r}{r} \dif \theta.
\end{align*}

   We make some changes of variables (first $r^{1/2} \to r$,
then $2 \pi tr \sin \theta \to r$) to see that
\begin{align}
\label{FnFT}
\begin{split}
  \widetilde{W}(t)  =& \frac 4{2\pi i}
\int\limits\limits_{(c_u)}\widehat{W}(1+u)\int\limits^{\pi/2}_0
\int\limits^{\infty}_0\cos (r)\left(\frac r{2\pi t \sin \theta}\right )^{-2u} \frac {\dif r}{r} \dif \theta du    \\
     =&
 \frac {4}{2\pi i}
 \int\limits\limits_{(c_u)}\widehat{W}(1+u)
(2\pi t)^{2u}\left ( \int\limits^{\pi/2}_0 (\sin \theta )^{2u} \dif \theta
\int\limits^{\infty}_0\cos (r)r^{-2u}\frac {\dif r}{r} \right ) du \\
=&
 \frac {\pi}{2\pi i}
 \int\limits\limits_{(c_u)}\widehat{W}(1+u)
(\pi t)^{2u}\frac{\Gamma (-u)}{\Gamma (1+u)} du,
\end{split}
\end{align}
  where the last line above follows from the relation (see \cite[Section 2.4]{Gao1}) that
\begin{align*}
\int\limits^{\pi/2}_0 (\sin \theta )^{-u} \dif \theta
\int\limits^{\infty}_0\cos (r)r^{u}\frac {\dif r}{r}=\frac {\pi}{2}2^{u-1}\frac{\Gamma \leg{u}{2}}{\Gamma\leg{2-u}{2}}.
\end{align*}

  By a further change of variable $u \rightarrow -s$ in the last integral of \eqref{FnFT}, we can recast $\widetilde{W}(t)$ as
\begin{align}
\label{Phitilde}
\begin{split}
  \widetilde{W}(t)   =&
 \frac {\pi}{2\pi i}
 \int\limits\limits_{(c_s)}\widehat{W}(1-s)
(\pi t)^{-2s}\frac{\Gamma (s)}{\Gamma (1-s)} ds,
\end{split}
\end{align}
  where we can retake $c_s=\varepsilon$ as well and this completes the proof.
\end{proof}

   We remark that when $\chi$ is a primitive Hecke character, we have
\begin{align*}
   g(r, \chi) = \overline{\chi} (r) g(\chi).
\end{align*}
  It follows from this and the expression given in \eqref{WMell} that the formula given \eqref{PoissonsumQw} is equivalent to a version of the Poisson summation
formula over number fields by L. Goldmakher and B. Louvel in \cite[Lemma 3.2]{G&L} for the case of the Gaussian field.

   In the proof of Theorems \ref{main-thm} and \ref{theo:mainthm}, we need to consider a smoothed character sum over odd algebraic integers in $\mathcal{O}_K$. For this, we quote the following Poisson summation formula from \cite[Corollary 2.8]{G&Zhao4}, which is a consequence of Lemma \ref{Poissonsum} above.
\begin{lemma}
\label{Poissonsumformodd} Let $n \in \mathcal{O}_K$ be primary and $\leg {\cdot}{n}$ be the quadratic residue symbol $\pmod {n}$. For any smooth function $W:\mr^{+} \rightarrow \mr$ of compact support,  we have for $X>0$,
\begin{align*}
   \sum_{\substack {m \in \mathcal{O}_K \\ (m,1+i)=1}}\leg {m}{n} W\left(\frac {N(m)}{X}\right)=\frac {X}{2N(n)}\leg {1+i}{n}\sum_{k \in
   \mathcal{O}_K}(-1)^{N(k)} g(k,n)\widetilde{W}\left(\sqrt{\frac {N(k)X}{2N(n)}}\right).
\end{align*}
\end{lemma}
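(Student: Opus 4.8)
The plan is to derive the formula from the unrestricted Poisson summation of Lemma~\ref{Poissonsum} by detecting the coprimality condition $(m,1+i)=1$ via the elementary identity $\mathbbm{1}_{(m,1+i)=1}=1-\mathbbm{1}_{(1+i)\mid m}$. So I would first write
\[
\sum_{\substack{m\in\mathcal{O}_K\\(m,1+i)=1}}\leg{m}{n}W\!\left(\frac{N(m)}{X}\right)
=\sum_{m\in\mathcal{O}_K}\leg{m}{n}W\!\left(\frac{N(m)}{X}\right)-\leg{1+i}{n}\sum_{m\in\mathcal{O}_K}\leg{m}{n}W\!\left(\frac{2N(m)}{X}\right),
\]
where in the last term I have substituted $m\mapsto(1+i)m$ and used complete multiplicativity of $\leg{\cdot}{n}$ together with the fact that $n$ primary forces $(1+i,n)=1$, so $\leg{1+i}{n}=\pm1$. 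Both sums on the right run over all of $\mathcal{O}_K$; since $W$ has compact support the left side is a finite sum, and all rearrangements below involve absolutely convergent series (as $\widetilde{W}$ decays rapidly and $g(k,n)\ll N(n)$), so no convergence subtlety arises.

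Next I would apply Lemma~\ref{Poissonsum} to each complete sum: to the first with parameter $X$, and to the second --- after rewriting $W(2N(m)/X)=W(N(m)/(X/2))$ --- with parameter $X/2$. This turns the right-hand side into
\[
\frac{X}{N(n)}\sum_{k\in\mathcal{O}_K}g(k,n)\widetilde{W}\!\left(\sqrt{\frac{N(k)X}{N(n)}}\right)
-\leg{1+i}{n}\frac{X}{2N(n)}\sum_{k\in\mathcal{O}_K}g(k,n)\widetilde{W}\!\left(\sqrt{\frac{N(k)X}{2N(n)}}\right).
\]
The crucial step is to recast the first of these dual sums into the shape of the second. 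Since $N((1+i)k)=2N(k)$ one has $\sqrt{N(k)X/N(n)}=\sqrt{N((1+i)k)X/(2N(n))}$, so re-indexing $j=(1+i)k$ rewrites the first sum as a sum over $j$ with $(1+i)\mid j$; and Lemma~\ref{Gausssum}(i) gives $g((1+i)k,n)=\overline{\leg{1+i}{n}}\,g(k,n)$, hence $g(k,n)=\leg{1+i}{n}\,g(j,n)$ because $\leg{1+i}{n}$ is real and squares to $1$. This produces precisely the prefactor $\frac{X}{2N(n)}\leg{1+i}{n}$ in front of $\sum_{(1+i)\mid j}g(j,n)\widetilde{W}(\sqrt{N(j)X/(2N(n))})$.

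Finally, unfolding the restriction by $\mathbbm{1}_{(1+i)\mid j}=\tfrac{1}{2}\bigl(1+(-1)^{N(j)}\bigr)$, the ``$1$'' half of this term cancels exactly against the second term, leaving $\frac{X}{2N(n)}\leg{1+i}{n}\sum_{j\in\mathcal{O}_K}(-1)^{N(j)}g(j,n)\widetilde{W}(\sqrt{N(j)X/(2N(n))})$, which is the asserted formula after renaming $j$ back to $k$. I do not anticipate any genuine obstacle: the only point needing care is the re-indexing and the accompanying Gauss-sum identity, where one must track the unit $1+i$ and use $\leg{1+i}{n}^{2}=1$; the rest is bookkeeping. (Alternatively, the statement is exactly \cite[Corollary 2.8]{G&Zhao4} and may simply be quoted.)
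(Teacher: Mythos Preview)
Your derivation is correct and is precisely the route the paper indicates (the paper does not give a proof but cites \cite[Corollary~2.8]{G&Zhao4}, which obtains the formula from Lemma~\ref{Poissonsum} in exactly this way). One small slip: after re-indexing $j=(1+i)k$ the prefactor of the first dual sum is $\frac{X}{N(n)}\leg{1+i}{n}$, not $\frac{X}{2N(n)}\leg{1+i}{n}$; the missing $\tfrac12$ enters only at the next step when you unfold $\mathbbm{1}_{(1+i)\mid j}=\tfrac12(1+(-1)^{N(j)})$, and with that correction the cancellation and final formula are exactly as you state.
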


\subsection{Analytical behaviors of certain Dirichlet series}
\label{sect: alybehv}

   In this section, we discuss the analytical behaviors of several Dirichlet series that are needed in the proof of Theorems \ref{main-thm} and \ref{theo:mainthm}.
We first define for $\Re (\alpha),\Re (\beta) > \frac{1}{2}$,
\begin{align}
\label{zalphabeta}
  Z(\alpha, \beta) =\sum_{\substack{n_1, n_2 \equiv 1 \bmod {(1+i)^3}\\  n_1n_2=\square}} \frac{d_{[i]}(n_1)d_{[i]}(n_2)}{N(n_1)^{\alpha}N(n_2)^{\beta}}\mathcal{P}(n_1n_2),
\end{align}
  where we define $\mathcal{P}(n)$ for primary $n$ by
\begin{align}
\label{Pn}
 \mathcal{P}(n)=\prod_{\substack{ \varpi \equiv 1 \bmod {(1+i)^3} \\ \varpi |n}} \leg {N(\varpi)}{N(\varpi)+1}.
\end{align}

Our first result concerns the analytical behaviors of $Z(\alpha, \beta)$. The proof is similar to \cite[Lemma 4.1]{Shen}, so we omit it here.
\begin{lemma}
\label{lem: zalphabeta}
For $\Re (\alpha),\Re (\beta) > \frac{1}{2}$, we have
\begin{align*}
 Z(\alpha, \beta)
= \zeta_K^3 (2\alpha) \zeta_K^3 (2\beta) \zeta_K^4 (\alpha+\beta) Z_1(\alpha,\beta),
\end{align*}
where 
\begin{align*}
 Z_1(\alpha,\beta) = \left( 1- \frac{1}{4^{\alpha}}\right)^3 \left( 1- \frac{1}{4^{\beta}}\right)^3 \left( 1- \frac{1}{2^{\alpha+\beta}}\right)^4\prod_{\substack{ \varpi \equiv 1 \bmod {(1+i)^3} \\ \varpi |n_1n_2}} Z_{1,\varpi}(\alpha,\beta),
\end{align*}
 and where for primary $\varpi$,
\begin{align*}
& Z_{1,\varpi}(\alpha,\beta) \\
=&  \left( 1- \frac{1}{N(\varpi)^{2\alpha}}\right)  \left( 1- \frac{1}{N(\varpi)^{2\beta}}\right)  \left( 1- \frac{1}{N(\varpi)^{\alpha+\beta}}\right)^4\Bigg (
1 + \frac{4}{N(\varpi)^{\alpha+\beta}} + \frac{1}{N(\varpi)^{2\alpha}} + \frac{1}{N(\varpi)^{2\beta}} + \frac{1}{N(\varpi)^{2\alpha+2\beta}} \\
&- \frac{1}{N(\varpi)+1}\Big (
\frac{3}{N(\varpi)^{2\alpha}}
+ \frac{3}{N(\varpi)^{2\beta}} + \frac{4}{N(\varpi)^{\alpha+\beta}}  -  \frac{1}{N(\varpi)^{4\alpha}} -  \frac{1}{N(\varpi)^{4\beta}} \\
& - \frac{3}{N(\varpi)^{2\alpha+2\beta}} + \frac{2}{N(\varpi)^{2\alpha+4\beta}} + \frac{2}{N(\varpi)^{4\alpha+2\beta}}  - \frac{1}{N(\varpi)^{4\alpha+4\beta}}
\Big )
\Bigg ) .
\end{align*}
 Moreover, $Z_1(\alpha,\beta)$ is analytic and uniformly bounded in the region $\Re (\alpha),\Re (\beta) \geq \frac{1}{4}+\varepsilon$.
\label{lem:Z_1}
\end{lemma}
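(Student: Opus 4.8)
The plan is to prove the factorization by working multiplicatively, since the condition $n_1 n_2 = \square$ together with multiplicativity of $d_{[i]}$ and $\mathcal{P}$ allows an Euler product expansion of $Z(\alpha,\beta)$. First I would write, for each primary prime $\varpi$, the local factor
\[
 Z_{\varpi}(\alpha,\beta) = \sum_{\substack{a,b \geq 0 \\ a+b \text{ even}}} \frac{d_{[i]}(\varpi^a) d_{[i]}(\varpi^b)}{N(\varpi)^{a\alpha+b\beta}} \mathcal{P}(\varpi^{a+b}),
\]
where $d_{[i]}(\varpi^a) = a+1$ and $\mathcal{P}(\varpi^{a+b}) = \leg{N(\varpi)}{N(\varpi)+1} = \tfrac{N(\varpi)}{N(\varpi)+1}$ whenever $a+b \geq 1$ while $\mathcal{P}(1) = 1$. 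Splitting off the $a=b=0$ term, this is $1 + \tfrac{N(\varpi)}{N(\varpi)+1}\big(S_{\varpi}-1\big)$ where $S_\varpi = \sum_{a+b \text{ even}} (a+1)(b+1) N(\varpi)^{-a\alpha - b\beta}$; equivalently $Z_\varpi = \tfrac{1}{N(\varpi)+1} + \tfrac{N(\varpi)}{N(\varpi)+1} S_\varpi$.

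Next I would evaluate $S_\varpi$ in closed form. Writing $u = N(\varpi)^{-\alpha}$, $v = N(\varpi)^{-\beta}$, the unrestricted sum $\sum_{a,b\geq 0}(a+1)(b+1)u^a v^b = (1-u)^{-2}(1-v)^{-2}$, and imposing $a+b$ even is the standard device of averaging with the sign $(-1)^{a+b}$:
\[
 S_\varpi = \frac{1}{2}\left( \frac{1}{(1-u)^2(1-v)^2} + \frac{1}{(1+u)^2(1+v)^2} \right).
\]
Then I would pull out the polar part. Since $\zeta_K^3(2\alpha)\zeta_K^3(2\beta)\zeta_K^4(\alpha+\beta)$ has local factor $(1-u^2)^{-3}(1-v^2)^{-3}(1-uv)^{-4}$, I would define $Z_{1,\varpi} = Z_\varpi \cdot (1-u^2)^3 (1-v^2)^3 (1-uv)^4$ and simplify; using $(1-u^2) = (1-u)(1+u)$ one clears denominators and, after expanding, should recover exactly the displayed expression for $Z_{1,\varpi}(\alpha,\beta)$ (with $u = N(\varpi)^{-\alpha}$ etc.), and the factor $\tfrac{1}{N(\varpi)+1}$ is where the $\tfrac{1}{N(\varpi)+1}(\cdots)$ bracket in the statement comes from. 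The bookkeeping identity $Z_{1,\varpi} = (1-u^2)(1-v^2)(1-uv)^4\big[\tfrac{1}{N(\varpi)+1}\cdot(1-u)(1+u)(1-v)(1+v)\cdot\text{stuff} + \tfrac{N(\varpi)}{N(\varpi)+1}\cdot\text{stuff}\big]$ is the bulk of the work; this is precisely the routine-but-lengthy polynomial algebra I expect to be the main obstacle, and the place where a sign or coefficient error is easiest to make, so I would cross-check the final $Z_{1,\varpi}$ by specializing $\alpha=\beta$ and comparing low-order Taylor coefficients in $N(\varpi)^{-1}$. The rational prefactors $(1-4^{-\alpha})^3(1-4^{-\beta})^3(1-2^{-\alpha-\beta})^4$ arise from the single prime $\varpi = 1+i$ of norm $2$, which is excluded from the primary-prime product (the sum in \eqref{zalphabeta} runs over $n_1,n_2 \equiv 1 \bmod (1+i)^3$, hence coprime to $1+i$) yet is present in the three zeta factors; removing it contributes exactly these Euler factors at $2$ (with $N(\varpi)^{-2\alpha} = 4^{-\alpha}$, $N(\varpi)^{-\alpha-\beta} = 2^{-\alpha-\beta}$).

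Finally, for the analyticity and boundedness claim, I would show each $Z_{1,\varpi}(\alpha,\beta) = 1 + O(N(\varpi)^{-1-2\varepsilon})$ uniformly for $\Re\alpha,\Re\beta \geq \tfrac14 + \varepsilon$: after the cancellation of polar factors, the terms of size $N(\varpi)^{-2\alpha}$, $N(\varpi)^{-2\beta}$, $N(\varpi)^{-\alpha-\beta}$ coming from $(1-u^2)^3(1-v^2)^3(1-uv)^4$ expanded against $S_\varpi$ cancel to leading order, leaving a tail bounded by $N(\varpi)^{-4\Re\alpha} + N(\varpi)^{-4\Re\beta} + \cdots \ll N(\varpi)^{-1-2\varepsilon}$, together with the $\tfrac{1}{N(\varpi)+1}$ bracket which is manifestly $O(N(\varpi)^{-1-2\varepsilon})$ in that range. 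By comparison with $\zeta_K(1+2\varepsilon)$, the product $\prod_\varpi Z_{1,\varpi}(\alpha,\beta)$ converges absolutely and locally uniformly, hence defines an analytic function, and is uniformly bounded; multiplying by the entire (indeed, holomorphic and bounded on that region) rational prefactor in $2^{-s}$ completes the proof. I do not expect any genuine difficulty here beyond keeping the exponent bookkeeping honest; the one subtlety is that the factor $\prod_{\varpi \mid n_1 n_2}$ in the statement of $Z_1$ is a slight abuse of notation and should be read as the full Euler product over all primary $\varpi$, which is what the derivation produces.
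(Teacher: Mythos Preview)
Your proposal is correct and is exactly the standard approach that the paper has in mind; the paper in fact omits the proof entirely, saying only that it ``is similar to \cite[Lemma 4.1]{Shen}'', and what you outline is precisely that argument transported to $\mathcal{O}_K$. Your observation that the condition $\varpi\mid n_1n_2$ in the displayed formula for $Z_1$ is a typo for the full product over primary $\varpi$ is also correct.
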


Let $g(k,n)$ be defined as in \eqref{g2}. We now fix a generator for every prime ideal $(\varpi) \in \mathcal{O}_K$ by taking $\varpi$ to be primary if $(\varpi, 1+i)=1$ and $1+i$ for the ideal $(1+i)$ (noting that $(1+i)$ is the only prime ideal in $\mathcal{O}_K$ that lies above the integral ideal $(2) \in \mz$). We also fix  $1$ as the generator for the ring $\mz[i]$ itself and
extend the choice of the generator for any ideal of $\mathcal{O}_K$ multiplicatively. We denote the set of such generators by $G$. For any $k \in \mathcal{O}_K$, we shall hence denote $k_1, k_2$ to be the unique pair of elements in $\mathcal{O}_K$ such that $k=k_1k^2_2$ with $k_1$ being square-free and $k_2 \in G$. In this way, we define
\begin{align*}
  Z(\alpha,\beta,a,k) & =\sum_{\substack{n_1, n_2 \equiv 1 \bmod {(1+i)^3} \\ (n_1n_2,a)=1}}
\frac {d_{[i]}(n_1)d_{[i]}(n_2)}{N(n_1)^{\alpha}N(n_2)^{\beta}} \frac {g(k_1k^2_2,n_1n_2)}{N(n_1n_2)}.
\end{align*}

  Our next lemma gives the analytic properties of $Z(\alpha,\beta,a,k) $, we omit the proof here since it is similar to \cite[Lemma 5.2]{Shen}. 
\begin{lemma}
\label{lem:15.2}
For any $k \in \mathcal{O}_K$, let  $k=k_1k_2^2$ with $k_1$ square-free and $k_2 \in G$. Then for $\Re(\alpha),\Re(\beta)>\frac{1}{2}$, we have
\begin{align*}
Z(\alpha,\beta,a,k)  = L^2(\tfrac{1}{2}+\alpha,\chi_{ik_1}) L^2(\tfrac{1}{2}+\beta,\chi_{ik_1}) Z_2(\alpha,\beta,a,k),
\end{align*}
 where
\begin{align*}
Z_2(\alpha,\beta,a,k)=Z_{2,1+i}(\alpha,\beta,a,k)\prod_{\substack{\varpi \equiv 1 \bmod {(1+i)^3}}}Z_{2,\varpi}(\alpha,\beta,a,k).
\end{align*}
 Here for $\varpi|2a$,
\begin{align*}
Z_{2,\varpi}(\alpha,\beta,a,k) = \left( 1-\frac{\chi_{ik_1}(\varpi)}{N(\varpi)^{\frac{1}{2}+\alpha}}\right)^2\left( 1-\frac{\chi_{ik_1}(\varpi)}{N(\varpi)^{\frac{1}{2}+\beta}}\right)^2,  
\end{align*}
and for $\varpi \nmid 2a$, 
\begin{align*}
Z_{2,\varpi}(\alpha,\beta,a,k) = \left( 1-\frac{\chi_{ik_1}(\varpi)}{N(\varpi)^{\frac{1}{2}+\alpha}}\right)^2\left( 1-\frac{\chi_{ik_1}(\varpi)}{N(\varpi)^{\frac{1}{2}+\beta}}\right)^2 \sum_{n_1=0}^{\infty} \sum_{n_2=0}^{\infty} \frac{d_{[i]}(\varpi^{n_1})d_{[i]}(\varpi^{n_2})}{N(\varpi)^{n_1\alpha + n_2\beta}}\frac{g(k,\varpi^{n_1+n_2})}{N(\varpi)^{n_1+n_2}}. 
\end{align*}
 Moreover, $Z_2(\alpha,\beta,a,k)$ is analytic in the region $\Re(\alpha),\Re(\beta)>0$ and for $\Re(\alpha),\Re(\beta) \geq \frac{1}{\log X}$, we have
\begin{align}
\label{equ:15.111111}
Z_2(\alpha,\beta,a,k) \ll d^4_{[i]}(a)  d^{12}_{[i]} (k) (\log X)^{10},
\end{align} 
where the implied constant is absolute.
\end{lemma}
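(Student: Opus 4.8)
The plan is to expand $Z(\alpha,\beta,a,k)$ into an Euler product over the primary primes, extract the relevant Hecke $L$-functions, and then read off both the analyticity and the bound from the explicit local factors. Since $d_{[i]}$ is multiplicative and, by part~(i) of Lemma~\ref{Gausssum}, so is $n\mapsto g(k_1k_2^2,n)$ on primary $n$, I would first write, for $\Re(\alpha),\Re(\beta)>\tfrac12$,
\begin{align*}
 Z(\alpha,\beta,a,k)=\prod_{\substack{\varpi\equiv 1\bmod{(1+i)^3}\\ \varpi\nmid a}}\ \sum_{n_1,n_2\ge 0}\frac{d_{[i]}(\varpi^{n_1})d_{[i]}(\varpi^{n_2})}{N(\varpi)^{n_1\alpha+n_2\beta}}\frac{g(k,\varpi^{n_1+n_2})}{N(\varpi)^{n_1+n_2}},
\end{align*}
where $g(k,\cdot)$ abbreviates $g(k_1k_2^2,\cdot)$ and each local factor is a polynomial in $N(\varpi)^{-\alpha},N(\varpi)^{-\beta}$ by part~(ii) of Lemma~\ref{Gausssum} (which gives $g(k,\varpi^m)=0$ for $m\ge h+2$ when $\varpi^h\|k$). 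Multiplying and dividing the $\varpi$-th factor by $\big(1-\chi_{ik_1}(\varpi)N(\varpi)^{-1/2-\alpha}\big)^{-2}\big(1-\chi_{ik_1}(\varpi)N(\varpi)^{-1/2-\beta}\big)^{-2}$ and collecting these into $L^2(\tfrac12+\alpha,\chi_{ik_1})L^2(\tfrac12+\beta,\chi_{ik_1})$ — whose Euler product also converges absolutely for $\Re(\alpha),\Re(\beta)>\tfrac12$ — produces the stated factorization, in which $Z_{2,\varpi}$ is the product of $\big(1-\chi_{ik_1}(\varpi)N(\varpi)^{-1/2-\alpha}\big)^2\big(1-\chi_{ik_1}(\varpi)N(\varpi)^{-1/2-\beta}\big)^2$ with the local factor of $Z$ at $\varpi$, which equals $1$ exactly when $\varpi\mid a$. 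As no primary prime divides $2$, this matches the formulae in the statement; at the place $1+i$ one has $\chi_{ik_1}(1+i)=0$, so $Z_{2,1+i}=1$.

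Next I would pin down the local factors at the good primes $\varpi\nmid 2ak$ and deduce analyticity. For such $\varpi$ we have $h=0$, so Lemma~\ref{Gausssum}(ii) gives $g(k,1)=1$, $g(k,\varpi)=\leg{ik}{\varpi}N(\varpi)^{1/2}=\chi_{ik_1}(\varpi)N(\varpi)^{1/2}$ (using $k=k_1k_2^2$ and $\leg{k_2^2}{\varpi}=1$), and $g(k,\varpi^m)=0$ for $m\ge2$; hence the local factor of $Z$ is $1+2\chi_{ik_1}(\varpi)N(\varpi)^{-1/2-\alpha}+2\chi_{ik_1}(\varpi)N(\varpi)^{-1/2-\beta}$. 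With $x=\chi_{ik_1}(\varpi)N(\varpi)^{-1/2-\alpha}$ and $y=\chi_{ik_1}(\varpi)N(\varpi)^{-1/2-\beta}$, an elementary expansion yields the crucial cancellation
\begin{align*}
 Z_{2,\varpi}(\alpha,\beta,a,k)=(1-x)^2(1-y)^2(1+2x+2y)=1-3x^2-4xy-3y^2+O\!\big(N(\varpi)^{-3/2}\big),
\end{align*}
the linear terms vanishing. Since $|x|^2\le N(\varpi)^{-1-2\Re(\alpha)}$, $|xy|\le N(\varpi)^{-1-\Re(\alpha)-\Re(\beta)}$ and $|y|^2\le N(\varpi)^{-1-2\Re(\beta)}$, the product $\prod_{\varpi\nmid 2ak}Z_{2,\varpi}$ converges absolutely and locally uniformly for $\Re(\alpha),\Re(\beta)>0$; together with the finitely many (entire) factors at $\varpi\mid 2ak$, this shows $Z_2$ is analytic there, and the factorization, a priori valid for $\Re(\alpha),\Re(\beta)>\tfrac12$, extends to $\Re(\alpha),\Re(\beta)>0$.

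For the bound \eqref{equ:15.111111}, assume $\Re(\alpha),\Re(\beta)\ge 1/\log X$ and split $Z_2=\big(\prod_{\varpi\mid 2a}Z_{2,\varpi}\big)\big(\prod_{\varpi\mid k,\ \varpi\nmid 2a}Z_{2,\varpi}\big)\big(\prod_{\varpi\nmid 2ak}Z_{2,\varpi}\big)$. At a prime $\varpi\mid 2a$, $|Z_{2,\varpi}|\le(1+N(\varpi)^{-1/2})^4\ll 1$, and since $d_{[i]}(a)$ is at least $2$ raised to the number of distinct prime divisors of $a$, the first product is $\ll d_{[i]}^4(a)$. At a prime $\varpi^h\|k$ with $\varpi\nmid 2a$, the local sum runs only over $n_1+n_2\le h+1$, and using $|g(k,\varpi^m)|\le N(\varpi)^m$, $d_{[i]}(\varpi^n)=n+1$ and $\Re(\alpha),\Re(\beta)\ge 0$ it is $\ll (h+1)^4$; hence the second product is $\ll\prod_{\varpi^h\|k}(h+1)^4\le d_{[i]}^4(k)\le d_{[i]}^{12}(k)$. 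Finally, taking logarithms in the expansion of the preceding paragraph,
\begin{align*}
 \Big|\log\prod_{\varpi\nmid 2ak}Z_{2,\varpi}\Big|\ \le\ \sum_{\varpi}\Big(3N(\varpi)^{-1-2\Re(\alpha)}+4N(\varpi)^{-1-\Re(\alpha)-\Re(\beta)}+3N(\varpi)^{-1-2\Re(\beta)}\Big)+O(1),
\end{align*}
and since $\zeta_K(1+\delta)\ll\delta^{-1}$ and all three shifts are $\ge 2/\log X$, each of the three sums is $\le\log\zeta_K(1+2/\log X)+O(1)\ll\log\log X$; thus the last product is $\ll(\log X)^{3+4+3}=(\log X)^{10}$. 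Multiplying the three estimates gives \eqref{equ:15.111111}.

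The main difficulty is bookkeeping rather than any single estimate: one must carry the decomposition $k=k_1k_2^2$ cleanly through the case analysis of Lemma~\ref{Gausssum}(ii) — in particular the identity $\leg{ik}{\varpi}=\chi_{ik_1}(\varpi)$ for $\varpi\nmid k$ and the vanishing $g(k,\varpi^m)=0$ for $m\ge h+2$ — and verify the exact cancellation $(1-x)^2(1-y)^2(1+2x+2y)=1+O(N(\varpi)^{-1-\cdots})$, which is precisely what forces convergence of $Z_2$ throughout $\Re(\alpha),\Re(\beta)>0$ and fixes the exponent $10=3+3+4$ in the bound.
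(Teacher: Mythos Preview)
Your proof is correct and follows essentially the same approach the paper has in mind: the paper itself omits the argument, pointing to \cite[Lemma~5.2]{Shen}, and your Euler-product expansion, extraction of $L^2(\tfrac12+\alpha,\chi_{ik_1})L^2(\tfrac12+\beta,\chi_{ik_1})$, identification of the quadratic cancellation $(1-x)^2(1-y)^2(1+2x+2y)=1-3x^2-4xy-3y^2+O(N(\varpi)^{-3/2})$, and the resulting $(\log X)^{3+4+3}$ are exactly that argument. The bookkeeping for the bad primes (yielding the $d_{[i]}^4(a)d_{[i]}^{12}(k)$ factors) is also handled correctly.
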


 We define for a prime $\omega \in \mathcal{O}_K$, $\alpha, \beta, \gamma \in \mc$, 
\begin{align*}
K_1(\alpha,\beta,\gamma;\varpi) =& \left( 1- \frac{1}{N(\varpi)^{\frac{1}{2}+\alpha}}\right)^2  \left( 1- \frac{1}{N(\varpi)^{\frac{1}{2}+\beta}}\right)^2  \left( 1- \frac{1}{N(\varpi)^{2\alpha + 2\gamma}}\right)^2  \left( 1- \frac{1}{N(\varpi)^{2\beta + 2\gamma}}\right)^2, \\
\begin{split}
 K_2(\alpha,\beta,\gamma;\varpi)
=& \left( 1- \frac{1}{N(\varpi)^{\frac{1}{2}+\alpha}}\right)^2  \left( 1- \frac{1}{N(\varpi)^{\frac{1}{2}+\beta}}\right)^2  \Bigg( \left(1-\frac{1}{N(\varpi)}\right)\left(1+\frac{1}{N(\varpi)^{2\alpha+2\gamma}}\right)\left(1+\frac{1}{N(\varpi)^{2\beta+2\gamma}}\right) \\
&+ \frac{1}{N(\varpi)}\left(1-\frac{1}{N(\varpi)^{2\alpha+2\gamma}}\right)^2\left(1-\frac{1}{N(\varpi)^{2\beta+2\gamma}}\right)^2
+ \left(1-\frac{1}{N(\varpi)}\right)\frac{4}{N(\varpi)^{\alpha+\beta+2\gamma}}\\
&+
2\left( 1-\frac{1}{N(\varpi)^{2\gamma}}\right)\left( \frac{1}{N(\varpi)^{\frac{1}{2}+\alpha}} + \frac{1}{N(\varpi)^{\frac{1}{2}+\beta}} + \frac{1}{N(\varpi)^{\frac{1}{2}+2\alpha+\beta + 2\gamma}}+ \frac{1}{N(\varpi)^{\frac{1}{2}+\alpha+2\beta + 2\gamma}}\right)\Bigg ).
\end{split}
\end{align*}

  With the above notations, we note the following result concerning a Dirichlet series related to $Z_2(\alpha,\beta,a,k)$. 
\begin{lemma}
\label{lem:15.3}
For $\Re(\alpha),\Re(\beta)>0, \Re(\gamma)>\tfrac{1}{2}$, we have
\begin{align}
\label{equ:Z_3}
\sum_{k \in G} \frac{(-1)^{N(k)}}{N(k)^{2\gamma}}Z_2(\alpha,\beta,a,\pm i k^2) =
(2^{1-2 \gamma}-1)\zeta_K(2 \gamma) \zeta_K^2(2 \alpha +2 \gamma) \zeta_K^2(2 \beta +2 \gamma) Z_3(\alpha,\beta,\gamma,a),
\end{align}
where
\begin{align*}
Z_3(\alpha,\beta,\gamma,a) = \prod_{\substack{\varpi \equiv 1 \bmod {(1+i)^3} }} Z_{3,\varpi}(\alpha,\beta,\gamma,a),
\end{align*}
 Here for $\varpi |2a$, $Z_{3,\varpi}(\alpha,\beta,\gamma,a) = K_1(\alpha,\beta,\gamma;\varpi)$
and for $\varpi \nmid 2a$, $Z_{3,\varpi}(\alpha,\beta,\gamma,a)= K_2(\alpha,\beta,\gamma;\varpi)$. 

In addition, we have
\begin{itemize}
\item[\textit{(1)}] $Z_3(\alpha,\beta,\gamma,a)$  is analytic and uniformly bounded for $\Re(\alpha),\Re(\beta)\geq \tfrac{1}{2}+\varepsilon, \Re(\gamma)\geq 2\varepsilon$.

\item[\textit{(2)}] $Z_3(\alpha,\beta,\gamma,a) \ll  (\log X)^{10} $ for $\Re(\alpha),\Re(\beta)\geq \tfrac{1}{2}+\frac{1}{\log X}, \Re(\gamma)\geq \frac{2}{\log X}$ with the implied constant being absolute.
\end{itemize}
\end{lemma}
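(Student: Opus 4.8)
The plan is to prove the factorization in \eqref{equ:Z_3} by a standard Euler product manipulation, exploiting the multiplicativity of $Z_2(\alpha,\beta,a,k)$ in $k$ that is implicit in Lemma \ref{lem:15.2}. First I would note that by Lemma \ref{lem:15.2}, the factor $L^2(\tfrac12+\alpha,\chi_{ik_1})L^2(\tfrac12+\beta,\chi_{ik_1})$ depends only on the square-free part $k_1$; when $k=\pm i k^2$ with $k \in G$ one has $k_1 = i$ (up to units), so $\chi_{ik_1}$ is, up to the treatment of ramified primes, the trivial character and these $L$-factors get absorbed into the local factors. Hence the whole sum $\sum_{k \in G} (-1)^{N(k)} N(k)^{-2\gamma} Z_2(\alpha,\beta,a,\pm i k^2)$ factors as an Euler product over primary primes $\varpi$ together with the prime $1+i$. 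The sign $(-1)^{N(k)}$ is multiplicative as a function of $k$ through the prime $1+i$ only (since $N(\varpi)$ is odd for primary $\varpi$, so $(-1)^{N(\varpi)}$ contributes trivially in a way I would make precise), which is exactly why the factor $(2^{1-2\gamma}-1)\zeta_K(2\gamma)$ appears separately: summing $(-1)^{N((1+i)^j)} N((1+i)^j)^{-2\gamma}$ over $j\ge 0$ with the local factor $Z_{2,1+i}$ produces this.

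Next I would carry out the local computation at a primary prime $\varpi \nmid 2a$. Here $k = \pm i \varpi^{2n}$ for $n\ge 0$, so I need $g(k,\varpi^{m}) = g(\pm i \varpi^{2n}, \varpi^{m})$ for $m = n_1+n_2$, which is given explicitly by Lemma \ref{Gausssum}(ii) with $h = 2n$. Plugging this into the formula for $Z_{2,\varpi}(\alpha,\beta,a,\pm i \varpi^{2n})$ from Lemma \ref{lem:15.2}, summing the resulting double series in $n_1,n_2$, and then summing over $n\ge 0$ against $N(\varpi)^{-2n\gamma}$, I expect the main zeta-type poles $\zeta_K^2(2\alpha+2\gamma)\zeta_K^2(2\beta+2\gamma)$ to emerge from the $\varpi^{-2n(\alpha+\gamma)}$ and $\varpi^{-2n(\beta+\gamma)}$ terms, with $K_2(\alpha,\beta,\gamma;\varpi)$ being precisely the correction factor making the identity exact. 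For $\varpi \mid 2a$ the series in $n_1,n_2$ is trivial (just the $(1-\chi(\varpi)N(\varpi)^{-1/2-\alpha})^2(\cdots)^2$ prefactor), and after the analogous sum over $n$ one gets $K_1(\alpha,\beta,\gamma;\varpi)$ times the $\zeta_K$ factors' local pieces; I would verify that the four $(1-N(\varpi)^{-2\alpha-2\gamma})^2(1-N(\varpi)^{-2\beta-2\gamma})^2$ factors in $K_1$ are exactly what cancels the local factors of $\zeta_K^2(2\alpha+2\gamma)\zeta_K^2(2\beta+2\gamma)$ at those primes. This bookkeeping — matching coefficients so that the product of the stated $\zeta_K$ factors times $\prod Z_{3,\varpi}$ reproduces the Euler product exactly — is the main obstacle: it is a finite but genuinely intricate algebraic identity, and the cleanest route is to expand $Z_{3,\varpi}$ as $1 + O(N(\varpi)^{-1-2\varepsilon})$ rather than to track every term, using the explicit forms of $K_1, K_2$ only to certify exactness.

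For the analytic claims (1) and (2), once the factorization is in hand the argument is routine. I would show $Z_{3,\varpi}(\alpha,\beta,\gamma;a) = 1 + O(N(\varpi)^{-1-\delta})$ for some $\delta>0$ uniformly in the stated ranges: the leading terms of the $(1-N(\varpi)^{-s})$-type factors cancel against the leading terms coming from the $\zeta_K$ factors that were pulled out, since the construction of $K_1, K_2$ is designed precisely so that $Z_3$ has no pole on $\Re(\alpha),\Re(\beta) = \tfrac12$, $\Re(\gamma)=0$. More concretely, in the region $\Re(\alpha),\Re(\beta)\ge \tfrac12+\varepsilon$, $\Re(\gamma)\ge 2\varepsilon$ each local factor is $1 + O(N(\varpi)^{-1-c\varepsilon})$, so the Euler product converges absolutely and is uniformly bounded, giving (1). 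For (2), with $\Re(\alpha),\Re(\beta)\ge \tfrac12 + \tfrac{1}{\log X}$ and $\Re(\gamma)\ge \tfrac{2}{\log X}$, the same bound gives convergence of $\sum_\varpi |Z_{3,\varpi}-1|$, but now the tail contributes up to $\log\log N(\varpi) \asymp \log\log X$ before decay kicks in — more precisely, the product over $N(\varpi) \le \exp((\log X)^{?})$ of factors each of size $1 + O(1/N(\varpi))$ is $\ll (\log X)^{O(1)}$, and a careful count of the number of "$1/N(\varpi)$" terms across $K_1, K_2$ shows the exponent is at most $10$, yielding $Z_3 \ll (\log X)^{10}$. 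I would model this last estimate on the analogous bound in \cite[Lemma 5.3]{Shen}, citing the Mertens-type estimate $\prod_{N(\varpi)\le Y}(1-N(\varpi)^{-1})^{-1} \asymp \log Y$ for $K=\mq(i)$ and counting that the worst-case power of $\log X$ comes from the terms $N(\varpi)^{-2\gamma}$ with $\Re(\gamma)$ as small as $\tfrac{1}{\log X}$ inside the reconstituted $\zeta_K(2\gamma)$ and the four $\zeta_K^2$ factors, which total exponent $2 + 2\cdot 2 + 2\cdot 2 = 10$.
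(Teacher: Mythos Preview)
Your strategy matches the paper's: isolate the factor $(2^{1-2\gamma}-1)$ coming from the sign $(-1)^{N(k)}$, then expand the remaining sum over $k\in G$ as an Euler product via the multiplicativity implicit in Lemma~\ref{lem:15.2}, and appeal to the local computations of \cite[Lemma 5.3]{Shen} for both the exact identity and the analytic bounds (1) and (2).

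One point needs correction. The function $(-1)^{N(k)}$ is \emph{not} multiplicative in $k$: for distinct primary primes $\varpi_1,\varpi_2$ one has $(-1)^{N(\varpi_1\varpi_2)}=-1$ while $(-1)^{N(\varpi_1)}(-1)^{N(\varpi_2)}=(-1)(-1)=+1$, and for a single primary $\varpi$ the value $(-1)^{N(\varpi)}=-1$ is not ``trivial''. So one cannot pull it into the Euler product as a local factor in the ordinary sense. What is true is that $(-1)^{N(k)}$ depends only on whether $(1+i)\mid k$: it equals $-1$ if $(k,1+i)=1$ and $+1$ otherwise. The paper exploits exactly this via the split
\[
\sum_{k\in G}(-1)^{N(k)}f(\pm ik^2)=2\sum_{k\in G}f(\pm 2ik^2)-\sum_{k\in G}f(\pm ik^2),
\]
and then uses that the extra factor of $2$ in the argument of $f$ can be removed at the cost of $4^{-\gamma}$ (since $(n,1+i)=1$ throughout), obtaining $(2^{1-2\gamma}-1)\sum_{k\in G}f(\pm ik^2)$. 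Your computation $\sum_{j\ge 0}(-1)^{2^j}2^{-2j\gamma}=(2^{1-2\gamma}-1)(1-2^{-2\gamma})^{-1}$ reaches the same destination once combined with the local factor of $\zeta_K(2\gamma)$ at $1+i$; you just need to justify the factorization by the observation above rather than by an appeal to multiplicativity.
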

\begin{proof}
   We let $f(k)=g(k,n)/N(k)^s$ and we divide the sum over $k$ in \eqref{equ:Z_3} into two sums, according to $(k, 1+i)=1$ or not, to see that
\begin{align*}
   \sum_{k \in G}(-1)^{N(k)}f(\pm i k^2)  =\left (2\sum_{k \in G}f(\pm 2i k^2)-\sum_{k \in G}f(\pm i k^2) \right ),
\end{align*}
   Note that when $(n, 1+i)=1$, $g(k,n)=g(2k,n)$ by Lemma \ref{Gausssum}. It follows that we have $f(\pm 2 i k^2)=4^{-s}f(\pm i k^2)$ so that
\begin{align*}
   \sum_{k \in G}(-1)^{N(k)}f(\pm i k^2)  = (2^{1-2s}-1)\sum_{k \in G}f(\pm ik^2).
\end{align*}

    We then deduce from this and the definition of $Z_2$ given in Lemma \ref{lem:15.2} that
\begin{align*}
\sum_{k \in G} \frac{(-1)^{N(k)}}{N(k)^{2\gamma}}Z_2(\alpha,\beta,a, \pm i k^2)  =&  (2^{1-2 \gamma}-1)\sum_{k \in G} \frac{1}{N(k)^{2\gamma}}Z_2(\alpha,\beta,a,k^2) =  \prod_{\substack{\varpi \in G}} \sum_{b=0}^{\infty} \frac{Z_{2,\varpi}(\alpha,\beta,a,\varpi^{2b})}{N(\varpi)^{2b\gamma}},
\end{align*}
  where the last equality above follows from the observation that $g(\pm i k^2, n)$ is multiplicative with respect to $k$ and that we have $g(\pm i k^2, n)=g(k^2, n)$ when $n$ is primary from Lemma \ref{Gausssum}. The assertions of Lemma \ref{lem:15.3} now follows by arguing similarly to the proof of \cite[Lemma 5.3]{Shen}.
\end{proof}

  Lastly, we note the following result which can be established similar to the proof of \cite[Lemma 5.4]{Shen}. 
\begin{lemma}
\label{lem:15.4}
For $\Re(\alpha),\Re(\beta)>\tfrac{1}{2}, 0 < \Re(\gamma) < \tfrac{1}{2}$, we have
\begin{align*}
\sum_{\substack{a \equiv 1 \bmod {(1+i)^3}  }} \frac{\mu_{[i]}(a)}{N(a)^{2-2\gamma}}  Z_3(\alpha,\beta, \gamma, a) = \frac{ \zeta_K(2\alpha+2\gamma)\zeta_K(2\beta+2\gamma) \zeta_K^4(\alpha+\beta+2\gamma)}{\zeta_K^2(\frac{1}{2}+\alpha+2\gamma)\zeta_K^2(\frac{1}{2}+\beta+2\gamma)} Z_4(\alpha,\beta,\gamma),
\end{align*}
where
\begin{align*}
Z_4(\alpha,\beta,\gamma)
=& K_1(\alpha,\beta,\gamma; 1+i)  \prod_{\substack{\varpi \equiv 1 \bmod {(1+i)^3} }} \left( K_2(\alpha,\beta,\gamma;\varpi)-\frac{1}{N(\varpi)^{2-2\gamma}}K_1(\alpha,\beta,\gamma;\varpi) \right)\\
 & \times \prod_{\substack{\varpi \equiv 1 \bmod {(1+i)^3} }} \frac{(1-\frac{1}{N(\varpi)^{2\alpha+2\gamma}})(1-\frac{1}{N(\varpi)^{2\beta+2\gamma}})(1-\frac{1}{N(\varpi)^{\alpha+\beta+2\gamma}})^4}
{ (1-\frac{1}{N(\varpi)^{\frac{1}{2}+\alpha+2\gamma}})^{2} (1-\frac{1}{N(\varpi)^{\frac{1}{2}+\beta+2\gamma}})^{2} }.
\end{align*}

Moreover, $Z_4(\alpha,\beta,\gamma)$ is analytic and uniformly bounded for $\Re(\alpha),\Re(\beta)\geq \frac{3}{8}, -\frac{1}{16}  \leq \Re(\gamma) \leq \frac{1}{8}$.
\end{lemma}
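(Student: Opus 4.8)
The plan is to follow the proof of \cite[Lemma 5.4]{Shen}: express the sum as an Euler product over primary primes, peel off the principal zeta factors, and estimate the remainder. First, by Lemma~\ref{lem:15.3}, for primary $a$ one has $Z_3(\alpha,\beta,\gamma,a)=\prod_{\varpi\mid a}K_1(\alpha,\beta,\gamma;\varpi)\prod_{\varpi\nmid a}K_2(\alpha,\beta,\gamma;\varpi)$, both products over primary primes $\varpi$. Since $\mu_{[i]}$ is multiplicative and supported on square-free elements, expanding the product over all primary primes of $\big(K_2(\alpha,\beta,\gamma;\varpi)-N(\varpi)^{-(2-2\gamma)}K_1(\alpha,\beta,\gamma;\varpi)\big)$, choosing the second term precisely at the primes dividing a square-free primary $a$ (so that the resulting sign and norm factor yield $\mu_{[i]}(a)N(a)^{-(2-2\gamma)}$), and summing over all such $a$, gives
\begin{align*}
\sum_{\substack{a\equiv 1 \bmod {(1+i)^3}}}\frac{\mu_{[i]}(a)}{N(a)^{2-2\gamma}}Z_3(\alpha,\beta,\gamma,a)
&=\prod_{\substack{\varpi\equiv 1 \bmod {(1+i)^3}}}\left(K_2(\alpha,\beta,\gamma;\varpi)-\frac{K_1(\alpha,\beta,\gamma;\varpi)}{N(\varpi)^{2-2\gamma}}\right).
\end{align*}
The interchange of summation and infinite product is legitimate for $\Re(\alpha),\Re(\beta)>\tfrac12$ and $0<\Re(\gamma)<\tfrac12$, since there $Z_3(\alpha,\beta,\gamma,a)$ is bounded uniformly in $a$ by Lemma~\ref{lem:15.3} and $\sum_{a}N(a)^{-(2-2\gamma)}<\infty$.

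Next I would isolate the polar part. Expanding $K_2(\alpha,\beta,\gamma;\varpi)$ as a series in $N(\varpi)^{-1}$ and using $K_1(\alpha,\beta,\gamma;\varpi)N(\varpi)^{-(2-2\gamma)}\ll N(\varpi)^{-2+2\Re(\gamma)}$, one finds that the local factor above equals
\[
1-\frac{2}{N(\varpi)^{\frac12+\alpha+2\gamma}}-\frac{2}{N(\varpi)^{\frac12+\beta+2\gamma}}+\frac{1}{N(\varpi)^{2\alpha+2\gamma}}+\frac{1}{N(\varpi)^{2\beta+2\gamma}}+\frac{4}{N(\varpi)^{\alpha+\beta+2\gamma}}+O\big(N(\varpi)^{-1-\delta}\big)
\]
for some absolute $\delta>0$, uniformly for $\Re(\alpha),\Re(\beta)\geq\tfrac38$ and $-\tfrac1{16}\leq\Re(\gamma)\leq\tfrac18$; here one checks that the $N(\varpi)^{-1}$ contributions inside $K_2$ cancel and that every term other than the six displayed ones has the form $N(\varpi)^{-c}$ with $\Re(c)\geq 1+\delta$ in this range. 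These six terms are precisely the low-order expansion of the $\varpi$-Euler factor of $\frac{\zeta_K(2\alpha+2\gamma)\zeta_K(2\beta+2\gamma)\zeta_K^4(\alpha+\beta+2\gamma)}{\zeta_K^2(\tfrac12+\alpha+2\gamma)\zeta_K^2(\tfrac12+\beta+2\gamma)}$, so dividing the product over primary $\varpi$ by the primary part of this zeta quotient, and keeping track separately of the ramified prime $1+i$ (which supplies the explicit factor $K_1(\alpha,\beta,\gamma;1+i)$ in $Z_4$), produces the asserted identity, with
\begin{align*}
Z_{4,\varpi}(\alpha,\beta,\gamma)=&\left(K_2(\alpha,\beta,\gamma;\varpi)-\frac{K_1(\alpha,\beta,\gamma;\varpi)}{N(\varpi)^{2-2\gamma}}\right)\\
&\times\frac{\left(1-N(\varpi)^{-2\alpha-2\gamma}\right)\left(1-N(\varpi)^{-2\beta-2\gamma}\right)\left(1-N(\varpi)^{-\alpha-\beta-2\gamma}\right)^4}{\left(1-N(\varpi)^{-\frac12-\alpha-2\gamma}\right)^2\left(1-N(\varpi)^{-\frac12-\beta-2\gamma}\right)^2}=1+O\big(N(\varpi)^{-1-\delta}\big).
\end{align*}
Since each $Z_{4,\varpi}$ is holomorphic in $\Re(\alpha),\Re(\beta)\geq\tfrac38$, $-\tfrac1{16}\leq\Re(\gamma)\leq\tfrac18$ and the Euler product $\prod_\varpi Z_{4,\varpi}$ converges absolutely and uniformly there, $Z_4$ is holomorphic and uniformly bounded in that region.

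The step I expect to be the main obstacle is the explicit expansion of $K_2(\alpha,\beta,\gamma;\varpi)$: one must verify that the six principal terms emerge with exactly the coefficients $-2,-2,1,1,4$, that the $N(\varpi)^{-1}$ contributions genuinely cancel, and — this is what forces the stated domain — that all remaining terms, together with the cross terms produced upon dividing by the zeta Euler factor (such as those of size $N(\varpi)^{-1-2\alpha-4\gamma}$ or $N(\varpi)^{-2\alpha-2\beta-4\gamma}$), stay $O(N(\varpi)^{-1-\delta})$ \emph{uniformly} for $\Re(\alpha),\Re(\beta)\geq\tfrac38$, $-\tfrac1{16}\leq\Re(\gamma)\leq\tfrac18$. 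Handling the ramified prime $1+i$ consistently with the conventions of Lemmas~\ref{lem:15.2}--\ref{lem:15.3} is routine but must be done with care.
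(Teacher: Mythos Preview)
Your proposal is correct and follows exactly the approach the paper itself indicates: the paper gives no detailed proof of this lemma, only the remark that it ``can be established similar to the proof of \cite[Lemma 5.4]{Shen},'' and your argument is precisely that transplant---write the $a$-sum as the Euler product $\prod_{\varpi\text{ primary}}\big(K_2-N(\varpi)^{-(2-2\gamma)}K_1\big)$, extract the zeta quotient by matching the six leading local terms, and verify the remaining Euler product converges absolutely for $\Re(\alpha),\Re(\beta)\ge\tfrac38$, $-\tfrac1{16}\le\Re(\gamma)\le\tfrac18$. Your identification of the cancellation $-2N(\varpi)^{-\frac12-\alpha}+2N(\varpi)^{-\frac12-\alpha}-2N(\varpi)^{-\frac12-\alpha-2\gamma}$ (from the prefactor against the $2(1-N(\varpi)^{-2\gamma})$ term) that produces the shifted exponent $\tfrac12+\alpha+2\gamma$, and of the $N(\varpi)^{-1}$ cancellation between the first two pieces of the bracket in $K_2$, is exactly the computation that drives the argument.
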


\subsection{A mean value estimate for quadratic Hecke $L$-functions}
 In \cite[Theorem 1]{DRHB}, D. R. Heath-Brown established a powerful quadratic large sieve result for Dirichlet characters.  Such result was extended by K. Onodera in \cite{Onodera} to quadratic residue symbols in the Gaussian field. Applying Onodera's result in a similar fashion as in the derivation of \cite[Theorem 2]{DRHB} by Heath-Brown to obtain a mean value estimation for the fourth moment of the family of primitive quadratic Dirichlet $L$-functions,  we have the following upper bound for the fourth moment of quadratic Hecke $L$-functions unconditionally.
\begin{lemma}
\label{lem:2.3}
Suppose $\sigma+it$ is a complex number with $\sigma \geq \frac{1}{2}$. Then
\begin{align*}
\sumstar_{\substack{(d,2)=1 \\ N(d) \leq X}} |L(\sigma+it,\chi_{(1+i)^5d})| ^4
\ll X^{1+\varepsilon} (1+|t|^2)^{1+\varepsilon}.
\end{align*}
\end{lemma}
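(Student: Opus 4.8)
The strategy is to deduce this fourth-moment bound from the quadratic large sieve inequality over the Gaussian field due to Onodera, following closely the method by which Heath-Brown obtained his Theorem 2 in \cite{DRHB} as a consequence of his quadratic large sieve (his Theorem 1). First I would use an approximate functional equation, or more simply the convexity bound combined with a suitable representation, to write $L(\sigma+it,\chi_{(1+i)^5d})^2$ essentially as a short Dirichlet polynomial $\sum_{N(n) \le Y} \chi_{(1+i)^5d}(n) d_{[i]}(n) N(n)^{-\sigma - it}(\text{smooth weight})$ with $Y$ a small power of $X(1+|t|)$, up to an admissible error. Squaring this in absolute value (so that $|L|^4$ becomes essentially $|\sum_n \cdots|^2$) and summing over the square-free $d$ with $N(d) \le X$, we are reduced to bounding
\begin{align*}
\sumstar_{\substack{(d,2)=1 \\ N(d) \le X}} \Big| \sum_{N(n) \le Y} \frac{a(n)\, \chi_{(1+i)^5d}(n)}{N(n)^{it}} \Big|^2,
\end{align*}
where $a(n) = d_{[i]}(n) N(n)^{-\sigma}$ and the weight has been absorbed.

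\textbf{Key steps.} First, open the square in $d$ and recall that $\chi_{(1+i)^5d}(n)$ is, up to controlled factors at the prime $1+i$, the quadratic residue symbol $\leg{n}{d}$ (times a sign depending on $d \bmod (1+i)^5$), so that the diagonal contribution (where the combined argument $n_1 n_2$ is a perfect square) is handled trivially and the off-diagonal is governed by a character sum in $d$ of the shape $\sum_d \leg{n_1 n_2}{d}$. Second, apply Onodera's Gaussian-field quadratic large sieve to the bilinear form $\sum_d \big| \sum_n a(n) \leg{n}{d} \big|^2$, which yields a bound of the form $(X Y)^{1+\varepsilon} \sum_n |a(n)|^2$ (more precisely with the mixed $XY$ structure characteristic of the quadratic large sieve). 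Third, since $\sum_{N(n) \le Y} d_{[i]}(n)^2 N(n)^{-2\sigma} \ll Y^{1-2\sigma+\varepsilon} \ll Y^{\varepsilon}$ for $\sigma \ge \tfrac12$ (the divisor-square sum contributing only a power of $\log$, and $Y^{1-2\sigma} \le 1$), choosing $Y$ an appropriate small power of $X(1+|t|^2)$ and balancing against the error term from the approximate functional equation gives the claimed $X^{1+\varepsilon}(1+|t|^2)^{1+\varepsilon}$. The dependence on $t$ enters through the length $Y$ of the functional-equation expansion, which grows like $N(m)^{1/2}(1+|t|)$ with $N(m) \asymp N(d) \le X$; squaring produces $(1+|t|^2)$, while the $\varepsilon$ in the exponent of $(1+|t|^2)$ absorbs the logarithmic and lower-order factors.

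\textbf{Main obstacle.} The principal technical point is ensuring that Onodera's large sieve is quoted in exactly the right normalization and that the restriction to \emph{square-free} $d$ (and the twist by the fixed modulus $(1+i)^5$, including the sign $\leg{1+i}{d}$-type factors from \eqref{supprule} and the reciprocity law \eqref{quadrec}) does not cost more than a harmless $X^{\varepsilon}$; this is standard but requires care in tracking how the residue symbol $\chi_{(1+i)^5d}$ decomposes via the analysis of Section \ref{sec2.4}. A secondary point is the uniformity in $t$: one must make sure the smooth weights introduced in the approximate functional equation have derivatives under control uniformly in $t$, so that truncating the Dirichlet polynomial at length $\asymp X^{1/2}(1+|t|)^{1+\varepsilon}$ introduces only a negligible error; this is routine since the archimedean factor $\Gamma(s)/\Gamma(1-s)$-type ratios decay rapidly off the relevant range. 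Beyond these bookkeeping matters, the argument is a direct transcription of Heath-Brown's deduction, as indicated in the statement, so I would not expect genuine difficulties.
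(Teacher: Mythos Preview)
Your proposal is correct and follows exactly the approach the paper indicates: the paper does not give a detailed proof but states that the lemma is obtained by applying Onodera's quadratic large sieve over $\mathbb{Z}[i]$ in the same manner that Heath-Brown deduced his Theorem~2 from his Theorem~1 in \cite{DRHB}, which is precisely the approximate-functional-equation-plus-large-sieve argument you outline. The bookkeeping points you flag (handling the fixed twist at $(1+i)^5$, the square-free restriction, and uniformity in $t$) are real but routine, and your treatment of them is accurate.
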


\section{Proof of Theorem \ref{theo:lowerbound}}
\label{sec: pfthm1}

   We recall a result of Landau \cite{Landau} implies that for an algebraic number field $F$ of degree $n=2$ and any primitive ideal character $\chi$ of $F$ with
conductor $q$, we have for $X > 1$,
\begin{align}
\label{PVnf}
  \sum_{N_F(I) \leq X} \chi(I) \ll |N_F(q)\cdot D_F|^{1/3}\log^2( |N_F(q)\cdot D_F|)X^{1/3},
\end{align}
where $N_F(q), N_F(I)$ denotes the norm of $q$ and $I$ respectively, $D_F$ denotes the discriminant of $F$ and $I$ runs over integral ideas of $F$.

   We deduce from \eqref{PVnf} by partial summation that for odd, square-free $d$, the series
\begin{align*}
  \sum_{\substack{ n \equiv 1 \bmod {(1+i)^3} }} \frac{\chi_{(1+i)^5d}(n)}{\sqrt{N(n)}}
\end{align*}
   is convergent and equals to $L(\tfrac 12, \chi_{(1+i)^5d})$.  In particular, this implies that $L(\tfrac 12, \chi_{(1+i)^5d}) \in \mr$. It follows from this and the observation that $k$ is an even natural number that we may further restrict the sum over $d$ in \eqref{lowerbound} to satisfy $X/2 < N(d) \le X$.

  We set $x= X^{\frac{1}{10k}}$ and apply H{\" o}lder's inequality to see that
\begin{align}
\label{Llowerbound}
 \sumstar_{\substack{(d,2)=1 \\ X/2 < N(d) \le X }} L(\thalf, \chi_{(1+i)^5d})^k
\ge \frac{\mathcal{S}_1^{k}}{\mathcal{S}_2^{k-1}},
\end{align}
where
\begin{align*}
\mathcal{S}_1 = \sumstar_{\substack{(d,2)=1 \\ X/2 < N(d) \le X }}  L(\tfrac 12, \chi_{(1+i)^5d})A({d})^{k-1},
\ \  \ \
\mathcal{S}_2=\sumstar_{\substack{(d,2)=1 \\ X/2 < N(d) \le X }} A({d})^{k}.
\end{align*}
 and
$$
A(d)= \sum_{\substack{ n \equiv 1 \bmod {(1+i)^3} \\ N(n) \le x }} \frac{\chi_{(1+i)^5d}(n)}{\sqrt{N(n)}}.
$$

In the remaining of the proof, it thus suffices to bound $\mathcal{S}_1$ and $\mathcal{S}_2$. We bound $\mathcal{S}_2$ first by noting that
\begin{align*}
\mathcal{S}_2 \ll \sumstar_{\substack{(d,2)=1}} A({d})^{k}W(\frac {N(d)}{X}),
\end{align*}
  where $W(t)$ is any non-negative smooth function that is supported on $(\frac 1{2}-\varepsilon, 1+\varepsilon)$ for some fixed small $0<\varepsilon<1/2$ such that $W(t) \gg 1$ for $t \in (\half, 1)$.

 We now expand  $A(d)^k$ to see that
$$
\mathcal{S}_2 = \sum_{\substack{ n_1, \ldots, n_k \equiv 1 \bmod {(1+i)^3} \\ N(n_1),\ldots, N(n_k) \le x}} \frac{1}{\sqrt{N(n_1\cdots n_k)}}
\sumstar_{\substack{(d,2)=1}} \leg {(1+i)^5d}{n_1 \cdots n_k}W(\frac {N(d)}{X}).
$$

   We consider the inner sum above by setting $n=n_1\cdots n_k$.  Using M\"obius function to express the condition that $d$ is square-free, we see that
\begin{align}
\label{smoothcharsum}
\sumstar_{\substack{(d,2)=1}} \leg {(1+i)^5d}{n}W(\frac {N(d)}{X})
= \sum_{\substack{ \alpha \equiv 1 \bmod (1+i)^3 \\ N(\alpha) \leq 2X^{1/2}}}   \leg{(1+i)^5\alpha^2}{n}
\sum_{\substack{ (d,2)=1 }} \leg {d}{n}W(\frac {N(\alpha^2 d)}{X}).
\end{align}

 Note that for smoothed sums involving any non-principal Hecke character modulo $a$ of trivial infinite type, we have (see \cite[(1.4)]{G&Zhao2019}) that for $y \geq 1$
\begin{align}
\label{PV}
    \sum_{c \equiv 1 \bmod{ (1+i)^3}} \chi (c)\Phi \left( \frac{N(c)}{y} \right) \ll_{\varepsilon} N(a)^{(1+\varepsilon)/2}.
\end{align}

 Now, we write $d=jd'$ with $j \in U_K$ and $d'$ being primary and apply the quadratic reciprocity law \eqref{quadrec} to see that
\begin{align}
\label{dchar}
  \leg {d}{n}=\leg {j}{n}\leg {n}{d'}.
\end{align}
  Note that if $n$ is not a square then the symbol $\leg {n}{\cdot}$ can be regarded as a  non-principal Hecke character $\pmod {16n}$ of trivial infinite type. By decomposing the sum over $d$ in \eqref{smoothcharsum} into sums over $j, d'$ and apply \eqref{PV} to the sum over $d'$, we deduce that the sum over $d$ in \eqref{smoothcharsum} is $\ll N(n)^{1/2+\varepsilon}$.  On the other hand, the
sum over $d$ is trivially $\ll X/N(\alpha)^2$.  We then conclude that if $n$ is not a square,
\begin{align}
\label{nnotsquare}
\sum_{\substack{(d,2)=1}} \leg {(1+i)^5d}{n}W(\frac {N(\alpha^2 d)}{X}) \ll \sum_{\substack{ \alpha \equiv 1 \bmod (1+i)^3 \\ N(\alpha) \leq 2X^{1/2}}}   \leg{(1+i)^5\alpha^2}{n}
\min \Big( N(n)^{1/2+\varepsilon}, \frac{X}{N(\alpha)^2}\Big)
\ll X^{\frac 12} N(n)^{\frac 14+\varepsilon}.
\end{align}

  If $n$ is a perfect square, then by applying the following result for the Gauss circle problem (see \cite{Huxley1}) with $\theta = 131/416$, 
\begin{align}
\label{Gausscircle}
  \sum_{N(a) \leq x} 1 = \pi x+O(x^{\theta})
\end{align}
 together with a routine argument, we see that
\begin{align}
\label{nsquare}
\sumstar_{\substack{(d,2)=1}} \leg {(1+i)^5d}{n}W(\frac {N(d)}{X}) \ll
\sumstar_{\substack{ N(d) \le X \\ (d,2n)=1}}  1 = \frac{2\pi X}{3\zeta_K(2)}\mathcal{P}(n) + O(X^{\frac 12+\varepsilon}N(n)^{\varepsilon}),
\end{align}
  where $\mathcal{P}(n)$ is defined in \eqref{Pn}.

    Using \eqref{nnotsquare} and \eqref{nsquare}, we see that
\begin{align}
\label{boundS2}
\mathcal{S}_2 \ll X \sum_{\substack{ n_1, \ldots, n_k \equiv 1 \bmod {(1+i)^3} \\ N(n_1),\ldots, N(n_k) \le x \\ n_1 \cdots n_k =n^2 }} \frac{\mathcal{P}(n)}{N(n)}
+ O\Big( X^{\frac 12+\epsilon} x^{k(\frac 34+\epsilon)}\Big).
\end{align}

   We note that
\begin{align}
\label{estmainS2}
& \sum_{\substack{ n \equiv 1 \bmod {(1+i)^3} \\ N(n)^2 \le x }}
\frac{d_{[i],k}(n^2)}{N(n)} \mathcal{P}(n) \ll \sum_{\substack{ n_1, \ldots, n_k \equiv 1 \bmod {(1+i)^3} \\ N(n_1),\ldots, N(n_k) \le x \\ n_1 \cdots n_k = n^2 }} \frac {\mathcal{P}(n)}{N(n)}
\ll \sum_{\substack{ n \equiv 1 \bmod {(1+i)^3} \\ N(n)^2 \le x^k}}
\frac{d_{[i],k}(n^2)}{N(n)}  \mathcal{P}(n).
\end{align}

 Similar to \cite[Theorem 2]{Selberg}, we have that for a positive constant $C(k)$,
\begin{align}
\label{daverage}
\sum_{\substack{ n \equiv 1 \bmod {(1+i)^3} \\ N(n) \le z }}
\frac{d_{[i],k}(n^2)}{N(n)} \mathcal{P}(n)
\sim C(k) (\log z)^{k(k+1)/2}.
\end{align}

 Applying this in \eqref{estmainS2} and setting $x=X^{\frac 1{10k}}$ in \eqref{boundS2}, we deduce that
\begin{align}
\label{S2upperbound}
\mathcal{S}_2 \ll X (\log X)^{k(k+1)/2}.
\end{align}

 We evaluate $\mathcal{S}_1$ next. By applying the approximate
functional equation \eqref{fcneqnL} for $L(\tfrac 12,\chi_{(1+i)^5d})$, we see that
\begin{align}
\label{S1}
\begin{split}
\mathcal{S}_1= 2\sum_{\substack{ n \equiv 1 \bmod {(1+i)^3}}} \frac{1}{\sqrt{N(n)}}  \sum_{\substack{ n_1, \ldots, n_k \equiv 1 \bmod {(1+i)^3} \\ N(n_1),\ldots, N(n_{k-1}) \le x}} \frac{1}{\sqrt{N(n_1\cdots n_{k-1})}}
\sumstar_{\substack{(d,2)=1 \\ X/2 < N(d) \le X }}  \leg {(1+i)^5d}{n n_1 \cdots n_{k-1}}
 V_1\Big(\frac{N(n)}{\sqrt{N(d)}}\Big).
\end{split}
\end{align}

  Here we note that (see \cite[Lemma 2.1]{sound1}) $V_{1}(x)$ is real-valued and smooth on $[0, \infty)$ and the $j$-th derivative of $V_{1}(x)$ satisfies
\begin{align}
\label{2.07}
      V_{1}\left (x \right) = 1+O(\xi^{1/2-\epsilon}) \; \mbox{for} \; 0<\xi<1   \quad \mbox{and} \quad V^{(j)}_{1}\left (\xi \right) =O(e^{-\xi}) \; \mbox{for} \; \xi >0,\; j \geq 0.
\end{align}

  If $n n_1 \cdots n_{k-1}$ is not a square, then using \eqref{PV}, \eqref{2.07} and
partial summation we see that
\begin{align}
\label{sumnonsquare}
 \sumstar_{\substack{(d,2)=1 \\ X/2 < N(d) \le X }}  \leg {(1+i)^5d}{n_1 \cdots n_{k-1}}
 V_1\Big(\frac{N(n)}{\sqrt{N(d)}}\Big)
\ll X^{\frac 12} N(nn_1\cdots n_{k-1})^{\frac 14 +\epsilon}  e^{-N(n)/\sqrt{X}}.
\end{align}
If $n n_1\cdots n_{k-1}$ is an square, then using the right-hand side expression in \eqref{nsquare} together with \eqref{2.07} and partial
summation implies that the sum over $d$ in \eqref{S1} is
\begin{align}
\label{sumsquare}
\begin{split}
& \sumstar_{\substack{(d,2)=1 \\ X/2 < N(d) \le X }}  \leg {(1+i)^5d}{n_1 \cdots n_{k-1}}
 V_1\Big(\frac{N(n)}{\sqrt{N(d)}}\Big) \\
=& \frac{X}{3 \zeta_K(2)} \mathcal{P}(nn_1\cdots n_{k-1})
\int_1^2  V_1\Big(\frac{\sqrt{2} N(n)}{\sqrt{X t}}\Big)  dt + O(X^{\frac 12+\epsilon}N(nn_1\cdots n_{k-1})^{\varepsilon} e^{-N(n)/\sqrt{X}}).
\end{split}
\end{align}
  Applying \eqref{sumnonsquare} and \eqref{sumsquare} in \eqref{S1}, we see that the error terms in \eqref{sumnonsquare} and \eqref{sumsquare}
contribute to \eqref{S1} an amount $\ll X^{\frac 12+\epsilon} x^{(k-1)(\frac 34+\epsilon)} X^{\frac 38+\epsilon}
\ll X^{\frac {39}{40}}$.

  To estimate contribution of the main term in \eqref{sumsquare} to \eqref{S1}, we write $n_1\cdots n_{k-1}$ as $r s^2$ where $r$ and $s$ are
primary and $r$ is square-free.  Then $n$ must be of the form $r\ell^2$ where $\ell$ is primary.
It follows that the contribution of the main term in \eqref{sumsquare} to \eqref{S1} is
$$
\frac{2X}{3\zeta_K(2)}  \sum_{\substack{ r, s \equiv 1 \bmod {(1+i)^3} \\ rs^2 =n_1 \cdots n_{k-1} \\ N(n_1), \ldots, N(n_{k-1}) \le x }}
\frac{1}{N(rs)} \sum_{\substack{ \ell \equiv 1 \bmod {(1+i)^3}}}
\frac{1}{N(\ell)} \int_1^2  \mathcal{P}(rs \ell)V_1\Big( \frac{\sqrt{2} N(r\ell^2) }{\sqrt{Xt}}\Big) dt.
$$
Note that $N(r) \le x^{k-1} < X^{\frac 1{10}}$, and by a standard argument, we see that the sum over $\ell$ above is
$$
= \frac 23 \mathcal{P}(rs)\prod_{\substack{ \varpi \equiv 1 \bmod {(1+i)^3} \\ \varpi \nmid rs}}
\Big(1-\frac{1}{N(\varpi)(N(\varpi+1)} \Big) \frac{1}{4} \cdot \frac{\pi}{4}
\log \frac{\sqrt{X}}{N(r)} + O(1).
$$

It follows that the contribution of the main term in \eqref{sumsquare}  to \eqref{S1} is
\begin{align*}
&\gg X\log X \sum_{\substack{ r, s \equiv 1 \bmod {(1+i)^3} \\ rs^2 =n_1 \cdots n_{k-1} \\ N(n_1), \ldots, N(n_{k-1}) \le x }}
\frac{1}{N(rs)} \mathcal{P}(rs)
\\
&\gg X\log X \sum_{\substack{ r \text{ primary and square-free}\\ s\text{ primary} \\ N(rs^2) \le x }} \frac{d_{[i],k-1}(rs^2)}{N(rs)}
\mathcal{P}(rs) \gg X (\log X) (\log x)^{k-1+k(k-1)/2}.
\end{align*}
 We then deduce that
$$
\mathcal{S}_1 \gg X(\log X)^{k(k+1)/2}.
$$
The assertion of Theorem \ref{theo:lowerbound} now follows from \eqref{Llowerbound}, \eqref{S2upperbound} and the above bound.

\section{Proof of Theorem \ref{upperbound}}
\label{sec:upper bd}

\subsection{A few lemmas}

  We first include a few lemmas needed in the proof of Theorem \ref{upperbound}. We denote $\Lambda_{[i]}(n)$ for the von Mangoldt function on $K$. Thus $\Lambda_{[i]}(n)$ equals the coefficient of $N(n)^{-s}$ in the Dirichlet series expansion of $\zeta^{'}_K(s)/\zeta_K(s)$. Our first lemma provides an upper bound of $\log |L(s, \chi)|$ in terms of a sum involving prime powers.
\begin{lemma}
\label{lem: logLbound}
Let $\chi$ be a non-principal primitive quadratic Hecke character modulo $m$ of trivial infinite type. Assume GRH for $\zeta_K(s)$ and $L(s, \chi)$. Let $T$ be a large number and $x \geq 2$.  Let $\lambda_0=0.56\ldots$
denote the unique positive real number satisfying $e^{-\lambda_0} = \lambda_0$.
For all $\lambda_0 \leq \lambda \leq 5$ we have uniformly for $|t| \leq T$ and $\half \leq \sigma \leq \half+\frac {\lam}{\log x}$ that
\begin{align}
\label{logLupperbound}
\log |L(\sigma+it, \chi)| \le \Re{\sum_{\substack{(n) \\ N(n) \le x }} \frac{\Lam(n)}{N(n)^{\frac 12+ \frac{\lam}{\log x} +it} \log N(n)}
\frac{\log (x/N(n))}{\log x}}+( \log T+\frac {\log N(m)}{2} )(\half-\sigma+\frac{1+\lam}{\log x})+ O\Big( \frac{1}{\log x}\Big),
\end{align}
where the sum $\displaystyle \sum_{(n)}$ means that the sum is over integral ideals of $\mathcal{O}_K$.
\end{lemma}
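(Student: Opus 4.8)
The statement is the standard ``Soundararajan-type'' explicit formula bounding $\log|L(s,\chi)|$ on the critical strip from above by a short Dirichlet polynomial over prime powers, and the proof should follow the template in \cite[Lemma 2.1]{Sound01} (see also \cite{Chandee}, \cite{S&Y}, \cite{Shen}), adapted to the Hecke $L$-function $L(s,\chi)$ over $K=\mq(i)$. The starting point is the identity, valid for any $s$ not a zero of $\Lambda(s,\chi)$, obtained by taking logarithmic derivatives in the Hadamard factorization of $\Lambda(s,\chi)$ from \eqref{Lambda}:
\[
-\Re\frac{L'}{L}(s,\chi) = -\Re\sum_{\rho}\frac{1}{s-\rho} + \half\log\bigl(|D_K|N(m)\bigr) + \Re\frac{\Gamma'}{\Gamma}(s) - \log(2\pi) + O(1),
\]
where $\rho$ runs over the nontrivial zeros. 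Under GRH all $\rho$ have $\Re\rho=\half$, so $\Re\frac{1}{s-\rho}\ge 0$ when $\Re s\ge\half$; this is what makes the zero-sum have a favorable sign. The archimedean factor contributes $\Re\frac{\Gamma'}{\Gamma}(s) = \log|t| + O(1)$ for $|t|$ bounded below and $\le T$ (and $O(\log T)$ in all cases), giving the $(\log T + \half\log N(m))$ term.

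The second ingredient is the Perron-type smoothing: one integrates $-\frac{L'}{L}(s+w,\chi)$ against a kernel whose Mellin transform produces the weight $\frac{\log(x/N(n))}{\log x}$ supported on $N(n)\le x$. Concretely, for a suitable contour one writes
\[
\sum_{N(n)\le x}\frac{\Lambda_{[i]}(n)}{N(n)^{s'}}\,\frac{\log(x/N(n))}{\log x}
= \frac{1}{2\pi i}\int_{(c)} -\frac{L'}{L}(s'+w,\chi)\,\frac{x^{w}}{w^{2}\log x}\,dw
\]
and shifts the contour to pick up the pole at $w=0$ (contributing $-\frac{L'}{L}(s',\chi)$) and the contributions of the zeros $\rho$ and trivial zeros. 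Here I would take $s' = \half + \frac{\lambda}{\log x} + it$, so that $\Re(s'-\half) = \frac{\lambda}{\log x}>0$ keeps us to the right of the critical line; the condition $\lambda\ge\lambda_0$ with $e^{-\lambda_0}=\lambda_0$ is exactly what is needed so that, after integrating the zero terms, the quantity $\sum_\rho(\cdots)$ combines with the main zero-sum $-\Re\sum_\rho\frac{1}{s-\rho}$ to yield a bound of the clean shape in \eqref{logLupperbound} rather than something with a worse constant; this is the one genuinely delicate optimization in Soundararajan's argument and I would reproduce it verbatim. Integrating $\log|L(\sigma+it,\chi)|$ from $\sigma$ out to $+\infty$ (where $L\to 1$) against $-\Re\frac{L'}{L}$, and using the sign of the zero terms together with the $\Gamma$-factor and conductor estimates, produces the stated inequality, with the term $(\log T + \half\log N(m))(\half-\sigma+\frac{1+\lambda}{\log x})$ coming from bounding the zero contribution plus archimedean contribution over the range of integration, and the $O(1/\log x)$ absorbing the secondary terms (pole of $\zeta_K$ at $w=1$ shifted away, trivial zeros, and the error in replacing $\frac{L'}{L}$ by its truncated Dirichlet series).

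The main obstacle is purely bookkeeping rather than conceptual: one must carry the two extra parameters ($N(m)$ in the conductor and the shift $it$ with $|t|\le T$) through the contour shift and verify the uniformity claimed in \eqref{logLupperbound}, making sure the $\Gamma$-factor estimate $\Re\frac{\Gamma'}{\Gamma}(\sigma+it) \le \log(2+|t|) + O(1)$ is applied correctly over the whole integration range $\sigma\in[\half,\half+\frac{\lambda}{\log x}]$ and that the factor $\half$ in front of $\log N(m)$ is exactly what the functional equation \eqref{fneqn} dictates (the conductor of $L(s,\chi)$ over $K$ being $|D_K|N(m)$, whence the ``$\half\log$''). I would also need the analogue over $K$ of the elementary fact that $\Lambda_{[i]}(n)$ is supported on prime-ideal powers and that $\sum_{N(n)\le x}\Lambda_{[i]}(n)/N(n)\log N(n)\ll \log\log x$, but these follow from the prime ideal theorem for $K$ and are standard. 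No new input beyond GRH for $\zeta_K$ and $L(s,\chi)$ and the functional equation already recorded in Section~\ref{sect: apprfcneqn} is required.
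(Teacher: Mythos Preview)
Your proposal is correct and follows essentially the same approach as the paper: Hadamard factorization of $\Lambda(s,\chi)$, the smoothed explicit formula via $\frac{1}{2\pi i}\int_{(c)}-\frac{L'}{L}(s+w,\chi)\frac{x^w}{w^2}\,dw$, and then integrating in $\sigma$ with the choice $\sigma_0=\tfrac12+\frac{\lambda}{\log x}$ so that the coefficient of the zero-sum $F(s_0)$ becomes $\frac{x^{1/2-\sigma_0}}{(\sigma_0-1/2)\log^2 x}-\frac{1}{\log x}\le 0$ precisely when $\lambda\ge\lambda_0$. One small slip: there is no ``pole of $\zeta_K$ at $w=1$'' to worry about, since $\chi$ is non-principal and $L(s,\chi)$ is entire; the only residues picked up in the contour shift are at $w=0$, at the nontrivial zeros $\rho-s$, and at the trivial zeros $-k-s$.
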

\begin{proof}
  We denote $s$ for $\sigma+it$ and we interpret $\log |L(s, \chi)|$ as $-\infty$ when $L(s, \chi)=0$. Thus we may suppose $L(\sigma+it, \chi) \neq 0$ in the rest of the proof.  Recall the associated function $\Lambda(s, \chi)$ defined as in \eqref{Lambda}. Here $\Lambda(s, \chi)$ is analytical in the entire complex plane since $\chi$ is non-principal. As $\Gamma(s)$ has simple poles at the non-positive rational integers (see \cite[\S 10]{Da}), we see from the expression of $\Lambda(s, \chi)$ from \eqref{Lambda} that $L(s, \chi)$ has simple zeros at $s=0, -1, -2, \ldots$, these are called the trivial zeros of $L(s, \chi)$. Since we are assuming GRH, we know that the non-trivial zeros of $L(s, \chi)$ are precisely the zeros of $\Lambda(s, \chi)$.

  Let $\rho=\tfrac 12+i\gamma$ run over the non-trivial zeros of $L(s, \chi)$. We then deduce from \cite[Theorem 5.6]{HIEK} and the observation that $L(s,\chi)$ is analytic at $s=1$ since $\chi$ is non-principal and primitive that
\begin{align}
\label{Lproductzeros}
  \Lambda(s,\chi)=e^{A + Bs}\prod_{\rho}\left(1 - \frac{s}{\rho}\right)e^{\frac{s}{\rho}},
\end{align}
where $A, B=B(\chi)$ are constants.

    Taking the logarithmic derivative on both sides of \eqref{Lproductzeros} and making use of \eqref{Lambda}, we obtain that
\begin{equation*}
-\frac{L'}{L}(s,\chi)=\frac{\Gamma'}{\Gamma}(s) - \log\pi + \frac{1}{2}\log N(m) - B - \sum_{\rho}\left(\frac{1}{s-\rho} + \frac{1}{\rho}\right).
\end{equation*}
    This implies that
\begin{align}
\label{Lderivreal}
-\Re{\frac{L'}{L}(s,\chi)}=\Re{\frac{\Gamma'}{\Gamma}(s)} - \log\pi + \frac{1}{2}\log N(m) - \Re(B) - \sum_{\rho}\Re{\left(\frac{1}{s-\rho} + \frac{1}{\rho}\right)}.
\end{align}

    On the other hand, combining the functional equation \eqref{fneqn} with \eqref{Lproductzeros}, we see that
\begin{align*}
  e^{A + Bs}\prod_{\rho}\left(1 - \frac{s}{\rho}\right)e^{\frac{s}{\rho}}= W(\chi)(N(m))^{-1/2}e^{A + B(1-s)}\prod_{\rho}\left(1 - \frac{1-s}{\rho}\right)e^{\frac{1-s}{\rho}}.
\end{align*}
  Taking logarithmic derivative on both sides of the above expression, we obtain that
\begin{equation*}
  2B=-\sum_{\rho}\left(\frac{1}{s-\rho} + \frac{1}{1-s-\rho} + \frac{1}{\rho} + \frac{1}{\rho}\right)=-2\sum_{\rho}\frac{1}{\rho}.
\end{equation*}
   Here the second equality above follows by noting that the terms containing $1-s-\rho$ and $s-\rho$ cancel as both $\rho, 1-\rho$ are zeros from the functional equation \eqref{fneqn}.
We note here that
\begin{equation*}
   \sum_{\rho}\frac{1}{\rho}
\end{equation*}
   is convergent since if $\rho$ is a zero, so is $\overline{\rho}$ and we have
\begin{align*}
  \rho^{-1} + \overline{\rho}^{-1}\ll|\rho|^{-2}
\end{align*}
   and we know that $\displaystyle \sum_{\rho}\frac{1}{|\rho|^2}$ is convergent by \cite[Lemma 5.5]{iwakow}.

   We then deduce that
\begin{equation*}
\Re(B)=-\sum_\rho\Re(\rho^{-1}).
\end{equation*}
   Combining the above with \eqref{Lderivreal}, we see that
\begin{align}
\label{LprimeLbound}
\begin{split}
-\Re{\frac{L'}{L}(s,\chi)}&=\Re{\frac{\Gamma'}{\Gamma}(s)} - \log\pi + \frac{1}{2}\log N(m) - \Re{\sum_{\rho}\left(\frac{1}{s-\rho}\right)}\\
&=\Re{\frac{\Gamma'}{\Gamma}(s)} - \log\pi + \frac{1}{2}\log N(m)-F(s) \\
&=  \log (|t|+1)+\frac{1}{2}\log N(m) +O(1) - F(s) \\
& \leq  \log T+\frac{1}{2}\log N(m) +O(1) - F(s).
\end{split}
\end{align}
  where the third line above follows from $\frac{\Gamma'}{\Gamma}(s)=\log s+O(|s|^{-1})$ by (6) of \cite[\S 10]{Da} and where we define
\begin{align*}
 F(s) = \Re{\sum_{\rho} \frac{1}{s-\rho}}= \sum_{\rho} \frac{\sigma-1/2}{(\sigma-1/2)^2+
 (t-\gamma)^2}.
\end{align*}

  Integrating the last expression given for $-\Re{\frac{L'}{L}(s,\chi)}$ in \eqref{LprimeLbound} from $\sigma = \Re(s)$ to $\sigma_0 > \frac 12$, we obtain by setting $s_0 =\sigma_0 +it$ that
\begin{align}
\label{Lprimediffbound}
\begin{split}
& \log |L(s, \chi)| - \log |L(s_0, \chi)| \\
& = \Big(\log T+\frac {\log N(m)}{2} +O(1)\Big) (\sigma_0-\sigma) -\int_{\sigma}^{\sigma_0} F(u+it) du \\
& \leq  (\sigma_0-\sigma) \Big(\log T+\frac {\log N(m)}{2} +O(1)\Big),
\end{split}
\end{align}
  where the last inequality above follows from the observation that $F(s) \geq 0$ for all $s$ satisfying $\Re(s) \geq \half$.

  Next, we deduce upon integrating term by term using the Dirichlet series expansion of
 $-\frac{L^{\prime}}{L}(s+w, \chi)$ that
$$
 \frac{1}{2\pi i} \int\limits_{(c)} -\frac{L^{\prime}}{L}(s+w, \chi)
 \frac{x^w}{w^2} dw  = \sum_{\substack{(n) \\ N(n) \le x }}
  \frac{\Lambda_{[i]}(n)\chi(n)}{N(n)^s} \log \leg {x}{N(n)},
 $$
   where $c>2$ is a large number. Now moving the line of integration in the above expression to the left and calculating residues, we see also that
 $$
 \frac{1}{2\pi i} \int\limits_{(c)} -\frac{L^{\prime}}{L}(s+w, \chi)
 \frac{x^w}{w^2} dw =-\frac{L^{\prime}}{L}(s, \chi) \log x - \Big(\frac{L^{\prime}}{L}(s, \chi)\Big)^{\prime}
 -\sum_{\rho} \frac{x^{\rho-s}}{(\rho-s)^2} -\sum_{k=0}^{\infty}
 \frac{x^{-k-s}}{(k+s)^2}.
 $$

  Comparing the above two expressions, we deduce that unconditionally, for any $x \ge 2$, we have
 \begin{align}
\label{Lprimeseries}
\begin{split}
 -\frac{L^{\prime}}{L}(s, \chi)= \sum_{\substack{(n) \\ N(n) \le x }}
 \frac{\Lambda_{[i]}(n)}{N(n)^s} \frac{\log (x/N(n))}{\log x} &+ \frac{1}{\log x} \Big(\frac{L^{\prime}}{L}(s, \chi)\Big)^{\prime}
 + \frac{1}{\log x} \sum_{\rho} \frac{x^{\rho-s}}{(\rho-s)^2}
 + \frac{1}{\log x} \sum_{k=0}^{\infty} \frac{x^{-k-s}}{(k+s)^2}.
\end{split}
 \end{align}

 We now integrate the real parts on both sides of \eqref{Lprimeseries} over $\sigma=\Re(s)$ from $\sigma_0$ to $\infty$ to see that for $x \geq 2$,
 \begin{align}
\label{logL}
\begin{split}
\log |L(s_0, \chi)| = \Re \Big( \sum_{\substack{(n) \\ N(n) \le x}} \frac{\Lambda_{[i]}(n)}{N(n)^{s_0} \log N(n)} \frac{\log (x/N(n))}{\log x}
  &- \frac{1}{\log x} \frac{L^{\prime}}{L}(s_0, \chi)\\
  & + \frac{1}{\log x} \sum_{\rho}
 \int_{\sigma_0}^{\infty} \frac{x^{\rho-s}}{(\rho-s)^2} d\sigma +O\Big(\frac{1}{\log x}\Big)\Big).
\end{split}
\end{align}
 Observe that
$$
\sum_{\rho}\Big|\int_{\sigma_0}^{\infty} \frac{x^{\rho -s}}{(\rho -s)^2} d\sigma\Big|
\le \sum_{\rho}\int_{\sigma_0}^{\infty}\frac{ x^{\frac 12-\sigma}}{|s_0-\rho|^2} d\sigma
= \sum_{\rho}\frac{x^{\frac 12-\sigma_0}}{|s_0-\rho|^2 \log x}= \frac{x^{\frac 12-\sigma_0}F(s_0)}{(\sigma_0-\frac 12)\log x}.
$$
 Applying this and \eqref{LprimeLbound} in \eqref{logL}, we see that
 \begin{align}
\label{logLbound}
\begin{split}
 \log |L(s_0, \chi)|
 &\le  \Re \sum_{\substack{(n) \\ N(n)\le x}} \frac{\Lambda_{[i]}(n)}{N(n)^{s_0} \log N(n)} \frac{\log (x/N(n))}{\log x}
 + \frac{1}{\log x} ( \log T+\frac {\log N(m)}{2} ) \\
& \hskip .5 in+F(s_0) \Big( \frac{x^{\frac 12-\sigma_0}}{(\sigma_0-\half) \log^2 x} -\frac{1}{\log x}
 \Big)
 + O\Big(\frac{1}{\log x}\Big).
\end{split}
 \end{align}

Adding inequalities \eqref{Lprimediffbound} and \eqref{logLbound}, we deduce that
\begin{align*}
 \log |L(s, \chi)|  &\le
 \Re  \sum_{\substack{(n) \\ N(n)\le x}} \frac{\Lambda_{[i]}(n)}{N(n)^{s_0} \log N(n)} \frac{\log (x/N(n))}{\log x}
 + ( \log T+\frac {\log N(m)}{2} ) \Big(\sigma_0 -\sigma + \frac{1}{\log x}\Big)
 \\
 &\hskip .5 in +F(s_0) \Big( \frac{x^{\frac 12-\sigma_0}}{(\sigma_0-\half) \log^2 x} -\frac{1}{\log x}
 \Big) + O\Big(\frac{1}{\log x}\Big)+O(\sigma_0-\sigma).
 \end{align*}
  The assertion of the proposition now follows by setting $\sigma_0 =\frac12 + \frac{\lam}{\log x}$ with $\lam \ge \lam_0$ and by omitting the term involving $F(s_0)$ since it is negative.
 \end{proof}

    Our next lemma treats essentially the sum over prime squares in \eqref{logLupperbound}.
\begin{lemma}
\label{lem: primesquareLbound}
Assume GRH for $\zeta_K(s)$. Let $\lambda_0$ be given as in Lemma \ref{lem: logLbound} and let $z \in \mathbb{C}$ with $0 \leq \Re (z) \leq \frac{1}{\log X}$.
We have for $x \geq 10$,
\begin{align}
\label{primesquarebound}
 \sum_{\substack{ \varpi \equiv 1 \bmod {(1+i)^3} \\ N(\varpi) \leq x^{1/2} }}  \frac{1}{N(\varpi)^{1+\frac{2\lambda_0}{\log x}+2z}}  \frac{\log (\frac{x}{N(\varpi)^2})}{\log x} = \mathcal{L}(z,x)+O(1),
\end{align}
where $\mathcal{L}(z,x)$ is given in \eqref{Ldef}.
\end{lemma}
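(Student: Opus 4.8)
We write $\lambda_0$ as in the statement, set $u:=\frac{2\lambda_0}{\log x}+2z$, and let $S$ denote the left-hand side of \eqref{primesquarebound}, so that with $w(t):=\log(x/t)/\log x$ (supported on $0<t\le x$) one has
\[
S=\sum_{\substack{\varpi\equiv 1\bmod{(1+i)^3}\\ N(\varpi)\le x^{1/2}}}\frac{w\bigl(N(\varpi)^2\bigr)}{N(\varpi)^{1+u}}.
\]
Since $0\le\Re z\ll 1/\log x$ we get $\tfrac{2\lambda_0}{\log x}\le\Re u\le\tfrac{2\lambda_0+2}{\log x}$; in particular $\Re u>0$, hence $\Re(1+u)>1$ and $|\arg u|<\pi/2$. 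The plan is threefold: (i) show $S=\log\zeta_K(1+u)+O(1)$ by using the prime ideal theorem for $K$ and partial summation to peel off both the smooth weight $w$ and the cutoff $N(\varpi)\le x^{1/2}$; (ii) extract the main term from the simple pole of $\zeta_K$ at $s=1$, giving $\log\zeta_K(1+u)=-\log u+O(1)$; and (iii) do a short case analysis on $|z|$ matching $-\log u$ to $\mathcal L(z,x)$.

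For step (i) I would first write $\log\zeta_K(1+u)=\sum_{\varpi}\sum_{k\ge1}\tfrac1kN(\varpi)^{-k(1+u)}$ over all prime generators $\varpi$; the contribution of the terms with $k\ge2$ and of the single Euler factor at $1+i$ is $\ll\sum_{\mathfrak p}\sum_{k\ge2}N(\mathfrak p)^{-k\Re(1+u)}+N((1+i))^{-1}=O(1)$ because $\Re(1+u)>1$, so $\log\zeta_K(1+u)=\sum_{\varpi\ \mathrm{primary}}N(\varpi)^{-1-u}+O(1)$. The tail $\sum_{N(\varpi)>x^{1/2}}N(\varpi)^{-1-u}$ has modulus $\le\sum_{N(\varpi)>x^{1/2}}N(\varpi)^{-1-\Re u}\ll\frac{x^{-\Re u/2}}{(\Re u)\log x}\ll 1$ (since $x^{-\Re u/2}\le e^{-\lambda_0}$ and $\Re u\gg 1/\log x$), and replacing the sharp cutoff by $w$ costs at most $\frac{2}{\log x}\sum_{N(\varpi)\le x^{1/2}}\frac{\log N(\varpi)}{N(\varpi)^{1+\Re u}}\ll\frac{1}{\log x}\sum_{N(\varpi)\le x^{1/2}}\frac{\log N(\varpi)}{N(\varpi)}\ll 1$, using $N(\varpi)^{\Re u}\asymp 1$ on the range. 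This gives $S=\log\zeta_K(1+u)+O(1)$. When $|\Im z|$ is large this reduction is less convenient, since $|\log\zeta_K(1+u)|$ need no longer be $O(1)$; there I would instead bound $S$ directly by partial summation, writing $N(\varpi)^{-1-u}w(N(\varpi)^2)$ as the slowly varying factor $N(\varpi)^{-1-\Re u}w(N(\varpi)^2)$ times $N(\varpi)^{-2i\Im z}$ and invoking the prime ideal theorem to see that $\sum_{N(\varpi)\le v}N(\varpi)^{-1-2i\Im z}=O(1)$ uniformly for $|\Im z|\gg 1$, which yields $S=O(1)$ in agreement with $\mathcal L(z,x)=0$ when $|z|>1$.

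For step (ii), since $\zeta_K$ has a simple pole at $s=1$ with residue $r_K>0$ and is holomorphic and zero-free on $\{\Re s\ge1\}\setminus\{1\}$ (the non-vanishing on $\Re s=1$ being classical), the map $u\mapsto u\,\zeta_K(1+u)$ is holomorphic on $\{\Re u>0\}$ and extends continuously and without zeros to the compact set $\{0\le\Re u,\ |u|\le2\}$, with value $r_K$ at $u=0$; hence it is bounded above and below there, so $\log\zeta_K(1+u)=-\log u+\log\bigl(u\,\zeta_K(1+u)\bigr)=-\log u+O(1)$ uniformly for $0<\Re u$, $|u|\le2$, and since $|\arg u|<\pi/2$ also $-\log u=-\log|u|+O(1)$. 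For step (iii): if $|z|\le(\log x)^{-1}$ then $\tfrac{2\lambda_0}{\log x}\le\Re u\le|u|\le\tfrac{2\lambda_0}{\log x}+2|z|\ll(\log x)^{-1}$, so $|u|\asymp(\log x)^{-1}$ and $-\log|u|=\log\log x+O(1)$; if $(\log x)^{-1}<|z|\le1$ then, using $\lambda_0<1$ so that $\tfrac{2\lambda_0}{\log x}<2\lambda_0|z|$, we get $(2-2\lambda_0)|z|\le|u|\le(2+2\lambda_0)|z|$, hence $-\log|u|=-\log|z|+O(1)$. Comparing with \eqref{Ldef} (and with the case $|z|>1$ from step (i)) gives $S=\mathcal L(z,x)+O(1)$ in all cases.

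The main difficulty will be step (i): the partial sums $\sum_{N(\varpi)\le x^{1/2}}N(\varpi)^{-1-\Re u}$ and $\sum_{N(\varpi)\le x^{1/2}}\log N(\varpi)\cdot N(\varpi)^{-1-\Re u}$ are each of size $\asymp\log x$, precisely because the ``natural length'' $e^{1/\Re u}$ barely exceeds $x^{1/2}$, so the cancellation against the accompanying factors $1/\log x$ is borderline and must be done with the error term of the prime ideal theorem kept explicit. The other point needing care is the large-$|\Im z|$ regime, where one needs a uniform $O(1)$ bound for $\sum_{N(\varpi)\le v}N(\varpi)^{-1-2i\Im z}$; this is where GRH for $\zeta_K$, if one wishes, can be substituted for the classical prime ideal theorem.
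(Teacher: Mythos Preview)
Your proof is correct and takes a genuinely different route from the paper's. The paper never passes to $\log\zeta_K(1+u)$; instead it works entirely with the truncated sum $f(w,x)=\sum_{N(\varpi)\le x^{1/2}}N(\varpi)^{-1-w}$, differentiates in $w$ (obtaining $\partial f/\partial w=-1/w+x^{-w/2}/w+O(1)$ via the prime ideal theorem), and then integrates along a contour from $w=1$ to the target point, breaking the path into horizontal and vertical segments to control the auxiliary term $x^{-w/2}/w$. Your approach is more conceptual: it identifies the answer as $-\log|u|$ (the distance to the pole of $\zeta_K$), and compactness plus non-vanishing of $\zeta_K$ on $\Re s=1$ replaces the contour integration. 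What the paper's route buys is that it never needs to sum the tail $\sum_{N(\varpi)>x^{1/2}}$ or invoke anything about $\zeta_K$ beyond the prime ideal theorem; what yours buys is a cleaner explanation of the three-case structure of $\mathcal L(z,x)$.

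Two small remarks. First, in your step~(ii) the compact region should be taken as $\{0\le\Re u,\ |u|\le 3\}$ rather than $|u|\le 2$, since for $x\ge 10$ and $|z|\le 1$ one only has $|u|\le 2+2\lambda_0/\log 10<2.5$; this is a trivial adjustment. Second, your treatment of the case $|z|>1$ asserts that $\sum_{N(\varpi)\le v}N(\varpi)^{-1-2i\Im z}=O(1)$ uniformly: the main term from $\mathrm{Li}$ is indeed $O(1/|\Im z|)$, but when you integrate the remainder $E(t)=\pi_K(t)-\mathrm{Li}(t)$ by parts the naive bound picks up a factor $|\Im z|$, so one needs a bit more care (e.g.\ using a smoothed cutoff, or an explicit formula) to get uniformity for arbitrarily large $|\Im z|$. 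The paper handles this case with the same one-line appeal to ``PIT and partial summation'', so you are on equal footing there; in the application (Proposition~\ref{propNbound}) one has $|\Im z|\le X$ and there is an $O(\log\log\log X)$ slack, so this is not a genuine obstacle.
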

\begin{proof}
   We first note that under GRH for $\zeta_K(s)$, the prime ideal theorem has the following form (see \cite[Theorem 5.15]{iwakow})
\begin{align}
\label{PIT}
 \sum_{\substack{ \varpi \equiv 1 \bmod {(1+i)^3} \\ N(\varpi) \leq y }}\log N(\varpi) = y+O(\sqrt{y} \left(\log Xy)^2 \right).
\end{align}

   It follows from this and $\Re (z)\geq 0$ that we have
 \begin{align*}
  \sum_{\substack{ \varpi \equiv 1 \bmod {(1+i)^3} \\ N(\varpi) \leq x^{1/2} }}
    \frac{1}{N(\varpi)^{1+\frac{2\lambda_0}{\log x}+2z}}
    \frac{\log N(\varpi)}{\log x} \ll \sum_{\substack{ \varpi \equiv 1 \bmod {(1+i)^3} \\ N(\varpi)\leq \sqrt{x}}}
    \frac{1}{N(\varpi)}
    \frac{\log N(\varpi)}{\log x} =O(1).
 \end{align*}
   We then deduce from this that it suffices to establish \eqref{primesquarebound} with the left-hand side expression in \eqref{primesquarebound} being replaced by $f(\frac{2\lambda_0}{\log x}+2z,x)$, where
\begin{align*}
  f(z,x)= \sum_{\substack{ \varpi \equiv 1 \bmod {(1+i)^3} \\ N(\varpi) \leq x^{1/2} }} \frac{1}{N(\varpi)^{1+z}}
\end{align*}

  It is easily seen that $f(\frac{2\lambda_0}{\log x}+2z,x)=O(1)$ when $|z| \geq 1$ using \eqref{PIT} and partial summation. The same procedure also implies that
\begin{align}
\label{sumprimerecip}
   \sum_{\substack{ \varpi \equiv 1 \bmod {(1+i)^3} \\ N(\varpi) \leq x^{1/2} }}
    \frac{1}{ N(\varpi)}=\log \log x+O(1).
\end{align}

   It follows that when $|z| \leq (\log x)^{-1}$, we have
\begin{align*}
  f(\frac{2\lambda_0}{\log x}+2z,x)=&\sum_{\substack{ \varpi \equiv 1 \bmod {(1+i)^3} \\ N(\varpi) \leq x^{1/2} }}
    \frac{1}{ N(\varpi)}+\sum_{\substack{ \varpi \equiv 1 \bmod {(1+i)^3} \\ N(\varpi) \leq x^{1/2} }}
    (\frac{1}{ N(\varpi)^{1+\frac{2\lambda_0}{\log x}+2z}}-\frac 1{N(\varpi)}) \\
=& \log \log x+\sum_{\substack{ \varpi \equiv 1 \bmod {(1+i)^3} \\ N(\varpi) \leq x^{1/2} }}\frac {\log N(\varpi)}{N(\varpi)} \int^{\frac{2\lambda_0}{\log x}+2z}_0 N(\varpi)^{u}du +O(1) \\
=& \log \log x+O(\frac 1{\log x}\sum_{\substack{ \varpi \equiv 1 \bmod {(1+i)^3} \\ N(\varpi) \leq x^{1/2} }}\frac {\log N(\varpi)}{N(\varpi)})  +O(1) \\
=& \log \log x+O(1),
 \end{align*}
  where the integral is along the line segment connecting the origin and the point $\frac{2\lambda_0}{\log x}+2z$ on the complex plane.

  In the remaining case when $(\log x)^{-1} \leq |z| \leq 1$, we note that by \eqref{PIT} and partial summation,
\begin{align*}
  \frac {\partial f}{\partial z}=-\sum_{\substack{ \varpi \equiv 1 \bmod {(1+i)^3} \\ N(\varpi) \leq x^{1/2} }} \frac{\log N(\varpi)}{N(\varpi)^{1+z}}=-\int^{\sqrt{x}}_1\frac 1{u^{1+z}}d(u+O(u^{1/2}(\log u)^2))
=-\frac {1}{z}+\frac {x^{-z/2}}{z}+O(1).
\end{align*}
   Now we assume that $0 \le \Re(w)$ and $|w| \geq (\log x)^{-1}$ and we integrate $\partial f/\partial z$ from $1$ to $w$ to see that
\begin{align*}
  f(w,x)=& -\log w+\int^w_1\frac {x^{-u/2}}{u}du+O(1)=-\log |w|-\frac { 2x^{-u/2}}{u \log x} \Big |^w_1-
2\int^w_1\frac {x^{-u/2}}{u^2 \log x}du +O(1) \\
=& -\log |w|-
2\int^w_1\frac {x^{-u/2}}{u^2 \log x}du +O(1).
\end{align*}
   We break the integration $\int^w_1\frac {x^{-u/2}}{u^2 \log x}du$ into two parts, one horizontal integration along the $x$-axis from $1$ to $\Re(w) \gg (\log x)^{-1}$, and the other vertical integration from $\Re(w)$ to $z$. The horizontal integration is
\begin{align*}
 \ll \int^{\infty}_{(\log x)^{-1}} \frac {1}{u^2 \log x}du=O(1).
\end{align*}
  If we write $w=\sigma+it$, then the vertical integration can be evaluated by breaking the integral over $t$ for $|t| \leq \Re(z)$ and $|t| > \Re(z)$. We obtain this way that the vertical integration is
\begin{align*}
 \ll \int^{\Re(z)}_{-\Re(z)} \frac {1}{\Re(z)^2 \log x}dt+ \int^{\infty}_{\Re(z)} \frac {1}{t^2 \log x}dt \ll \frac {1}{\Re(z) \log x} =O(1).
\end{align*}
  It follows that we have $f(w,x)=-\log |w|+O(1)$ when $0 \le \Re(w)$ and $|w| \geq (\log x)^{-1}$. In particular, this applies to the case when $w=\frac{2\lambda_0}{\log x}+2z$, thus completes the proof.
\end{proof}

   Lastly, we present a mean value estimation which will be applied to estimate the sum over primes in \eqref{logLupperbound} in our proof of Theorem \ref{upperbound}.
\begin{lemma}
\label{lem:2.5}
 Let $X$ and $y$ be real numbers. For fixed $0<\varepsilon<1$, let $k$ be a natural number with $y^k\leq X^{1/2-\varepsilon}$. Then for any complex numbers $a(\varpi)$, we have
 \begin{align*}
  \sumstar_{\substack{(d,2)=1 \\ N(d)\leq X}}\left|\sum_{\substack{\varpi \equiv 1 \bmod {(1+i)^3} \\ N(\varpi) \leq y}}\frac{a(\varpi)\chi_{(1+i)^5d}(\varpi)}{N(\varpi)^{\frac{1}{2}}}\right|^{2k}
  \ll_{\varepsilon} X\frac{(2k)!}{k!2^k}\left(\sum_{\substack{\varpi \equiv 1 \bmod {(1+i)^3} \\ N(\varpi) \leq y}} \frac{|a(\varpi)|^2}{N(\varpi)}\right)^k.
 \end{align*}
\end{lemma}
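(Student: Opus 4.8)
The plan is to follow the familiar route for mean value estimates of this kind (as in \cite{Sound01}): expand the $2k$-th power, execute the sum over $d$ via an estimate for the character sums $\sumstar_{(d,2)=1,\,N(d)\le X}\chi_{(1+i)^5d}(m)$, and separate a ``diagonal'' main term from a negligible ``off-diagonal'' part. First I would expand
\[
\Biggl|\sum_{\substack{\varpi\equiv 1\bmod{(1+i)^3}\\ N(\varpi)\le y}}\frac{a(\varpi)\chi_{(1+i)^5d}(\varpi)}{N(\varpi)^{1/2}}\Biggr|^{2k}
=\sum_{\substack{\varpi_1,\dots,\varpi_{2k}\equiv 1\bmod{(1+i)^3}\\ N(\varpi_i)\le y}}\frac{a(\varpi_1)\cdots a(\varpi_k)\,\overline{a(\varpi_{k+1})\cdots a(\varpi_{2k})}}{N(\varpi_1\cdots\varpi_{2k})^{1/2}}\,\chi_{(1+i)^5d}(\varpi_1\cdots\varpi_{2k}),
\]
using that $\chi_{(1+i)^5d}$ is real-valued, so $\overline{\chi_{(1+i)^5d}(\varpi)}=\chi_{(1+i)^5d}(\varpi)$, and completely multiplicative, so the product of the $2k$ character values equals $\chi_{(1+i)^5d}(m)$ with $m=\varpi_1\cdots\varpi_{2k}$. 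Interchanging the order of summation reduces matters to estimating $\sumstar_{(d,2)=1,\,N(d)\le X}\chi_{(1+i)^5d}(m)$, where $N(m)\le y^{2k}=(y^k)^2\le X^{1-2\varepsilon}$ by hypothesis. I would then split the tuples according to whether the ideal $(m)$ is a perfect square.

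When $(m)=(\ell)^2$, one has, since $m$ is odd, $\chi_{(1+i)^5d}(m)=\chi_{(1+i)^5d}(\ell)^2\in\{0,1\}$, whence $\sumstar_{(d,2)=1,\,N(d)\le X}\chi_{(1+i)^5d}(m)\le\sumstar_{(d,2)=1,\,N(d)\le X}1\ll X$; so these terms contribute $\ll X\sum_{\varpi_1\cdots\varpi_{2k}=\square}N(\varpi_1\cdots\varpi_{2k})^{-1/2}\,|a(\varpi_1)\cdots a(\varpi_{2k})|$. To evaluate this I would use the elementary fact that the $2k$-tuples $(\varpi_1,\dots,\varpi_{2k})$ for which $\varpi_1\cdots\varpi_{2k}$ is a perfect square are exactly those obtained, for some perfect matching of the $2k$ indices into $k$ pairs, by assigning equal primes to the two indices of each pair; since there are $(2k-1)!!=(2k)!/(2^kk!)$ such matchings and for each one the sum over the choices of primes in the $k$ pairs equals $\bigl(\sum_{N(\varpi)\le y}|a(\varpi)|^2/N(\varpi)\bigr)^k$, a union bound gives that the perfect-square terms contribute
\[
\ll X\,\frac{(2k)!}{2^kk!}\Bigl(\sum_{\substack{\varpi\equiv 1\bmod{(1+i)^3}\\ N(\varpi)\le y}}\frac{|a(\varpi)|^2}{N(\varpi)}\Bigr)^{k},
\]
which is precisely the asserted bound.

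It remains to show that the terms with $(m)$ not a perfect square are negligible, and this is the main obstacle. For such $m$, $\chi_{(1+i)^5d}(m)$ agrees, up to a factor encoding coprimality of $d$ with the square part of $m$, with $\leg{(1+i)^5d}{m^\flat}$, where $m^\flat\neq 1$ is the squarefree product of the primes dividing $m$ to an odd power; regarded as a function of $d$ this is a non-principal quadratic Hecke character of trivial infinite type modulo a divisor of $16m^\flat$. Passing from the sharp cutoff $N(d)\le X$ to a smooth weight (at the cost of an error controlled via \eqref{Gausscircle}) and invoking the Pólya--Vinogradov type bound \eqref{PV}, together with the Möbius device used to handle the squarefree restriction on $d$ exactly as in the derivation of \eqref{nnotsquare}, one bounds $\sumstar_{(d,2)=1,\,N(d)\le X}\chi_{(1+i)^5d}(m)$ by a quantity that, because $N(m^\flat)\le N(m)\le X^{1-2\varepsilon}$, stays a fixed power of $X$ below $X$. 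Summing that bound over the $2k$-tuples of primes and applying Cauchy--Schwarz together with $\#\{\varpi:\ N(\varpi)\le y\}\ll y$, the non-square contribution is seen to be $\ll X^{1-\delta}\bigl(\sum_{N(\varpi)\le y}|a(\varpi)|^2/N(\varpi)\bigr)^{k}$ for some $\delta>0$ depending on $\varepsilon$, hence absorbed into the main term since $(2k)!/(2^kk!)\ge 1$. The delicate features are the precise dependence of the character-sum bound on $N(m)$ and the way the hypothesis $y^k\le X^{1/2-\varepsilon}$ enters to keep $N(m)$ small enough for that bound to beat $X$, together with the routine but slightly technical reduction from the sharp to a smooth cutoff in the $d$-sum.
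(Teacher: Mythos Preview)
Your overall strategy matches the paper's: expand the $2k$-th power, execute the $d$-sum, separate square from non-square products, and count the square tuples by perfect matchings. The square case is handled correctly.

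There is, however, a genuine gap in your treatment of the non-square terms. You invoke the M\"obius device exactly as in the derivation of \eqref{nnotsquare}, which yields a character-sum bound of the shape $X^{1/2}N(m)^{1/4+\varepsilon'}$; you then assert that summing over tuples and applying Cauchy--Schwarz with $\#\{\varpi:N(\varpi)\le y\}\ll y$ gives a contribution $\ll X^{1-\delta}\bigl(\sum |a(\varpi)|^2/N(\varpi)\bigr)^k$. This last step does not go through. With your character-sum bound the non-square contribution is at most
\[
X^{1/2}\Bigl(\sum_{N(\varpi)\le y}\frac{|a(\varpi)|}{N(\varpi)^{1/4-\varepsilon'}}\Bigr)^{2k}
\ \le\ X^{1/2}\,y^{k(3/2+2\varepsilon')}\Bigl(\sum_{N(\varpi)\le y}\frac{|a(\varpi)|^2}{N(\varpi)}\Bigr)^{k},
\]
and since $y^k\le X^{1/2-\varepsilon}$ this is only $\ll X^{5/4-3\varepsilon/2+O(\varepsilon')}(\sum\cdots)^k$. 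Equivalently, even granting your $X^{1-\delta}$ uniform bound on the character sum (with $\delta\approx \tfrac14+\tfrac{\varepsilon}{2}$), the subsequent Cauchy--Schwarz costs an extra factor $y^k\le X^{1/2-\varepsilon}$, giving $X^{3/2-\delta-\varepsilon}(\sum\cdots)^k$, which exceeds $X(\sum\cdots)^k$ whenever $\varepsilon<1/6$. So the off-diagonal is \emph{not} absorbed for small $\varepsilon$, and the lemma is stated for all $0<\varepsilon<1$.

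The fix the paper uses is a one-line positivity trick you omit: since the summand is nonnegative, one first majorizes $\sumstar_{(d,2)=1,\,N(d)\le X}$ by $\sum_{(d,2)=1}(\cdots)W(N(d)/X)$ with a smooth compactly supported $W$, \emph{dropping the squarefree condition entirely}. After quadratic reciprocity \eqref{dchar}, the smoothed P\'olya--Vinogradov bound \eqref{PV} then gives $\sum_{(d,2)=1}\leg{(1+i)^5d}{m}W(N(d)/X)\ll N(m)^{1/2+\varepsilon'}$ with \emph{no} $X^{1/2}$ loss. Now the non-square part is
\[
\ll \Bigl(\sum_{N(\varpi)\le y}|a(\varpi)|\Bigr)^{2k} y^{2k\varepsilon'}
\ \le\ y^{2k(1+\varepsilon')}\Bigl(\sum_{N(\varpi)\le y}\frac{|a(\varpi)|^2}{N(\varpi)}\Bigr)^{k},
\]
and choosing $\varepsilon'$ small enough that $y^{2k(1+\varepsilon')}\ll X$ (possible since $y^{2k}\le X^{1-2\varepsilon}$) finishes the argument for every $\varepsilon>0$. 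Replace your M\"obius step by this positivity step and the rest of your sketch is fine.
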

\begin{proof}
   Let $W(t)$ be any non-negative smooth function that is supported on $(\frac 1{2}-\varepsilon_1, 1+\varepsilon_1)$ for some fixed small $0<\varepsilon_1<1/2$ such that $W(t) \gg 1$ for $t \in (\half, 1)$. We have that
\begin{align*}
 & \sumstar_{\substack{(d,2)=1 \\ N(d)\leq X}}\left|\sum_{\substack{\varpi \equiv 1 \bmod {(1+i)^3} \\ N(\varpi) \leq y}}\frac{a(\varpi)\chi_{(1+i)^5d}(\varpi)}{N(\varpi)^{\frac{1}{2}}}\right|^{2k} \ll \sum_{\substack{(d,2)=1}}\left|\sum_{\substack{\varpi \equiv 1 \bmod {(1+i)^3} \\ N(\varpi) \leq y}}\frac{a(\varpi)\chi_{(1+i)^5d}(\varpi)}{N(\varpi)^{\frac{1}{2}}}\right|^{2k}W(\frac {N(d)}{X}) \\
\ll & \sum_{\substack{(d,2)=1}}  \Big| \sum_{\substack{\varpi_1, \dots, \varpi_k \equiv 1 \bmod {(1+i)^3} \\ N(\varpi_1), \dots, N(\varpi_k) \leq y}} \frac{a(\varpi_1) \dots a(\varpi_k) \ }{\sqrt{N(\varpi_1 \dots \varpi_k)}}
\leg{(1+i)^5d}{\varpi_1\cdots \varpi_k} \Big|^2W(\frac {N(d)}{X}).
\end{align*}
  We further expand out the square in the last sum above and treat the sum over $d$ by applying \eqref{dchar} first and then using the smoothed version of P{\' o}lya-Vinogradov inequality \eqref{PV} for number fields when the product of the primes involved is not a perfect square to see that we have
\begin{align*}
 & \sum_{\substack{(d,2)=1}}  \Big| \sum_{\substack{\varpi_1, \dots, \varpi_k  \equiv 1 \bmod {(1+i)^3} \\ N(\varpi_1), \dots, N(\varpi_k) \leq y}} \frac{a(\varpi_1) \dots a(\varpi_k) \ }{\sqrt{N(\varpi_1 \dots \varpi_k)}}
\leg{(1+i)^5d}{\varpi_1\cdots \varpi_k} \Big|^2W(\frac {N(d)}{X}) \\
\ll &
X \sum_{\substack{\varpi_1, \dots, \varpi_{2k} \equiv 1 \bmod {(1+i)^3} \\ N(\varpi_1), \dots, N(\varpi_{2k}) \leq y \\ \varpi_1 \dots \varpi_{2k} = \square}} \frac{|a(\varpi_1) \dots a(\varpi_{2k}) |}{\sqrt{N(\varpi_1 \dots \varpi_{2k})}} + O\left(\sum_{\substack{\varpi_1, \dots, \varpi_{2k} \equiv 1 \bmod {(1+i)^3} \\ N(\varpi_1), \dots, N(\varpi_{2k}) \leq y}}|a(\varpi_1) \dots a(\varpi_{2k})|y^{2k\varepsilon'}   \right),
\end{align*}
  where we write $\square$ for a square of an element in $\mathcal{O}_K$.

We take $\varepsilon'$ small enough so that $y^{2k}y^{2k\varepsilon'} \ll X$. Then an argument similar to that in the proof of \cite[Lemma 6.3]{S&Y} leads to the  assertion of the lemma.
\end{proof}

\subsection{Completion of the proof}

   With lemmas \ref{lem: logLbound}-\ref{lem:2.5} now available, we proceed to establish an upper bound for the frequency of large values of $\log|L(\tfrac{1}{2}+z_1,\chi_{(1+i)^5d})L(\tfrac{1}{2}+z_2,\chi_{(1+i)^5d})|$.
\begin{prop}
\label{propNbound}
Assume GRH for $\zeta_K(s)$ and $L(s,\chi_{(1+i)^5d})$ for all odd, square-free $d \in \mathcal{O}_K$. Let $X$ be large and let $z_1,z_2 \in \mathbb{C}$ with $0 \leq \Re (z_1), \Re (z_2) \leq \frac{1}{\log X}$, and $|\Im (z_1)|, |\Im (z_2)| \leq X$. Let $\mathcal{N}(V;z_1,z_2,X)$ denote the number of
 odd, square-free $d \in \mathcal{O}_K$ such that $N(d)\leq X$ and
 \begin{align*}
  \log|L(\tfrac{1}{2}+z_1,\chi_{(1+i)^5d})L(\tfrac{1}{2}+z_2,\chi_{(1+i)^5d})|\geq
  V +\mathcal{M}(z_1,z_2,X).
 \end{align*}
 Then for $10\sqrt{\operatorname{log}\operatorname{log} X}\leq V \leq \mathcal{V}(z_1,z_2,X)$,  we have
 \begin{align*}
  \mathcal{N}(V;z_1,z_2,X)\ll X\operatorname{exp}\left(-\frac{V^2}{2\mathcal{V}(z_1,z_2,X)}
  \left(1-\frac{25}{\operatorname{log}\operatorname{log}\operatorname{log}X}\right)\right);
 \end{align*}
 for $\mathcal{V}(z_1,z_2,X)< V \leq \frac{1}{16}\mathcal{V}(z_1,z_2,X)\operatorname{log}\operatorname{log}\operatorname{log}X$,
 we have
 \begin{align*}
  \mathcal{N}(V;z_1,z_2,X)\ll X\operatorname{exp}\left(-\frac{V^2}{2\mathcal{V}(z_1,z_2,X)}
  \left(1-\frac{15V}{\mathcal{V}(z_1,z_2,X)\operatorname{log}\operatorname{log}\operatorname{log}X}\right)^2\right);
 \end{align*}
 for $\frac{1}{16}\mathcal{V}(z_1,z_2,X)\operatorname{log}\operatorname{log}
 \operatorname{log}X<V$, we have
 \begin{align*}
  \mathcal{N}(V;z_1,z_2,X)\ll X\operatorname{exp}\left(-\frac{1}{1025}V\operatorname{log}V\right).
 \end{align*}
\end{prop}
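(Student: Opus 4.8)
The plan is to adapt the now-standard Soundararajan--Harper--type moment argument (as in \cite{Sound01}, \cite{S&Y}, \cite{Shen}) to the Gaussian-field setting, using the tools already assembled above. First I would fix the parameters: set $x = X^{A/V}$ with $A$ chosen appropriately (so that $x$ is a small power of $X$, ensuring the mean-value estimate Lemma \ref{lem:2.5} applies with a suitable power), and apply Lemma \ref{lem: logLbound} at $s = \tfrac12 + z_j$ with $m = (1+i)^5 d$, $T = X$, and $\lambda$ chosen near $\lambda_0$. Since $\log N((1+i)^5 d) \ll \log X$ and $\Re(z_j) \le 1/\log X$, the terms $(\log T + \tfrac12 \log N(m))(\tfrac12 - \sigma + \tfrac{1+\lambda}{\log x})$ contribute $O(\log X / \log x) = O(V/A)$, which is harmless. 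This reduces bounding $\log|L(\tfrac12+z_1,\chi_{(1+i)^5d})L(\tfrac12+z_2,\chi_{(1+i)^5d})|$ to bounding a short Dirichlet polynomial over prime powers $N(n) \le x$, to which I add the contribution of prime squares. By Lemma \ref{lem: primesquareLbound}, the prime-square part of the bound for $\log|L(\tfrac12+z_j,\cdot)|$ contributes $\tfrac12 \mathcal{L}(2z_j,x) + O(1)$ and similarly the cross terms coming from $z_1 \pm z_2$ and $\Re z_j$ assemble, after using $x = X^{A/V}$ so that $\log\log x = \log\log X + O(1)$, into exactly $\tfrac12\mathcal{V}(z_1,z_2,X)$ plus lower-order terms; meanwhile the linear (degree-one prime) part contributes the mean $\mathcal{M}(z_1,z_2,X)$. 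Hence on the set counted by $\mathcal{N}(V;z_1,z_2,X)$, the real part of the prime-linear Dirichlet polynomial
\[
  \mathcal{D}(d) = \Re \sum_{\substack{\varpi \equiv 1 \bmod (1+i)^3 \\ N(\varpi) \le x}} \frac{\Lambda_{[i]}(\varpi)}{N(\varpi)^{1/2 + \lambda_0/\log x}\log N(\varpi)}\Big(\tfrac{1}{N(\varpi)^{z_1}} + \tfrac{1}{N(\varpi)^{z_2}}\Big)\frac{\log(x/N(\varpi))}{\log x}
\]
must be at least $V - \tfrac12 \mathcal{V}(z_1,z_2,X) - o(V)$, and in the regime $V \le \mathcal{V}(z_1,z_2,X)$ it must be at least, say, $V/2$.

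Next I would split into the three ranges of $V$ exactly as in the classical argument. In each range one picks a parameter $\ell$ (an integer, growing with $V$) and truncates $\mathcal{D}(d)$ further into pieces $\mathcal{D}_j(d)$ supported on $N(\varpi)$ in dyadic-type blocks, so that each block can be raised to a power $2\ell_j$ with $x^{2\ell_j}$ comfortably below $X^{1/2-\varepsilon}$; then Lemma \ref{lem:2.5} (with $a(\varpi)$ the relevant coefficients) gives
\[
  \sumstar_{\substack{(d,2)=1 \\ N(d)\le X}} |\mathcal{D}_j(d)|^{2\ell_j} \ll X \frac{(2\ell_j)!}{\ell_j! 2^{\ell_j}}\Big(\sum_{N(\varpi)\le y_j} \frac{|a(\varpi)|^2}{N(\varpi)}\Big)^{\ell_j},
\]
and the prime sum $\sum |a(\varpi)|^2/N(\varpi)$ over the relevant block is controlled by \eqref{sumprimerecip} (Mertens for $K$) to be at most $\mathcal{V}(z_1,z_2,X) + O(1)$ in total. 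Chebyshev's inequality applied to $|\mathcal{D}(d)|^{2\ell}$ (splitting via Hölder among the blocks if one uses the finer decomposition) then yields $\mathcal{N}(V;z_1,z_2,X) \ll X (C\ell \mathcal{V}/V^2)^{\ell}$, and optimizing $\ell$: in the first range take $\ell = \lfloor V^2/\mathcal{V} \rfloor$ to get the Gaussian-type bound with the $(1 - 25/\log\log\log X)$ correction absorbing the $O(1)$ errors and the loss from $x$ being a power rather than $X$ itself; in the middle range the constraint $x^{2\ell} \le X^{1/2-\varepsilon}$ forces $\ell \asymp \mathcal{V}\log\log\log X / V$, producing the squared factor $(1 - 15V/(\mathcal{V}\log\log\log X))^2$; in the last range one simply takes $\ell \asymp V$, giving the crude $\exp(-\tfrac{1}{1025}V\log V)$ bound.

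The main obstacle — the step requiring genuine care rather than translation — is the bookkeeping that shows the prime-square and cross terms in Lemma \ref{lem: logLbound}, after invoking Lemma \ref{lem: primesquareLbound}, recombine precisely into $\tfrac12\mathcal{V}(z_1,z_2,X)$ as defined in \eqref{Vdef}, and that all the error terms (the $O(1/\log x) = O(V/A\log X)$ from Lemma \ref{lem: logLbound}, the $O(1)$ in Lemma \ref{lem: primesquareLbound}, the discrepancy between $\log\log x$ and $\log\log X$, and the loss in Lemma \ref{lem:2.5} from taking $x$ a small power of $X$) are all dominated by the correction factors $25/\log\log\log X$ and $15V/(\mathcal{V}\log\log\log X)$ stated in the conclusion. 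This is where one must choose $A$ (hence $x$) and the block sizes $y_j$ with the exponents $\ell_j$ carefully, exactly mirroring the choices in \cite[\S2]{Shen} and \cite[\S6]{S&Y}; once that is done, the Gaussian-field inputs (\eqref{PV}, \eqref{PIT}, \eqref{sumprimerecip}, and Lemma \ref{lem:2.5}) are drop-in replacements for their $\mathbb{Q}$ analogues and the rest of the argument goes through verbatim. I would also note that the hypothesis $|\Im(z_j)| \le X$ is used precisely to allow the choice $T = X$ in Lemma \ref{lem: logLbound}, so that the $\log T$ term is of size $\log X$ and contributes at the same (negligible) order as the conductor term.
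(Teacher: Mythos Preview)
Your overall strategy is right and matches the paper: apply Lemma~\ref{lem: logLbound} with $T=X$ and $x=X^{A/V}$, separate prime and prime-square contributions, handle the latter via Lemma~\ref{lem: primesquareLbound}, then bound the remaining prime sum by high moments (Lemma~\ref{lem:2.5}) and Chebyshev. The paper also splits the prime sum into two pieces $M_1$ (over $N(\varpi)\le z=x^{1/\log\log X}$) and $M_2$ (over $z<N(\varpi)\le x$), rather than the finer Harper-style block decomposition you sketch; the two-piece split is already enough here.

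However, you have a genuine error in the bookkeeping of $\mathcal{M}$ versus $\mathcal{V}$, and it is not merely cosmetic. The prime-square contribution is \emph{not} $\tfrac12\mathcal{V}(z_1,z_2,X)$; it is $\mathcal{M}(z_1,z_2,X)$. Since $\chi_{(1+i)^5d}(\varpi^2)=1$ for $\varpi\nmid 2d$, the prime-square sum is (up to $O(\log\log\log X)$ from primes dividing $d$) the deterministic quantity
\[
\Re\sum_{N(\varpi)\le x^{1/2}}\frac{1}{2N(\varpi)^{1+2\lambda_0/\log x}}\bigl(N(\varpi)^{-2z_1}+N(\varpi)^{-2z_2}\bigr)\frac{\log(x/N(\varpi)^2)}{\log x},
\]
and Lemma~\ref{lem: primesquareLbound} applied with $z=z_1$ and $z=z_2$ (note the statement already has the factor $2z$ in the exponent) gives exactly $\tfrac12\mathcal{L}(z_1,x)+\tfrac12\mathcal{L}(z_2,x)=\mathcal{M}(z_1,z_2,x)$, not $\tfrac12\mathcal{L}(2z_j,x)$ and certainly no cross terms in $z_1\pm z_2$. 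The quantity $\mathcal{V}$ enters \emph{only} as the variance $\sum_{N(\varpi)\le z}|a(\varpi)|^2/N(\varpi)$ in the moment estimate, where $a(\varpi)$ carries the factor $\Re(N(\varpi)^{-z_1}+N(\varpi)^{-z_2})$; expanding $|a(\varpi)|^2$ is what produces the six $\mathcal{L}$-terms in \eqref{Vdef}. The prime-linear part $\mathcal{D}(d)$ has mean essentially zero over $d$ (it is a genuine character sum), not $\mathcal{M}$.

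This matters for the final bound. With the correct identification, on the set counted by $\mathcal{N}(V;z_1,z_2,X)$ one deduces $M_1+M_2\ge V(1-6/A)-O(\log\log\log X)$, i.e.\ the prime sum is at least $V_1\approx V$. Your version would only give $\mathcal{D}(d)\ge V-\tfrac12\mathcal{V}$, which in the range $V\le\mathcal{V}$ is merely $\ge V/2$; feeding that into Chebyshev with variance $\mathcal{V}$ yields $\exp(-V^2/(8\mathcal{V}))$ rather than the required $\exp\bigl(-V^2/(2\mathcal{V})(1-o(1))\bigr)$. So the swapped roles of $\mathcal{M}$ and $\mathcal{V}$ must be fixed before the numerics in the three ranges can come out as stated.
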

\begin{proof}
Let $\lambda_0$ be the constant defined in Lemma \ref{lem: logLbound} and apply this Lemma with $\lambda=\lambda_0+\Re(z_i)\log x, i=1,2$ and $T=X$ to $L(\tfrac{1}{2}+z_i,\chi_{(1+i)^5d})$ for $N(d) \leq X$, we see that for $2 \leq x \leq X$,
\begin{align*}
\log |L(\tfrac{1}{2}+z_i,\chi_{(1+i)^5d})| & \leq \Re \left( \sum_{\substack{n \equiv 1 \bmod {(1+i)^3} \\ N(n) \leq x}} \frac{ \Lambda_{[i]} (n) \chi_{(1+i)^5d} (n)}{N(n)^{\frac{1}{2}+\frac{\lambda_0}{\log x}+z_i} \log N(n)} \frac{\log (\frac{x}{N(n)})}{\log x} \right)+ \frac 3{2}(1+\lambda_0)\frac{\log X}{\log x} +O \left (\frac{1}{\log x} \right ), \  i=1,2.
\end{align*}
  We then deduce that
\begin{align}
&\log  |L(\tfrac{1}{2}+z_1 , \chi_{(1+i)^5d})||L(\tfrac{1}{2}+z_2,\chi_{(1+i)^5d})|  \nonumber \\
 &\leq \Re \left( \sum_{\substack{ \varpi \equiv 1 \bmod {(1+i)^3} \\ N(\varpi^l) \leq x \\ l \geq 1}} \frac{\chi_{(1+i)^5d} (\varpi^l)}{l N(\varpi)^{l(\tfrac{1}{2}+\frac{\lambda_0}{\log x})}}  (N(\varpi)^{-lz_1} + N(\varpi)^{-lz_2}) \frac{\log (\frac{x}{N(\varpi)^l})}{\log x} \right)+ 3 (1+\lambda_0)\frac{\log X}{\log x} +O \left (\frac{1}{\log x} \right ).
\label{equ:3.1}
\end{align}

The terms with $l \geq 3$ in the the above sum contribute $O(1)$. Using the fact $\displaystyle \sum_{\varpi |d} \frac{1}{N(\varpi)} \ll \log\log\log N(d)$,  we deduce from Lemma \ref{lem: primesquareLbound} that
\begin{align*}
\begin{split}
&\Re  \left( \sum_{\substack{ \varpi \equiv 1 \bmod {(1+i)^3}\\ N(\varpi) \leq x^{1/2} }}  \frac{\chi_{(1+i)^5d}(\varpi^2)}{2 N(\varpi)^{1+\frac{2\lambda_0}{\log x}}}  (N(\varpi)^{-2z_1} + N(\varpi)^{-2z_2}) \frac{\log (\frac{x}{N(\varpi)^2})}{\log x} \right) \\
=& \Re  \left( \sum_{\substack{ \varpi \equiv 1 \bmod {(1+i)^3} \\ N(\varpi) \leq x^{1/2} }}  \frac{1}{2 N(\varpi)^{1+\frac{2\lambda_0}{\log x}}}  (N(\varpi)^{-2z_1} + N(\varpi)^{-2z_2}) \frac{\log (\frac{x}{N(\varpi)^2})}{\log x} \right) \\
=&  \mathcal{M}(z_1,z_2,x)+ O(\log\log\log X) \\
\leq &  \mathcal{M}(z_1,z_2,X)+ O(\log\log\log X),
\end{split}
\end{align*}
  where $\mathcal{M}(z_1,z_2,x)$ is defined as in \eqref{Mdef}.

 Applying the above estimation in  \eqref{equ:3.1}, we obtain  that
 \begin{align}
\label{equ:3.3}
\begin{split}
 & \log  |L(\tfrac{1}{2}+z_1 , \chi_{(1+i)^5d})||L(\tfrac{1}{2}+z_2,\chi_{(1+i)^5d})| \\
    \leq & \Re \left( \sum_{\substack{ \varpi \equiv 1 \bmod {(1+i)^3} \\ N(\varpi) \leq x}} \frac{\chi_{(1+i)^5d} (\varpi)}{N(\varpi)^{\frac{1}{2}+\frac{\lambda_0}{\log x}}}  (N(\varpi)^{-z_1} +   N(\varpi)^{-z_2}) \frac{\log (\frac{x}{N(\varpi)})}{\log x} \right)
    +\mathcal{M}(z_1,z_2,X)+
    \frac{5 \log X}{\log x} + O(\log \log \log X).
\end{split}
 \end{align}
   By taking $x = \log X$ in (\ref{equ:3.3}) and bounding the sum  over $\varpi$ in (\ref{equ:3.3}) trivially (with the help of \eqref{PIT}), we see that $\mathcal{N} (V; z_1,z_2,X) =0$  for $V > \frac{6\log X}{\log \log X}  $. Thus, we can assume $V \leq   \frac{6\log X}{\log \log X}$.

  In what follows, we shall denote $\mathcal{V}$ for $\mathcal{V} (z_1,z_2,X)$ defined in \eqref{Vdef} and we note that $\log\log X + O(1) \leq \mathcal{V}(z_1,z_2,x) \leq 4\log\log X+O(1)$. We now set $x=X^{A/V}$ with
\begin{align*}
 A =\left\{
 \begin{array}
  [c]{ll}
  \frac{1}{2}\log\log\log X & 10 \sqrt{\log \log X}\leq V\leq \mathcal{V},\\
  \frac{\mathcal{V}}{2V}\log\log\log X & \mathcal{V}<V\leq \frac{1}{16}\mathcal{V}\log\log\log X,\\
  8 & V > \frac{1}{16}\mathcal{V}\log\log\log X.
 \end{array}
 \right.
\end{align*}

  We further denote $z=x^{1/\log \log X}$, $M_1$ for the real part of the sum in \eqref{equ:3.3} truncated to $N(\varpi) \leq z$,  $M_2$ for the real part of the sum in \eqref{equ:3.3} over $z < N(\varpi) \leq x$. We then deduce that
\[
 \log  |L(\tfrac{1}{2}+z_1 , \chi_{(1+i)^5d})||L(\tfrac{1}{2}+z_2,\chi_{(1+i)^5d})|
    \leq M_1 + M_2 + \mathcal{M}(z_1,z_2,X) + \frac{5V}{A}.
\]
It follows from this that if  $\log  |L(\tfrac{1}{2}+z_1 , \chi_{(1+i)^5d})||L(\tfrac{1}{2}+z_2,\chi_{(1+i)^5d})| \geq V +  \mathcal{M} (z_1,z_2,X)$, then we have either
\[
M_2 \geq \tfrac{V}{A}, \text{ or } M_1 \geq V_1 := V(1-\tfrac{6}{A}).
\]

Now, we define
\begin{align*}
\operatorname{meas}(X;M_1) & = \# \{N(d) \leq X \ : \ d \text{ odd and square-free, } M_1 \geq V_1 \},\\
\operatorname{meas}(X;M_2) & = \# \{N(d) \leq X \ : \ d \text{ odd and square-free, } M_2 \geq \tfrac{V}{A}  \}.
\end{align*}

We then take $m = [0.9(\frac{V}{2A})]$ to see that by Lemma \ref{lem:2.5}, we have
\begin{align*}
 (V/A)^{2m} \operatorname{meas}(X;S_2) \leq  \sumstar_{\substack{(d,2)=1 \\ N(d)\leq X}}|M_2|^{2m} \ll X\frac{(2m)!}{m!2^m}\left(\sum_{\substack{\varpi \equiv 1 \bmod{(1+i)^3} \\ z<N(\varpi) \leq x}}
\frac{4}{N(\varpi)}\right)^m \ll X (3m \log \log \log X)^m,
\end{align*}
  where the last estimation above follows from \eqref{sumprimerecip} and Stirling's formula (see \cite[(5.112)]{iwakow}), which implies that
\begin{align*}
 \frac{(2m)!}{m!2^m} \ll (\frac {2m}{e})^m.
\end{align*}

   We then deduce that
\begin{equation}
\label{equ:bd-S-2}
\operatorname{meas}(X;M_2) \ll X\operatorname{exp}\left(-\frac{V}{3A}\log V\right).
\end{equation}

Next, we estimate $\operatorname{meas}(X;M_1)$. For any $m \leq \frac{(\frac{1}{2}-\varepsilon)\log X}{ \log z}$, we obtain using Lemma \ref{lem:2.5} that
\begin{align}
\label{equ:3.4}
V^{2m}_1\operatorname{meas}(X;M_1) \leq  \sumstar_{\substack{(d,2)=1 \\ N(d)\leq X}}|M_1|^{2m} \ll X\frac{(2m)!}{m!2^m}\left(\sum_{\substack{\varpi \equiv 1 \bmod{(1+i)^3} \\ N(\varpi) \leq z}}\frac{|a(\varpi)|^2}{N(\varpi)}\right)^m,
\end{align}
where
\begin{align*}
 a(\varpi)=\frac{\Re(N(\varpi)^{-z_1}+N(\varpi)^{-z_2})\log (\frac{x}{N(\varpi)})}{N(\varpi)^{\frac{\lambda_0}{\log x}}\log x}.
\end{align*}
By arguing as in the proof of Lemma \ref{lem: primesquareLbound}, we see that
\begin{align*}
\sum_{\substack{\varpi \equiv 1 \bmod{(1+i)^3} \\ N(\varpi) \leq z}}\frac{|a(\varpi)|^2}{N(\varpi)}
\leq \frac{1}{4}\sum_{\substack{\varpi \equiv 1 \bmod{(1+i)^3} \\ N(\varpi) \leq z}}\frac{1}{N(\varpi)}(N(\varpi)^{-z_1}+N(\varpi)^{-\overline{z_1}}+N(\varpi)^{-z_2}+N(\varpi)^{-\overline{z_2}})^2
=\mathcal{V}+O(1).
\end{align*}
Combining with (\ref{equ:3.4}), this implies that
\begin{align*}
\operatorname{meas}(X ; M_1)
\ll X V_1^{-2m} \frac{(2m)!}{m!2^m} (\mathcal{V} + O(1))^m
\ll X\left(\frac{2m}{e} \cdot \frac{\mathcal{V}+O(1)}{V_1^2}\right)^m.
\end{align*}

We now take $m = [\frac{V_1^2} { 2\mathcal{V}}]$ when $V \leq (\log \log X)^2$ and $ m = [10V]$ otherwise to see that in either case, we have for $X$ large,
\begin{align*}
m \leq \frac{(\frac{1}{2}-0.1)\log X}{ \log z}.
\end{align*}

  A little calculation then shows that
\begin{align*}
\operatorname{meas}(X ; M_1) \ll  X\operatorname{exp}\left(-\frac{V_1^2}{2\mathcal{V}}\left(1+O\left(\frac{1}{\log\log X}\right)\right)\right)
+X\operatorname{exp}\left(-V\log V \right).
\end{align*}

  We then deduce from the above and \eqref{equ:bd-S-2} that
\begin{align*}
\mathcal{N}(V;z_1,z_2,X) \ll X\operatorname{exp}\left(-\frac{V_1^2}{2\mathcal{V}}\left(1+O\left(\frac{1}{\log\log X}\right)\right)\right)
+X\operatorname{exp}\left(-V\log V\right)+X\operatorname{exp}\left(-\frac{V}{3A}\log V\right).
\end{align*}

  It is then easy to check that this leads to the assertion of the proposition.
\end{proof}

  We now return to the proof of Theorem \ref{upperbound} by noting that for $k$ given as in the theorem, Proposition \ref{propNbound} implies that for all $V \geq 10\sqrt{\log \log X}$,
\begin{align}
 \label{equ:rough-01}
  \mathcal{N}(V;z_1,z_2,X)\ll
\begin{cases}
  X, \quad V < 10\sqrt{\log \log X}, \\
  X(\operatorname{log}X)^{o(1)}\operatorname{exp}\left(-\frac{V^2}{2\mathcal{V}(z_1,z_2,X)}\right), \quad 10\sqrt{\log \log X} \leq V \leq 4k \mathcal{V}(z_1,z_2,X), \\
   X(\operatorname{log}X)^{o(1)}\operatorname{exp}(-4kV), \quad  V > 4k \mathcal{V}(z_1,z_2,X).
\end{cases}
\end{align}
  Note further that we have
\begin{align*}
  \sumstar_{\substack{(d,2)=1 \\ N(d)\leq X}} |L(\tfrac{1}{2}+z_1 , \chi_{(1+i)^5d})||L(\tfrac{1}{2}+z_2,\chi_{(1+i)^5d})|^k =& -\int_{-\infty}^{+\infty}\operatorname{exp}(kV+k\mathcal{M}(z_1,z_2,X))d \mathcal{N}(V;z_1,z_2,X) \\
  =& k\int_{-\infty}^{+\infty}\operatorname{exp}(kV+k\mathcal{M}(z_1,z_2,X))\mathcal{N}(V;z_1,z_2,X)dV.
\end{align*}

  Now the assertion of Theorem \ref{upperbound} follows by supplying the bound given in \eqref{equ:rough-01} to evaluate the integration above.

\section{Proof of Theorems \ref{main-thm} and \ref{theo:mainthm} }
\label{sec: pfmainthm}

\subsection{Initial treatment}

  Let $\Phi$ be a smooth Schwartz class function  which is compactly supported on $[\frac{1}{2}, \frac{5}{2}]$ satisfying $0 \leq  \Phi(t) \leq 1$ for all $t$. We apply the  approximate functional equation \eqref{fcneqnL} to see that
\begin{align*}
 \sumstar_{(d,2)=1} L(\thalf, \chi_{(1+i)^5d})^4 \Phi\leg{N(d)}{X}
=\sumstar_{(d,2)=1}A_{N(d)}(d)^2\Phi\leg{N(d)}{X},
\end{align*}
where
\begin{align}
\label{equ:13.1}
A_{t}(d) = 2\sum_{\substack{n \equiv 1 \bmod {(1+i)^3}}} \frac{\chi_{(1+i)^5d}(n)d_{[i]}(n)}{N(n)^{\frac{1}{2}}} V
\left(\frac{ N(n)}{t} \right).
\end{align}
 For two parameters $U_1,U_2$ satisfying $X^{\frac{9}{10}} \leq U_1 \leq U_2 \leq X $, we define
\begin{align}
\label{equ:13.1+}
S(U_1,U_2)=\sumstar_{(d,2)=1} A_{U_1}(d) A_{U_2}(d) \Phi\leg{N(d)}{X}.
\end{align}

  We let
\begin{align}
\label{equ:def-h}
h(x,y,z) =  \Phi\leg{N(x)}{X}V
\left(\frac{ N(y)}{U_1} \right)V
\left(\frac{ N(z)}{U_2} \right).
\end{align}
  Then applying (\ref{equ:13.1}) to (\ref{equ:13.1+}) and using  the M\"obius inversion to remove the square-free condition in (\ref{equ:13.1+}), we obtain that
\begin{align*}
\begin{split}
S(U_1,U_2)=& 4\sideset{}{^*}\sum_{(d,2)=1}\sum_{\substack{n_1, n_2 \equiv 1 \bmod {(1+i)^3}}} \frac{\chi_{(1+i)^5d}(n_1n_2)d_{[i]}(n_1)d_{[i]}(n_2)}{N(n_1n_2)^{\frac{1}{2}}}h(d,n_1,n_2) \\
=& 4\sum_{(d,2)=1} \sum_{\substack{a \equiv 1 \bmod {(1+i)^3} \\ a^2|d}} \mu_{[i]}(a) \sum_{\substack{n_1, n_2 \equiv 1 \bmod {(1+i)^3}}} \frac{\chi_{(1+i)^5d}(n_1n_2)d_{[i]}(n_1)d_{[i]}(n_2)}{N(n_1n_2)^{\frac{1}{2}}}h(d,n_1,n_2) \\
=& 4\sum_{\substack{a \equiv 1 \bmod {(1+i)^3} }} \mu_{[i]}(a)  \sum_{(d,2)=1} \sum_{\substack{n_1, n_2 \equiv 1 \bmod {(1+i)^3}\\ (n_1n_2,a)=1}} \frac{\chi_{(1+i)^5d}(n_1n_2)d_{[i]}(n_1)d_{[i]}(n_2)}{N(n_1n_2)^{\frac{1}{2}}}h(a^2d,n_1,n_2).
\end{split}
\end{align*}
 Now we separate the terms with $N(a) \leq Y$ and with $N(a) > Y$ for some $Y \leq X$ to be chosen later, writing $S(U_1,U_2)=S_1 +S_2$, respectively.   We bound $S_2$ first in the following result. 
\begin{lemma}
\label{lem:13.1}
Unconditionally, we have  $S_2 \ll X^{1+\varepsilon}Y^{-1}$. Under GRH, we have $S_2 \ll   XY^{-1} (\log X)^{46}$.
\end{lemma}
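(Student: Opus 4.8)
The plan is to exploit the fact that, as $\Phi$ is supported on $[\tfrac12,\tfrac52]$, the inner sum over $d$ in $S_2$ runs only over the $\ll X/N(a)^2$ values with $N(a^2d)\asymp X$; this extra smallness, combined with $\sum_{N(a)>Y}N(a)^{-2}\ll Y^{-1}$, will be the source of the factor $Y^{-1}$.

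First I would unfold $S_2$. Since $(n_1n_2,a)=1$ we have $\chi_{(1+i)^5a^2d}(n)=\chi_{(1+i)^5d}(n)$, and $h(a^2d,n_1,n_2)=\Phi(N(a^2d)/X)\,V(N(n_1)/U_1)\,V(N(n_2)/U_2)$ factorizes, so that with
\[
 \mathcal{A}_j(a,d)=\sum_{\substack{n\equiv 1\bmod (1+i)^3\\ (n,a)=1}}\frac{\chi_{(1+i)^5d}(n)\,d_{[i]}(n)}{N(n)^{1/2}}\,V\Big(\frac{N(n)}{U_j}\Big),\qquad j=1,2,
\]
one has $S_2=4\sum_{N(a)>Y}\mu_{[i]}(a)\sum_{(d,2)=1}\Phi(N(a^2d)/X)\,\mathcal{A}_1(a,d)\,\mathcal{A}_2(a,d)$. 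Since the quadratic symbol is real-valued the $\mathcal{A}_j(a,d)$ are real, and by the triangle inequality together with $2|xy|\le x^2+y^2$ the problem reduces to proving, for each $a$ with $N(a)>Y$ and each $j\in\{1,2\}$, that
\[
 \mathcal{T}_a(j):=\sum_{(d,2)=1}\Phi\Big(\frac{N(a^2d)}{X}\Big)\,\mathcal{A}_j(a,d)^2
\]
satisfies $\mathcal{T}_a(j)\ll (X/N(a)^2)^{1+\varepsilon}$ unconditionally, and $\mathcal{T}_a(j)\ll (X/N(a)^2)(\log X)^{46}$ under GRH; summing over $a$ then yields both assertions of the lemma.

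To estimate $\mathcal{T}_a(j)$ I would pass back to the $L$-function. By \eqref{eq:Vdef},
\[
 \mathcal{A}_j(a,d)=\frac{1}{2\pi i}\int_{(2)}w_2(s)\,U_j^s\,L(\tfrac12+s,\chi_{(1+i)^5d})^2\,B_a(\tfrac12+s;d)\,\frac{ds}{s},
\]
where $B_a(w;d)=\prod_{\varpi\mid a}(1-\chi_{(1+i)^5d}(\varpi)N(\varpi)^{-w})^2$; shifting the contour to $\Re s=1/\log X$ (the only pole of the integrand, at $s=0$, is not crossed) and applying Cauchy--Schwarz to the $s$-integral gives
\[
 \mathcal{A}_j(a,d)^2\ll (\log X)\int_{(1/\log X)}|w_2(s)|\,\big|L(\tfrac12+s,\chi_{(1+i)^5d})\big|^4\,|B_a(\tfrac12+s;d)|^2\,\frac{|ds|}{|s|}.
\]
Because $d$ need not be square-free, I would write $d=e^2f$ with $f$ square-free and use $L(w,\chi_{(1+i)^5d})=L(w,\chi_{(1+i)^5f})\prod_{\varpi\mid e,\ \varpi\nmid f}(1-\chi_{(1+i)^5f}(\varpi)N(\varpi)^{-w})$, so that on $\Re s=1/\log X$ one has $|L(\tfrac12+s,\chi_{(1+i)^5d})|^4|B_a(\tfrac12+s;d)|^2\ll |L(\tfrac12+s,\chi_{(1+i)^5f})|^4\prod_{\varpi\mid ae}(1+N(\varpi)^{-1/2})^4$. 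Summing over $d$ — equivalently, over $e$ and over square-free $f$ with $N(e^2f)\asymp X/N(a)^2$ — and interchanging with the $s$-integral, the key input becomes the fourth-moment bound for $\sumstar_{N(f)\le F}|L(\tfrac12+s,\chi_{(1+i)^5f})|^4$: this is $\ll F^{1+\varepsilon}(1+|\Im s|^2)^{1+\varepsilon}$ unconditionally by Lemma \ref{lem:2.3}, and $\ll F(\log X)^{10+\varepsilon}$ uniformly for $|\Im s|\le X$ under GRH by Corollary \ref{cor: upperbound} (or by Corollary \ref{4momentupperbound} with $z_1=z_2=s$); for $|\Im s|>X$ the exponential decay of $w_2$ makes the contribution negligible. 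The series $\sum_e N(e)^{-2}\prod_{\varpi\mid e}(1+N(\varpi)^{-1/2})^4$ and the $s$-integral both converge, and collecting the estimates gives $\mathcal{T}_a(j)\ll (X/N(a)^2)^{1+\varepsilon}$ in general and $\mathcal{T}_a(j)\ll (X/N(a)^2)(\log X)^{C}$ under GRH; keeping track of every logarithmic factor produced yields the admissible value $C=46$.

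I expect the main difficulty to be bookkeeping rather than any new idea: one must control with care the passage from the possibly imprimitive character $\chi_{(1+i)^5d}$ to the primitive $\chi_{(1+i)^5f}$ and the attendant Euler-factor corrections (so that the sums over $e$ and over $\varpi\mid a$ genuinely converge and cost no power of $X$), the uniformity of the fourth-moment input over the full shift range $|\Im s|\le X$, and the exact exponent of $\log X$ in the GRH bound. An alternative route for $\mathcal{T}_a(j)$, closer to the treatments of Shen and of Soundararajan--Young, is to expand $\mathcal{A}_j(a,d)^2$ and apply the Poisson summation formula of Lemma \ref{Poissonsumformodd} in the variable $d$: the $k=0$ term contributes a diagonal main term of size $\ll (X/N(a)^2)(\log X)^{O(1)}$, while the off-diagonal terms, expressed through the Gauss sums $g(k,n_1n_2)$ and evaluated by Lemma \ref{Gausssum}, are controlled by the P\'olya--Vinogradov bound \eqref{PV} unconditionally and by GRH bounds for the associated twisted $L$-functions in the remaining case.
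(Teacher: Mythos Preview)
Your approach is correct and is essentially the paper's: pass to a primitive conductor by writing $d$ as a square times a square-free part, express the Dirichlet polynomial via a contour integral of $L^2$, shift to $\Re(s)=1/\log X$, and feed in the fourth-moment bounds (Lemma~\ref{lem:2.3} unconditionally, Corollary~\ref{4momentupperbound} under GRH) to recover the factor $Y^{-1}$ from the tail $N(a)>Y$. The only organizational difference is that the paper sets $c=ab$ (grouping $a$ with the square part of $d$), works directly with the double integral in $u,v$ and the bound $|L_c^2L_c^2|\ll d_{[i]}^2(c)(|L|^4+|L|^4)$, and finishes with $\sum_{N(c)>Y}d_{[i]}^5(c)/N(c)^2\ll Y^{-1}(\log X)^{31}$, whereas you keep $a$ and $e$ separate and Cauchy--Schwarz a single $s$-integral; the inputs and mechanism are the same.
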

\begin{proof}
We first write $d=lb^2$ with $l$ square-free and $b$ primary. We then let $c=ab$ and apply the definition of $h(x,y,z)$ in \eqref{equ:def-h} to see that
\begin{align}
\label{equ:13.4}
\begin{split}
S_2&= 4\sum_{\substack{c \equiv 1 \bmod {(1+i)^3}  }} \sum_{\substack{a \equiv 1 \bmod {(1+i)^3} \\ N(a) >  Y \\ a | c}}  \mu_{[i]}(a) \sideset{}{^*}\sum_{(l,2)=1}\sum_{\substack{n_1, n_2 \equiv 1 \bmod {(1+i)^3}\\ (n_1n_2,c)=1}} \frac{\chi_{(1+i)^5l}(n_1n_2)d_{[i]}(n_1)d_{[i]}(n_2)}{N(n_1n_2)^{\frac{1}{2}}}h (c^2l,n_1,n_2 )  \\
&= \frac{4}{(2\pi i )^2}\sum_{\substack{c \equiv 1 \bmod {(1+i)^3}  }} \sum_{\substack{a \equiv 1 \bmod {(1+i)^3} \\ N(a) >  Y \\ a | c}}  \mu_{[i]}(a)
\int\limits_{(\frac{1}{2}+\varepsilon)} \int\limits_{(\frac{1}{2}+\varepsilon)}   \frac{w(u)w(v)}{uv}  U_1^u U_2^v \\
&\quad \times \sideset{}{^*}\sum_{(l, 2)=1} \Phi\left( \frac{N(c^2l)}{X}\right)
 L_c (\tfrac{1}{2}+u,\chi_{(1+i)^5l})^2  L_c (\tfrac{1}{2}+v,\chi_{(1+i)^5l})^2 dudv,
\end{split}
\end{align}
 where we define $L_c(s,\chi)$ to be the function by removing the Euler factors from $L(s,\chi)$ at prime ideals dividing $c$.

Applying the estimation
\begin{align*}
& |L_c(\tfrac{1}{2}+u,\chi_{(1+i)^5l})^2L_c(\tfrac{1}{2}+v,\chi_{(1+i)^5l})^2| \ll |L_c(\tfrac{1}{2}+u,\chi_{(1+i)^5l})|^4+|L_c(\tfrac{1}{2}+u,\chi_{(1+i)^5l})|^4 \\
\ll & d_{[i]}^2(c)\left ( |L(\tfrac{1}{2}+u,\chi_{(1+i)^5l})|^4+|L(\tfrac{1}{2}+u,\chi_{(1+i)^5l})|^4 \right ),
\end{align*}
 we can bound $S_2$ by moving the lines of the integrations in \eqref{equ:13.4} to $\Re(u) = \Re(v) = \frac{1}{\log X}$ to see that
\begin{align}
\label{equ:13.5}
S_2 \ll (\log X )^2 \sum_{(c,2)=1} d_{[i]}(c)^4  \sum_{\substack{N(a)>Y\\ a|c}}  \int\limits_{(\frac{1}{\log X})} \int\limits_{(\frac{1}{\log X})}  |w(u)w(v)| \sideset{}{^*}\sum_{\substack{(l,2)=1 \\ N(l) \leq \frac{5X}{2N(c)^2} } }   |L (\tfrac{1}{2}+u,\chi_{(1+i)^5l} )|^4 \  |du| |dv|.
\end{align}

  By Corollary \ref{4momentupperbound}, we see that for $|\Im(u)| \leq \frac{5X(\log X)^2}{2N(c)^2}$,
\begin{align}
\label{equ:13.5+}
 \sumstar_{\substack{(l,2)=1 \\ N(l) \leq \frac{5X}{2N(c)^2}}} |L(\tfrac{1}{2}+u,\chi_{(1+i)^5l})| ^4
\ll \frac{X}{N(c)^2} (\log X)^{13}.
\end{align}

   Note that Lemma \ref{lem:2.3} implies that
\begin{align}
\label{equ:13.6'}
 \sumstar_{\substack{(l,2)=1 \\ N(l) \leq \frac{5X}{2N(c)^2}}} |L(\tfrac{1}{2}+u,\chi_{(1+i)^5l})| ^4
\ll \left( \frac{X}{N(c)^2} \right)^{1+\varepsilon} (1+|\Im(u)|^2)^{1+\varepsilon}.
\end{align}

   Applying \eqref{equ:13.5+} in \eqref{equ:13.5} when $|\Im(u)| \leq \frac{5X(\log X)^2}{2N(c)^2}$ and \eqref{equ:13.6'} in \eqref{equ:13.5} otherwise, together with the observation that $w(u)$ decreases exponentially in $\Im(u)$, we deduce that
\begin{align*}
S_2 \ll X (\log X)^{15} \sum_{(c,2)=1} \frac{d^4_{[i]}(c)}{N(c)^2}  \sum_{\substack{N(a)>Y\\ a|c}} 1
\ll X (\log X)^{15} \sum_{N(c)>Y}\frac{d^5_{[i]}(c)}{N(c)^2}  \ll XY^{-1} (\log X)^{46},
\end{align*}
 where the last estimation above follows from partial summation and the following estimation for $x>2$:
\begin{align}
\label{destimation}
\sum_{N(c) \leq x}d^5_{[i]}(c) \ll x(\log x)^{31}.
\end{align}

  The unconditionally estimation for $S_2$ is obtained similarly by applying \eqref{equ:13.6'} in \eqref{equ:13.5} for all $u$ and this
completes the proof of the lemma.
\end{proof}

 Next, we treat $S_1$ by applying the Poisson summation formula given in Lemma \ref{Poissonsumformodd} to recast it as
\begin{align}
\label{equ:13.a}
\begin{split}
S_1=& 4 \sum_{\substack{a \equiv 1 \bmod {(1+i)^3} \\ N(a) \leq Y }}  \mu_{[i]}(a)   \sum_{\substack{n_1, n_2 \equiv 1 \bmod {(1+i)^3}\\ (n_1n_2,a)=1}} \leg {(1+i)^5}{n_1n_2} \frac{d_{[i]}(n_1)d_{[i]}(n_2)}{N(n_1n_2)^{\frac{1}{2}}}\sum_{(d,2)=1}\leg { d}{n_1n_2} h(a^2d,n_1,n_2)\\
=& 2X \sum_{\substack{a \equiv 1 \bmod {(1+i)^3} \\ N(a) \leq Y }}   \frac{\mu_{[i]}(a)}{N(a)^2} \sum_{k \in
   \mathcal{O}_K}(-1)^{N(k)} \sum_{\substack{n_1, n_2 \equiv 1 \bmod {(1+i)^3}\\ (n_1n_2,a)=1}} \frac{d_{[i]}(n_1)d_{[i]}(n_2)}{N(n_1n_2)^{\frac{1}{2}}}\frac{g(k, n_1n_2)}{N(n_1n_2)}\widetilde{h}\Big(\sqrt{\frac {N(k)X}{2N(a^2n_1n_2)}}, n_1, n_2  \Big ),
\end{split}
\end{align}
  where
\begin{align*}
 \widetilde{h}(t, y,z) =  \widetilde{\Phi}(t)
V\left(\frac{ N(y)}{U_1} \right)V
\left(\frac{ N(z)}{U_2} \right).
\end{align*}

Now we write $S_1 = S_1(k=0) + S_1(k \neq 0)$, where $S_1(k=0)$ corresponds to the term with $k=0$. By applying \eqref{eq:Vdef} and \eqref{Phitilde}, we see that when $k \neq 0$,
\begin{align}
\label{wildh}
\begin{split}
 & \widetilde{h}\Big(\sqrt{\frac {N(k)X}{2N(a^2n_1n_2)}}, n_1, n_2  \Big ) \\
 =&
\frac{\pi }{(2\pi i)^3 }\int\limits_{(\varepsilon)}\int\limits_{(\varepsilon)}\int\limits_{(\varepsilon)}
 \left(\frac{N(a)^2}{ N(k)}\right)^s \mathcal{J}(s) w(u)w(v) \frac{1}{N(n_1)^{u-s} N(n_2)^{v-s}}\frac{U_1^u U_2^vX^{-s}}{uv}\ ds \ du \ dv, \\
 =&
\frac{\pi }{(2\pi i)^3 }\int\limits_{(\varepsilon)}\int\limits_{(\varepsilon)}\int\limits_{(\half+\varepsilon)}
 \left(\frac{N(a)^2}{ N(k)}\right)^s \mathcal{J}(s) w(u+s)w(v+s) \frac{1}{N(n_1)^{u} N(n_2)^{v}}\frac{U_1^{u+s} U_2^{v+s}X^{-s}}{(u+s)(v+s)}\ ds \ du \ dv,
\end{split}
\end{align}
  where
\begin{align*}
 \mathcal{J}(s)=\widehat{\Phi}(1-s)
(\frac {\pi^2}{2} )^{-s}\frac{\Gamma (s)}{\Gamma (1-s)}
\end{align*}
   and where the last expression in \eqref{wildh} follows from moving the lines of the first triple integral in \eqref{wildh} to $\Re(s) = \frac{1}{2}+\varepsilon,\Re(u)=\Re(v)=\frac{1}{2}+2\varepsilon$, and a change the variables $u'=u-s,v'=v-s$.

Substituting the last expression in  \eqref{wildh} to \eqref{equ:13.a}, we see  by using our notation for $k_1, k_2$ given in Section \ref{sect: alybehv} that
\begin{align}
\label{equ:15.3}
\begin{split}
S_1(k \neq 0)=& 2X \sum_{\substack{a \equiv 1 \bmod {(1+i)^3} \\ N(a) \leq Y }}   \frac{\mu_{[i]}(a)}{N(a)^2} \sum_{\substack{k \in
    \mathcal{O}_K \\ k \neq 0}}(-1)^{N(k)}
\frac{\pi }{(2\pi i)^3 }\int\limits_{(\varepsilon)}\int\limits_{(\varepsilon)}\int\limits_{(\half+\varepsilon)}
 \left(\frac{N(a)^2}{ N(k)}\right)^s \mathcal{J}(s) w(u+s)w(v+s)\\
& \times \frac{U_1^{u+s} U_2^{v+s}X^{-s}}{(u+s)(v+s)}
 Z(\half+u, \half+v, a, k)
 \ ds \ du \ dv \\
=& \frac{2\pi X }{(2\pi i)^3 }\sum_{\substack{a \equiv 1 \bmod {(1+i)^3} \\ N(a) \leq Y }}   \frac{\mu_{[i]}(a)}{N(a)^2} \sum_{\substack{k \in
    \mathcal{O}_K \\ k \neq 0}}\frac{(-1)^{N(k)}}{N(k)^s} \int\limits_{(\varepsilon)}\int\limits_{(\varepsilon)}\int\limits_{(\half+\varepsilon)}
 N(a)^{2s}\mathcal{J}(s) w(u+s)w(v+s) \\
& \quad\times \frac{U_1^{u+s} U_2^{v+s}X^{-s}}{(u+s)(v+s)}  L^2(1+u,\chi_{ik_1}) L^2(1+v,\chi_{ik_1})Z_2(\tfrac{1}{2}+u,\tfrac{1}{2}+v,a,k)\ ds \ du \ dv.
\end{split}
\end{align}

  We observe that if we move the lines of integrations over $u,v$ in the last expression in \eqref{equ:15.3} to the left, then we encounter poles at $u=v=0$ only when $k_1=\pm i$. For this reason, we further write $S_1(k \neq 0)=S_1(k_1=\pm i)+S_1(k_1 \neq \pm i)$, where
\begin{align}
\label{S1k1i}
\begin{split}
S_1(k_1=\pm i)
 =& \frac{2\pi X }{(2\pi i)^3 }\sum_{\substack{a \equiv 1 \bmod {(1+i)^3} \\ N(a) \leq Y }}   \frac{\mu_{[i]}(a)}{N(a)^2} \sum_{\substack{k_2 \in
   G }}\frac {(-1)^{N(k_2)}}{N(k_2)^{2s}} \int\limits_{(\varepsilon)}\int\limits_{(\varepsilon)}\int\limits_{(\half+\varepsilon)}
 N(a)^{2s}\mathcal{J}(s) w(u+s)w(v+s)    \\
& \times \frac{U_1^{u+s} U_2^{v+s}X^{-s}}{(u+s)(v+s)}  \zeta^2_K(1+u) \zeta_K^2(1+v)Z_2(\tfrac{1}{2}+u,\tfrac{1}{2}+v,a, \pm i k_2^2)\ ds \ du \ dv,
\end{split}
\end{align}
and
\begin{align}
\label{equ:15.4}
\begin{split}
 S_1(k_1 \neq \pm i)
  =& \frac{2\pi X }{(2\pi i)^3 }\sum_{\substack{a \equiv 1 \bmod {(1+i)^3} \\ N(a) \leq Y }}   \frac{\mu_{[i]}(a)}{N(a)^2} \sum_{\substack{k \in
   \mathcal{O}_K \\ k \neq 0, k_1 \neq \pm i}}\frac{(-1)^{N(k)} }{N(k)^s} \int\limits_{(\varepsilon)}\int\limits_{(\varepsilon)}\int\limits_{(\half+\varepsilon)}
 N(a)^{2s}\mathcal{J}(s) w(u+s)w(v+s)  \\
& \quad\times \frac{U_1^{u+s} U_2^{v+s}X^{-s}}{(u+s)(v+s)} L^2(1+u,\chi_{ik_1}) L^2(1+v,\chi_{ik_1})Z_2(\tfrac{1}{2}+u,\tfrac{1}{2}+v,a,k)\ ds \ du \ dv.
\end{split}
\end{align}

\subsection{Computing $S_1$: the term $S_1(k=0)$}
Note that by Lemma \ref{Gausssum} we have $g(0,n)=\varphi_{[i]}(n)$ if $n = \square$, and $0$ otherwise.   Thus we get
\begin{align}
\label{k=0}
  S_1(k=0)=& 2X \sum_{\substack{n_1, n_2 \equiv 1 \bmod {(1+i)^3}\\ n_1n_2=\square}} \frac{d_{[i]}(n_1)d_{[i]}(n_2)}{N(n_1n_2)^{\frac{1}{2}}}\frac{\varphi_{[i]}(n_1n_2)}{N(n_1n_2)} \sum_{\substack{ a \equiv 1 \bmod {(1+i)^3} \\(a,n_1n_2)=1 \\ N(a) \leq Y}} \frac {\mu_{[i]}(a)}{N(a)^2}\widetilde{h}\Big(0, n_1, n_2  \Big ).
\end{align}

    We note that
\begin{align*}
& \sum_{\substack{ a \equiv 1 \bmod {(1+i)^3} \\ (a,n_1n_2)=1 \\ N(a) \leq Y}} \frac {\mu_{[i]}(a)}{N(a)^2}
= \frac{4}{3\zeta_K(2)}\prod_{\substack{ \varpi \equiv 1 \bmod {(1+i)^3} \\ \varpi |n_1n_2}}\left(1-\frac{1}{N(\varpi)^2} \right)^{-1} +O \left(Y^{-1}\right).
\end{align*}

   Applying this to \eqref{k=0}, we see that
\begin{align*}
\begin{split}
S_1(k=0)
=\frac{8X}{3\zeta_K(2)}\sum_{\substack{n_1, n_2 \equiv 1 \bmod {(1+i)^3}\\  n_1n_2=\square}} \frac{d_{[i]}(n_1)d_{[i]}(n_2)}{N(n_1n_2)^{\frac{1}{2}}} \mathcal{P}(n_1n_2)\widetilde{h}\Big(0, n_1, n_2  \Big ) \\
 + O \Big(\frac{X}{Y} \sum_{\substack{n_1, n_2 \equiv 1 \bmod {(1+i)^3}\\  n_1n_2=\square}} \frac{d_{[i]}(n_1)d_{[i]}(n_2)}{N(n_1n_2)^{\frac{1}{2}}} \widetilde{h}\Big(0, n_1, n_2  \Big ) \Big ),
\end{split}
\end{align*}
  where we recall that $\mathcal{P}(n_1n_2)$ is defined in \eqref{Pn}.

   Due to the rapid decay of $\widetilde{h}$, we can estimate the error term above as
\begin{align*}
 \sum_{\substack{n_1, n_2 \equiv 1 \bmod {(1+i)^3}\\  n_1n_2=\square}} \frac{d_{[i]}(n_1)d_{[i]}(n_2)}{N(n_1n_2)^{\frac{1}{2}}} \widetilde{h}\Big(0, n_1, n_2  \Big ) \ll \sum_{\substack{n_1, n_2 \equiv 1 \bmod {(1+i)^3}\\  n_1n_2=\square \\ N(n_1), N(n_2) \ll X^{1+\varepsilon}}} \frac{d_{[i]}(n_1)d_{[i]}(n_2)}{N(n_1n_2)^{\frac{1}{2}}}.
\end{align*}

   We now denote $d=(n_1, n_2)$ with $d$ being primary together with a change of variables: $n_j \rightarrow dn_j, j=1,2$ to see that for the new variables $n_1, n_2$, we have $(n_1, n_2)=1$ and the condition that $d^2n_1n_2=\square$ further implies that both $n_j, j=1,2$ are squares now so that we have
\begin{align*}
& \sum_{\substack{n_1, n_2 \equiv 1 \bmod {(1+i)^3}\\  n_1n_2=\square \\ N(n_1), N(n_2) \ll X^{1+\varepsilon}}} \frac{d_{[i]}(n_1)d_{[i]}(n_2)}{N(n_1n_2)^{\frac{1}{2}}} \ll \sum_{\substack{d \equiv 1 \bmod {(1+i)^3}\\  N(d) \ll X^{1+\varepsilon}}}  \sum_{\substack{n_1, n_2 \equiv 1 \bmod {(1+i)^3}\\  n_1, n_2=\square \\ N(n_1), N(n_2) \ll X^{1+\varepsilon}/N(d)}} \frac{d_{[i]}(dn_1)d_{[i]}(dn_2)}{N(d^2n_1n_2)^{\frac{1}{2}}} \\
\ll &  \sum_{\substack{d \equiv 1 \bmod {(1+i)^3}\\  N(d) \ll X^{1+\varepsilon}}} \frac{d_{[i]}(d)^2}{N(d)} \sum_{\substack{n_1, n_2 \equiv 1 \bmod {(1+i)^3}\\  n_1, n_2=\square \\ N(n_1), N(n_2) \ll X^{1+\varepsilon}/N(d)}} \frac{d_{[i]}(n_1)d_{[i]}(n_2)}{N(n_1n_2)^{\frac{1}{2}}} \\
\ll &  \sum_{\substack{d \equiv 1 \bmod {(1+i)^3}\\  N(d) \ll X^{1+\varepsilon}}} \frac{d_{[i]}(d)^2}{N(d)} \Big ( \sum_{\substack{n \equiv 1 \bmod {(1+i)^3} \\ N(n)^2 \ll X^{1+\varepsilon}/N(d)}} \frac{d_{[i]}(n^2)}{N(n)} \Big )^2 \ll (\log X)^6\sum_{\substack{d \equiv 1 \bmod {(1+i)^3}\\  N(d) \ll X^{1+\varepsilon}}} \frac{d_{[i]}(d)^2}{N(d)} \ll (\log X)^{10}.
\end{align*}
  where the last line follows by applying estimations similar to that given in \eqref{destimation}.

  Further note that we have
\begin{align*}
  \int\limits^{\infty}_{-\infty}\int\limits^{\infty}_{-\infty}\Phi \left(N(x+yi) \right)
\dif x \dif y =\int^{2\pi}_0\int^{\infty}_0\Phi (r^2) rdrd\theta =\pi \widehat{\Phi}(1).
\end{align*}

  We now conclude from the above discussions that
\begin{align}
\label{equ:14.2}
\begin{split}
S_1(k=0)
=& \frac{8X}{3\zeta_K(2)}\sum_{\substack{n_1, n_2 \equiv 1 \bmod {(1+i)^3}\\  n_1n_2=\square}} \frac{d_{[i]}(n_1)d_{[i]}(n_2)}{N(n_1n_2)^{\frac{1}{2}}} \mathcal{P}(n_1n_2)\widetilde{h}\Big(0, n_1, n_2  \Big )
 + O \left(\frac{X}{Y}(\log X)^{10} \right) \\
=& \frac{8\pi \widehat{\Phi}(1)X}{3\zeta_K(2)}\frac {1}{(2\pi
   i)^2} \int\limits\limits_{(2)}\int\limits\limits_{(2)}\frac {w(u)w(v)}{uv} U_1^u U_2^v Z(\half+u, \half+v)dudv
 + O \left(\frac{X}{Y}(\log X)^{10} \right),
\end{split}
\end{align}
  where $Z(\half+u, \half+v)$ is defined as in \eqref{zalphabeta}.

 It follows from  \eqref{equ:14.2} and  Lemma \ref{lem: zalphabeta} that
\begin{align}
\label{equ:14.2+}
S_1(k=0)
=\frac{8\pi \widehat{\Phi}(1)X}{3\zeta_K(2)}\frac {1}{(2\pi
   i)^2} \int\limits\limits_{(2)}\int\limits\limits_{(2)}\frac{U_1^u U_2^v}{uv(2u)^3(2v)^3(u+v)^4} \mathcal{E}(u,v)  \ du\ dv
+O \left(XY^{-1}(\log X)^{10} \right),
\end{align}
where
\[
\mathcal{E}(u,v) = w(u)w(v)  \zeta_K^3(1+2u)(2u)^3\zeta_K^3(1+2v)(2v)^3  \zeta_K^4(1+u+v)(u+v)^4  Z_1(\tfrac{1}{2}+u,\tfrac{1}{2}+v).
\]
Applying  Lemma \ref{lem: zalphabeta} again, we see that $\mathcal{E}$ is analytic for $\Re(u),\Re(v)> -\frac{1}{4} + \varepsilon$.

 We first move the lines of the integrations in \eqref{equ:14.2+} to $\Re(u) = \Re(v) = \frac{1}{10}$ by noting that we encounter no poles. We then move the line of the integration over $v$ to $\Re(v) = -\frac{1}{5}$ to see that we encounter two poles of order $4$ at $v=0$ and $v=-u$ in the process. It follows that
\begin{align}
 \label{equ:14.3}
\begin{split}
&\frac{1}{(2\pi i)^2}\int\limits_{(\frac{1}{10})}\int\limits_{(\frac{1}{10})}\frac{U_1^u U_2^v}{uv(2u)^3(2v)^3(u+v)^4} \mathcal{E}(u,v) \ du\ dv  \\
&= \frac{1}{2\pi i}\int\limits_{(\frac{1}{10})} \left( \underset{v=0}{\operatorname{Res}} +\underset{v=-u}{\operatorname{Res}}\right)
 \left ( \frac{U_1^u U_2^v}{uv(2u)^3(2v)^3(u+v)^4}\mathcal{E}(u,v)  \right )du + O \left(U_1^{\frac{1}{10}} U_2^{-\frac{1}{5}} \right).
\end{split}
\end{align}

  We treat the contribution from the residue at $v=0$ in \eqref{equ:14.3} to see that
\begin{align*}
I_1(u)&=\underset{v=0}{\operatorname{Res}} \left(\frac{U_1^u U_2^v}{uv(2u)^3(2v)^3(u+v)^4}\mathcal{E}(u,v)  \right) \\
&= \frac{U_1^u}{2^6\cdot3!u^{11}}
\Big (
\mathcal{E}(u,0) (u^3 (\log U_2)^3 -12 u^2 (\log U_2)^2 + 60 u \log U_2 -120 ) \\
& \quad +
\mathcal{E}^{(0,1)}(u,0)(3u^3 (\log U_2)^2 -24 u^2  \log U_2 + 60 u )+\mathcal{E}^{(0,2)}(u,0)(3u^3 \log U_2 - 12u^2) + \mathcal{E}^{(0,3)}(u,0) u^3
\Big ),
\end{align*}
  where $ \mathcal{E}^{(i,j)}(u, v) = \frac{\partial^{i+j}\mathcal{E}}{\partial u^i \partial v^j} (u,v)$.

 It follows by moving the line of the integration over $u$ from $\Re(u)= \frac{1}{10}$ to $\Re(u)= -\frac{1}{10}$ that we have
\begin{align*}
\begin{split}
\frac{1}{2\pi i}\int\limits_{(\frac{1}{10})} I_1(u) du
&=
 \underset{u=0}{\operatorname{Res}} \ I_1(u)
  +O(U_1^{-\frac{1}{10}}\log^3 X) \\
&=  \frac{5! \mathcal{E}(0,0)}{2^6\cdot 3!\cdot 10!} \left(-(\log U_1)^{10} + 5 (\log U_1)^9 \log U_2 - 9(\log U_1)^8 (\log U_2)^2
+ 6 (\log U_1)^7 (\log U_2)^3
 \right) \\
 &\quad +O\left((\log X)^9+ U_1^{-\frac{1}{10}}(\log X)^3 \right).
\end{split}
\end{align*}

  Similarly, we have that
\begin{align*}
\frac{1}{2\pi i}\int\limits_{(\frac{1}{10})} \underset{v=-u}{\operatorname{Res}}
 \left( \frac{U_1^u U_2^v}{uv(2u)^3(2v)^3(u+v)^4}\mathcal{E}(u,v)  \right)du \ll U_1^{\frac{1}{10}} U_2^{-\frac{1}{10}} (\log X)^3.
\end{align*}

Combining \eqref{equ:14.2+}-\eqref{equ:14.3}, together with the observation that $\displaystyle \lim_{s \rightarrow 0}\zeta_K(1+s)s=\pi/4$ and $Z_1(\frac{1}{2},\frac{1}{2}) = 4 a_4$, we obtain that
\begin{align}
\label{S1k0est}
\begin{split}
& S_1(k=0)  \\
=&\frac{8\pi X}{3\zeta_K(2)} \widehat{\Phi}(1) \cdot \leg {\pi}{4}^{10} \frac{4 \cdot 5! a_{4 }}{2^6 \cdot 3!\cdot 10!} \left(-(\log U_1)^{10} + 5 (\log U_1)^9 \log U_2 - 9(\log U_1)^8 (\log U_2)^2
+ 6 (\log U_1)^7 (\log U_2)^3
 \right) \\
& + O\left( X(\log X)^9+ XY^{-1} (\log X)^{10} \right).
\end{split}
\end{align}

\subsection{Computing $S_1$: the term  $S_1(k_1=\pm i)$}
\label{sec:k=square}

In this section, we evaluate $S_1(k_1=\pm i)$ given in \eqref{S1k1i}. Applying Lemma \ref{lem:15.3}, we see that
\begin{align*}
S_1(k_1=\pm i)
  =& \frac{2\pi X }{(2\pi i)^3 }\sum_{\substack{a \equiv 1 \bmod {(1+i)^3} \\ N(a) \leq Y }}   \frac{\mu_{[i]}(a)}{N(a)^2} \sum_{\substack{k_2 \in
   \mz[i] }}(-1)^{N(k_2)} \int\limits_{(\varepsilon)}\int\limits_{(\varepsilon)}\int\limits_{(\half+\varepsilon)}
 N(a)^{2s}\mathcal{J}(s) w(u+s)w(v+s) \frac{U_1^{u+s} U_2^{v+s} X^{-s}}{(u+s)(v+s)}   \\
  & \times \zeta^2_K(1+u)\zeta^2_K(1+v) (2^{1-2s}-1)\zeta_K(2s) \zeta_K^2(1+2u +2s) \zeta_K^2(1+2v +2s) Z_3(\tfrac{1}{2}+u,\tfrac{1}{2}+v, s,a)\ ds \ du \ dv.
\end{align*}

  As $Z_3(\tfrac{1}{2}+u,\tfrac{1}{2}+v, s)$ is analytic in the region $\Re(u),\Re(v) \geq \varepsilon, \Re(s) \geq 2\varepsilon$ by  (1) of Lemma \ref{lem:15.3} and $(2^{1-2s} -1) \zeta_K(2s)$ is analytic at $s=\frac{1}{2}$, we move the lines of the integration above to $\Re(u) = \Re(v) = 1,\Re(s) =\frac{1}{10}$ without encountering any poles to see that
\begin{align}
\label{equ:15.9-}
\begin{split}
S_1(k_1=\pm i)
  =& \frac{2\pi X }{(2\pi i)^3 }\sum_{\substack{a \equiv 1 \bmod {(1+i)^3} \\ N(a) \leq Y }}   \frac{\mu_{[i]}(a)}{N(a)^2} \int\limits_{(1)}\int\limits_{(1)}\int\limits_{(\frac{1}{10})}
 N(a)^{2s}\mathcal{J}(s) w(u+s)w(v+s) \frac{U_1^{u+s} U_2^{v+s} X^{-s}}{(u+s)(v+s)}   \\
 & \times \zeta^2_K(1+u)\zeta^2_K(1+v) (2^{1-2s}-1)\zeta_K(2s) \zeta_K^2(1+2u +2s) \zeta_K^2(1+2v +2s) Z_3(\tfrac{1}{2}+u,\tfrac{1}{2}+v, s,a)\ ds \ du \ dv.
\end{split}
\end{align}

 We extend the sum over $a$ in \eqref{equ:15.9-} to include all primary elements in $\mathcal{O}_K$, introducing an error term
\begin{align}
\label{alarge}
\begin{split}
&\frac{2\pi X }{(2\pi i)^3 }\sum_{\substack{a \equiv 1 \bmod {(1+i)^3} \\ N(a) > Y }}   \frac{\mu_{[i]}(a)}{N(a)^2} \int\limits_{(1)}\int\limits_{(1)}\int\limits_{(\frac{1}{10})}
 N(a)^{2s}\mathcal{J}(s) w(u+s)w(v+s) \frac{U_1^{u+s} U_2^{v+s} X^{-s}}{(u+s)(v+s)}   \\
 \times & \zeta^2_K(1+u)\zeta^2_K(1+v) (2^{1-2s}-1)\zeta_K(2s) \zeta_K^2(1+2u +2s) \zeta_K^2(1+2v +2s) Z_3(\tfrac{1}{2}+u,\tfrac{1}{2}+v, s,a)\ ds \ du \ dv.
 \end{split}
\end{align}

  To facilitate our estimation of the triple integral in the above expression and other similar integrals in what follows, we gather here a few bounds on $\zeta_K(s)$ that hold uniformly in specified regions. On write $s=\sigma+it$,  we have
\begin{align}
\label{zetaest}
\begin{split}
  \zeta_K(s) \ll & \big(1+(|t|+4)^{1-\sigma} \big )\min \big(\frac 1{|\sigma-1|}, \log (|t|+4) \big ), \quad \sigma>1, \\
  \zeta_K(s) \ll & \left( 1+|s|^2 \right)^{1-\sigma/2+\varepsilon}, \quad 0 \leq \sigma \leq 1, \\
  \frac 1{\zeta_K(s)} \ll & \log (|t|+4), \quad 1 \leq \sigma \leq 2.
\end{split}
\end{align}
  The first and third estimation above can be established similar to the proofs of \cite[Corollary 1.17]{MVa1} and \cite[Lemma 6.7]{MVa1}, respectively. The second estimation above is the convexity bound for $\zeta_K(s)$ (see \cite[Exercise 3, p. 100]{iwakow}).

 Also, by applying \eqref{Gausscircle}, we see that for the $j$-th derivative of $\zeta_K(s)$ with $j \geq 1$, we have for $\Re(s)>1$,
\begin{align*}
  \zeta^{(j)}_K(s)=& \sum_{\substack{ (n) \\ N(n) > 1}}(-\log N(n))^j N(n)^{-s}=\int^{\infty}_1 (-\log u)^j u^{-s}d(\sum_{\substack{ (n) \\ N(n) \leq u}}1) \\
=& \int^{\infty}_1 (-\log u)^j u^{-s}d(\frac {\pi}{4} u+O(u^{\theta}))  \\
=& \frac {\pi}{4}\frac {j!}{(1-s)^j}+O\big( \int^{\infty}_1 (s+\frac {j}{\log u})(-\log u)^j u^{-s-1+\theta}du \big).
\end{align*}
   We deduce readily from the above that for $j \geq 1$ and $\Re(s)>0.4$, we have
\begin{align}
\label{zetaderbound}
 \zeta^{(j)}_K(s) \ll 1+|s|.
\end{align}

  We further note that integrating by parts implies that for $\Re(s)<1$ and any integer $\nu \geq 1$,
\begin{align}
\label{equ:bd for fei}
\widehat{\Phi}(1-s) \ll _{\nu} \frac{3^{|\Re(s)|}}{|s||s-1|^{\nu-1}} \Phi_{(\nu)},
\end{align}
  where
\[
\Phi_{(\nu)} = \underset{0 \leq j \leq \nu}{\operatorname{ max }} \int_{\frac{1}{2}} ^{\frac{5}{2}} | \Phi^{(j)} (t)| dt.
\]

We move the lines of the integrations in \eqref{alarge} over $u,v$ to $\Re(u)=\Re(v)=\frac{1}{\log X}, \Re(s) = \frac{2}{\log X}$ without encountering any poles. Then by (2) of Lemma \ref{lem:15.3} and the estimations given in \eqref{zetaest} and \eqref{equ:bd for fei}, we see that on the new lines of integrations, the expression in \eqref{alarge} is
\begin{align*}
&\ll X (\log X)^{20} \sum_{\substack{a > Y \\ (a,2)=1}} \frac{1}{a^{2-\frac{4}{\log X}}} \int\limits_{(\frac{1}{\log X})}\int\limits_{(\frac{1}{\log X})}\int\limits_{(\frac{2}{\log X})}\
 (1+|2s|)^{1+\varepsilon} |\mathcal{J}(s)|  \left|\Gamma(u+s+\tfrac{1}{2})^2  \Gamma(v+s+\tfrac{1}{2})^2 \right|\ |ds| \ |du| \ |dv| \\
&\ll X (\log X)^{20} Y^{-1} \int_{(\frac{2}{\log X})} (1+|2s|)^{1+\varepsilon}|\frac {\Gamma(s)}{\Gamma (1-s)}| |\widehat{\Phi}(1-s)| |ds| \ll X  Y^{-1} (\log X)^{22} \Phi_{(4)},
\end{align*}
 where the last estimation above follows from \eqref{equ:bd for fei} with $\nu=4$ and the bound $\frac {\Gamma(s)}{\Gamma (1-s)} \ll |s|^{2\Re(s)-1}$.

We conclude from the above discussions and \eqref{equ:15.9-} that
\begin{align}
\label{equ:15.9}
\begin{split}
& S_1(k_1=\pm i) \\
  =& \frac{2\pi X }{(2\pi i)^3 }\sum_{\substack{a \equiv 1 \bmod {(1+i)^3}  }}   \frac{\mu_{[i]}(a)}{N(a)^2} \int\limits_{(\varepsilon)}\int\limits_{(\varepsilon)}\int\limits_{(\half+\varepsilon)}
 N(a)^{2s}\mathcal{J}(s) w(u+s)w(v+s) \frac{U_1^{u+s} U_2^{v+s} X^{-s}}{(u+s)(v+s)}   \\
 & \times \zeta^2_K(1+u)\zeta^2_K(1+v) (2^{1-2s}-1)\zeta_K(2s) \zeta_K^2(1+2u +2s) \zeta_K^2(1+2v +2s) Z_3(\tfrac{1}{2}+u,\tfrac{1}{2}+v, s,a)\ ds \ du \ dv \\
&+ O \left(  X  Y^{-1} (\log X)^{22}  \Phi_{(4)} \right) .
\end{split}
\end{align}

 We further apply  Lemma \ref{lem:15.4} to deduce from \eqref{equ:15.9} that
\begin{align}
\label{equ:15.10}
\begin{split}
 & S_1(k_1=\pm i) \\
  =& \frac{2\pi X }{(2\pi i)^3 } \int\limits_{(\varepsilon)}\int\limits_{(\varepsilon)}\int\limits_{(\half+\varepsilon)}
\mathcal{J}(s)(2^{1-2s}-1)\zeta_K(2s) w(u+s)w(v+s) \frac{U_1^{u+s} U_2^{v+s} X^{-s}}{(u+s)(v+s)}   \\
&\times  \zeta_K^2(1+u)\zeta_K^2(1+v)
 \frac{ \zeta_K^3(1+2u+2s)\zeta_K^3(1+2v+2s) \zeta_K^4(1+u+v+2s)}{\zeta_K^2(1+u+2s)\zeta_K^2(1+v+2s)} \\
& \times Z_4(\tfrac{1}{2}+u,\tfrac{1}{2}+v,s) \ ds \ du \ dv + O \left(X  Y^{-1}(\log X)^{22} \Phi_{(4)} \right).
\end{split}
\end{align}

  We now move the lines of the integrations in \eqref{equ:15.10} to $\Re(u) = \Re(v) = \Re(s) = \frac{1}{100}$ without encountering any poles, as $Z_4(\tfrac{1}{2}+u,\tfrac{1}{2}+v,s)$ is analytic and uniformly bounded in the region  $\Re(u),\Re(v)\geq -\frac{1}{8}, -\frac{1}{16}  \leq \Re(s) \leq \frac{1}{8} $ by Lemma \ref{lem:15.4}. Next, we move the line of the integration over $v$ to $\Re(v)= -\frac{1}{50}  + \frac{1}{\log X}$ to encounter a pole of order  $2$ at $v=0$ and a pole of order $4$ at $v=-s$ to see that triple integral in \eqref{equ:15.10} is
\begin{align}
\label{equ:15.12}
  \frac{1}{(2\pi i)^2} \int\limits_{(\frac{1}{100})} \int\limits_{(\frac{1}{100})} ( I_2  (u,s) + I_3 (u,s) )
 \ du \ ds + O \left(U_1^{\frac{1}{50}} U_2^{-\frac{1}{100}}X^{-\frac{1}{100}}   \Phi_{(4)} \right),
\end{align}
where the error term above follows from using estimations given in \eqref{zetaest} and \eqref{equ:bd for fei} and where we denote $I_2(u,s),I_3(u,s)$ for the residues of the integrand in \eqref{equ:15.10} at $v=0$ and $v=-s$, respectively.

 We evaluate the double integral of $I_2 (u,s)$ in \eqref{equ:15.12} by writing the integrand in  \eqref{equ:15.10} as
\[
\frac{U_1^{u+s} U_2^{v+s}X^{-s}}{(u+s)(v+s)}    \frac{1}{u^2v^2}   \frac{(u+2s)^2 (v+2s)^2}{s (2u+2s)^3 (2v+2s)^3 (u+v+2s)^4}\mathcal{F}(u,v,s),
\]
  where
\begin{align*}
\mathcal{F}(u,v,s)
  =&
\mathcal{J}(s)(2^{1-2s}-1)\zeta_K(2s)(s) w(u+s)w(v+s)   \\
&\quad \times  (\zeta_K(1+u)u)^2(\zeta_K(1+v)v)^2 \\
& \times \frac{ (\zeta_K(1+2u+2s)(2u+2s))^3(\zeta_K(1+2v+2s)(2v+2s))^3 (\zeta_K(1+u+v+2s)(u+v+2s))^4}{(\zeta_K(1+u+2s)(u+2s))^2(\zeta_K(1+v+2s)(v+2s))^2} \nonumber\\
&\quad \times Z_4(\tfrac{1}{2}+u,\tfrac{1}{2}+v,s).
\end{align*}

  It is easy to see that $\mathcal{F}(u,v,s)$ is analytic for $\Re(u+2s),\Re(v+2s)>0$ by Lemma \ref{lem:15.4} and that
\begin{align*}
I_2(u,s)
=&  \frac{U_1^{u+s} U_2^{s} X^{-s}}{ 16(u+2s)^3 (u+s)^4 s^4 u^2}\left(
\mathcal{F} (u,0,s) ( s(u+2s) \log U_2  -10s -3u)
+
\mathcal{F}^{(0,1,0)} (u,0,s)  s(u+2s)
\right ).
\end{align*}
 It follows from this that by moving the line of the double integral in \eqref{equ:15.12} involving $I_2(u,s)$ from $\Re(u)= \frac{1}{100} $ to $\Re(u)= -\frac{1}{100} + \frac{1}{\log X}$ and applying \eqref{zetaest}, \eqref{zetaderbound}, we have
\begin{align}
\label{equ:15.14}
\frac{1}{(2\pi i)^2} \int\limits_{(\frac{1}{100})} \int\limits_{(\frac{1}{100})}
I_2(u,s)  \ du\ ds
= \frac{1}{2\pi i} \int\limits_{(\frac{1}{100})}
\underset{u=0}{\operatorname{Res}}
 \left( I_2(u,s) \right) ds
+ O \left(U_2^{\frac{1}{100}}X^{-\frac{1}{100}} (\log X)^{5} \right).
\end{align}

Note that
\begin{align*}
\underset{u=0}{\operatorname{Res}}\ I_2(u,s)
=& \frac{U_1^{s} U_2^s X^{-s}}{64 s^{11} }
 \Big(
\mathcal{F}(0,0,s)(s^2\log U_1 \log U_2
-5  s\log U_1
-5 s \log U_2
+26
  )\\
& +\mathcal{F}^{(1,0,0)}(0,0,s)(s^2 \log U_2
-5   s)
+\mathcal{F}^{(0,1,0)}(0,0,s)(s^2\log U_1
-5   s)
+ \mathcal{F}^{(1,1,0)}(0,0,s) s^2
\Big).
\end{align*}

 As one checks that the expression in the parenthesis above is analytic for $-\frac{1}{16} \leq \Re(s) \leq \frac{1}{8}$,
we can move the line of the integral in \eqref{equ:15.14} involving $\displaystyle \underset{u=0}{\text{Res }} I_2(u,s)$ to $\Re(s)=-\frac{1}{100}$. In this process, we encounter a pole at $s=0$ so that we have
\begin{align}
\label{equ:15.15}
\begin{split}
\frac{1}{2\pi i} \int\limits_{(\frac{1}{100})}
\underset{u=0}{\operatorname{Res}}
 \left( I_2(u,s) \right) ds
=&
\frac{\mathcal{F}(0,0,0)}{64} \sum_{\substack{j_1 + j_2 +j_3 +j_4 = 10 \\ j_1,j_2,j_3,j_4 \geq 0}} \frac{(-1)^{j_3} B(j_4)}{j_1 ! j_2 ! j_3 ! j_4 !}
 (\log^{j_1} U_1)  (\log^{j_2} U_2) (\log^{j_3} X)\\
 & + O \left( U_1^{-\frac{1}{100}} U_2^{-\frac{1}{100}} X^{\frac{1}{100}} + (\log X)^9 \right),
\end{split}
\end{align}
where
 \begin{align*}
 B(j)=\left\{
 \begin{array}
  [c]{ll}
     26                    & \text{if } j=0,\\
  -5 (\log U_1 + \log U_2) & \text{if } j=1,\\
  2 \log U_1 \log U_2      & \text{if } j= 2,\\
  0                        & \text{if } j\geq 3.
 \end{array}
 \right.
\end{align*}

   To evaluate $\mathcal{F}(0,0,0)$, we use the fact that $s\Gamma(s)=1$ when $s=0$ and the functional equation \eqref{fneqn} for $\zeta_K(s)$:
\begin{align*}
\pi^{-s}\Gamma(s)\zeta_K(s)=\pi^{-(1-s)}\Gamma(1-s)\zeta_K(1-s)
\end{align*}
  to obtain that $\zeta_K(0)=-\frac {1}{4}$. On the other hand, a direct calculation shows that $Z_4(\tfrac{1}{2},\tfrac{1}{2},0) =\frac {16}{3\zeta_K(2)}a_4$. 
  We then deduce that
\begin{align*}
\mathcal{F}(0,0,0)
  =\widehat{\Phi}(1)\zeta_K(0)(\frac {\pi}{4})^{10} Z_4(\tfrac{1}{2},\tfrac{1}{2},0)= - \frac{4 \widehat{\Phi}(1) a_4}{3\zeta_K(2)}.
\end{align*}

 Similarly, by moving the line of the integration over $u$ from  $\Re(u)=\frac{1}{100}$ to $\Re(u)=\frac{1}{\log X}$ in the double integral of $I_3 (u,s)$ in \eqref{equ:15.12} and applying estimations given in \eqref{zetaest}-\eqref{equ:bd for fei}, we see that
\begin{align}
\label{equ:15.13}
 \frac{1}{(2\pi i)^2} \int\limits_{(\frac{1}{100})} \int\limits_{(\frac{1}{100})}  I_3 (u,s)
 \ du \ ds
\ll  U_1^{\frac{1}{100}}  X^{-\frac{1}{100}} ( \log   X)^5 \Phi_{(5)}.
\end{align}

 We now summarize our result on $S_1(k_1=\pm i)$ in the following lemma, by combining the estimations from \eqref{equ:15.10}-\eqref{equ:15.13}.
\begin{lemma}
\label{lem:k=square}
We have
\begin{align*}
S_1(k_1=\pm i)
  = &- \frac{8\pi a_4 \widehat{\Phi}(1) X}{3\cdot 64 \zeta_K(2)} (\frac {\pi}{4})^{10}   \sum_{j_1 + j_2 +j_3 +j_4 = 10} \frac{(-1)^{j_3} B(j_4)}{j_1 ! j_2 ! j_3 ! j_4 !}
 (\log^{j_1} U_1)  (\log^{j_2} U_2) (\log^{j_3} X) \\
& + X
 \cdot O \Big(
   (\log X)^{9}+U_1^{-\frac{1}{100}} U_2^{-\frac{1}{100}} X^{\frac{1}{100}}
  +U_2^{\frac{1}{100}}X^{-\frac{1}{100}} (\log X)^{5}  \Big ) \\
&
 + X\cdot O \Big(  U_1^{\frac{1}{50}} U_2^{-\frac{1}{100}} X^{-\frac{1}{100}}  \Phi_{(4)}
 +  Y^{-1} (\log X)^{22}  \Phi_{(4)} +U_1^{\frac{1}{100}}X^{-\frac{1}{100}} ( \log   X)^5 \Phi_{(5)}
   \Big ) .
\end{align*}
\end{lemma}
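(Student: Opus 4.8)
The plan is to assemble the conclusion from the chain of identities and estimates already set up in this subsection, which together constitute the proof. Starting from the expression \eqref{S1k1i} for $S_1(k_1=\pm i)$, the first step is to apply Lemma \ref{lem:15.3} to evaluate the inner sum over $k_2\in G$, which replaces the Dirichlet series encoded in $Z_2$ by an explicit product of $\zeta_K$-factors times $Z_3(\tfrac12+u,\tfrac12+v,s,a)$; this is the passage to \eqref{equ:15.9-}. Next I would extend the sum over $a$ from $N(a)\le Y$ to all primary $a$, controlling the resulting tail \eqref{alarge} by shifting the $u,v,s$-contours to $\Re(u)=\Re(v)=\tfrac1{\log X}$, $\Re(s)=\tfrac2{\log X}$ and invoking part (2) of Lemma \ref{lem:15.3} together with the convexity and derivative bounds \eqref{zetaest}, \eqref{zetaderbound} for $\zeta_K$ and the decay bound \eqref{equ:bd for fei} for $\widehat{\Phi}$; this produces \eqref{equ:15.9} with error $O\bigl(XY^{-1}(\log X)^{22}\Phi_{(4)}\bigr)$. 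Applying Lemma \ref{lem:15.4} to carry out the now-complete sum over $a$ then yields \eqref{equ:15.10}.

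The second stage is a residue computation. I would move the $u,v,s$-contours to $\Re(u)=\Re(v)=\Re(s)=\tfrac1{100}$, which is legal since $Z_4$ is analytic and uniformly bounded there by Lemma \ref{lem:15.4}, and then push the $v$-line to $\Re(v)=-\tfrac1{50}+\tfrac1{\log X}$, crossing a pole of order $2$ at $v=0$ (from $\zeta_K^2(1+v)$) and one of order $4$ at $v=-s$ (from $\zeta_K^3(1+2v+2s)$ and the $(v+s)^{-1}$ factor); this gives \eqref{equ:15.12} with integrands $I_2(u,s),I_3(u,s)$ the respective residues, plus the error displayed there via \eqref{zetaest}, \eqref{equ:bd for fei}. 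For $I_3$, a further shift of the $u$-line to $\Re(u)=\tfrac1{\log X}$ and the trivial bound \eqref{equ:15.13} show it is absorbed into the error term. For $I_2$, I would factor the integrand of \eqref{equ:15.10} so that all $\zeta_K$-factors combine into the analytic function $\mathcal{F}(u,v,s)$, read off $I_2(u,s)=\operatorname{Res}_{v=0}$, then move the $u$-line to $\Re(u)=-\tfrac1{100}+\tfrac1{\log X}$ to extract $\operatorname{Res}_{u=0}I_2(u,s)$ (a pole of order $2$), obtaining \eqref{equ:15.14}, and finally move the $s$-line to $\Re(s)=-\tfrac1{100}$, crossing a pole of order $11$ at $s=0$. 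The residue at $s=0$ is the polynomial in $\log U_1,\log U_2,\log X$ displayed in \eqref{equ:15.15}; evaluating the leading constant requires $\zeta_K(0)=-\tfrac14$ (from the functional equation for $\zeta_K$ together with $s\Gamma(s)\to1$ as $s\to0$) and $Z_4(\tfrac12,\tfrac12,0)=\tfrac{16}{3\zeta_K(2)}a_4$, giving $\mathcal{F}(0,0,0)=-\tfrac{4\widehat{\Phi}(1)a_4}{3\zeta_K(2)}$.

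Collecting \eqref{equ:15.10}, \eqref{equ:15.12}, \eqref{equ:15.13}, \eqref{equ:15.14} and \eqref{equ:15.15}, and bookkeeping the error contributions from each horizontal contour shift, then yields the stated formula. I expect the main obstacle to be purely organizational rather than analytic: keeping track of the exact orders of the poles at $v=0,\,v=-s,\,u=0$ and $s=0$, correctly accounting for the many error terms coming from the successive shifts of the $u$-, $v$- and $s$-lines (each contributing factors such as $U_1^{1/100}X^{-1/100}$, $U_2^{1/100}X^{-1/100}$ or $(\log X)^{j}$), and writing the polynomial arising from the order-$11$ pole at $s=0$ in the compact form involving the auxiliary coefficients $B(j_4)$. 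All the needed analytic input — boundedness of $Z_3$ and $Z_4$, the bounds \eqref{zetaest}, \eqref{zetaderbound}, \eqref{equ:bd for fei}, and Lemmas \ref{lem:15.3}–\ref{lem:15.4} — is already in place, so no new estimate is required.
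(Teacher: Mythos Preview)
Your proposal is correct and follows essentially the same approach as the paper: apply Lemma \ref{lem:15.3} to pass to \eqref{equ:15.9-}, extend the $a$-sum and bound the tail to reach \eqref{equ:15.9}, apply Lemma \ref{lem:15.4} to obtain \eqref{equ:15.10}, and then perform the cascade of contour shifts (in $v$, then $u$, then $s$) extracting the residues $I_2$, $I_3$ and finally the order-$11$ pole at $s=0$, exactly as in \eqref{equ:15.12}--\eqref{equ:15.15}. The identification of pole orders, the analytic input needed at each step, and the evaluation of $\mathcal{F}(0,0,0)$ all match the paper's argument.
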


\subsection{Computing $S_1$: the term $S_1(k \neq \pm i)$}
\label{sec:error}
  In this section, we estimate $S_1(k \neq \pm i)$.  We first deduce from \eqref{equ:15.4} that
\begin{align}
\label{S_1k1gen}
\begin{split}
 S_1(k_1 \neq \pm i) \ll & X \sum_{\substack{a \equiv 1 \bmod {(1+i)^3} \\ N(a) \leq Y }}   \frac{1}{N(a)^2} \sum_{\substack{k \in
    \mathcal{O}_K \\ k \neq 0, k_1 \neq \pm i}}\Bigg |  \int\limits_{(\varepsilon)}\int\limits_{(\varepsilon)}\int\limits_{(\half+\varepsilon)}
 N(a)^{2s}\mathcal{J}(s) w(u+s)w(v+s)  \\
& \quad\times \frac{U_1^{u+s} U_2^{v+s}X^{-s}}{(u+s)(v+s)}\frac{1}{N(k)^s}  L^2(1+u,\chi_{ik_1}) L^2(1+v,\chi_{ik_1})Z_2(\tfrac{1}{2}+u,\tfrac{1}{2}+v,a,k) \ ds \ du \ dv \Bigg |.
\end{split}
\end{align}
 Let $Z$ be a parameter to be chosen later. We denote $S_{1,1}(k_1 \neq \pm i)$ for the right-hand side expression above truncated to $N(k_1) \leq Z$ and $S_{1,2}(k_1 \neq \pm i)$ for the right-hand side expression above over $N(k_1) > Z$.  For $S_{1,1}(k_1 \neq \pm i)$, we shift the the lines of the integrations to
$\Re(u)=\Re(v) = -\frac{1}{2}+ \frac{1}{\log X}, \Re(s)= \frac{3}{4}$. For $S_{1,2}(k_1 \neq \pm i)$, we shift the the lines of the integrations to $\Re(u)=\Re(v) = -\frac{1}{2}+ \frac{1}{\log X}, \Re(s)= \frac{5}{4}$. Thus we obtain via \eqref{equ:15.111111} that
\begin{align}
\label{equ:16.2}
\begin{split}
S_{1,1}(k_1 \neq \pm i) \ll &  X^{\frac{1}{4}} U_1^{\frac{1}{4}} U_2^{\frac{1}{4}} (\log X)^{10}\sum_{\substack{N(a) \leq Y \\ (a,2)=1}} \frac{d^{4}_{[i]}(a)}{\sqrt{N(a)}}\int\limits_{(-\frac{1}{2}+\frac{1}{\log X})}\int\limits_{(-\frac{1}{2}+\frac{1}{\log X})}\int\limits_{(\frac{3}{4})} |\mathcal{J}(s)w(u+s)w(v+s)| \\
& \times \sumflat_{\substack{ k_1 \neq \pm i \\ N(k_1) \leq  Z}}\frac{d^{12}_{[i]}(k_1)}{N(k_1)^{\frac{3}{4}}}  |L(1+u,\chi_{ik_1}) |^4 \ |ds| \ |du| \ |dv|,
\end{split}
\end{align}
  where $\sumflat$ denotes the sum over all square-free elements in $\mathcal{O}_K$.

  By the Cauchy-Schwarz inequality, we have that
\begin{align}
\label{equ:16.b-}
\begin{split}
& \sumflat_{\substack{ k_1 \neq \pm i \\ N(k_1) \leq  Z}}\frac{d^{12}_{[i]}(k_1)}{N(k_1)^{\frac{3}{4}}}  |L(1+u,\chi_{ik_1}) |^4
\ll \Big(\sumflat_{\substack{ k_1 \neq \pm i \\ N(k_1) \leq  Z}}\frac{d^{24}_{[i]}(k_1)}{N(k_1)}\Big)^2 \Big(\sumflat_{\substack{ k_1 \neq \pm i \\ N(k_1) \leq  Z}}\frac{1}{N(k_1)^{\frac{1}{2}}}  |L(1+u,\chi_{ik_1}) |^8 \Big )^{1/2} \\
\ll & (\log Z)^{2^{23}}\Big(\sumflat_{\substack{ k_1 \neq \pm i \\ N(k_1) \leq  Z}}\frac{1}{N(k_1)^{\frac{1}{2}}}  |L(1+u,\chi_{ik_1}) |^8 \Big )^{1/2}.
\end{split}
\end{align}
   Similar to our proof of Theorem \ref{upperbound} (and hence Corollary \ref{cor: upperbound}), we can show that under GRH, for $|\Im(u)|\leq T$,
\begin{align*}
 \sumflat_{\substack{ k_1 \neq \pm i \\ N(k_1) \leq  Z}}|L(1+u,\chi_{ik_1}) |^8  \ll Z  (\log Z)^{37}.
\end{align*}
  Applying the above estimation in \eqref{equ:16.b-}, we deduce that for $|\Im(u)| \leq  Z$,
\begin{align}
\label{equ:16.3b}
\sumflat_{\substack{ k_1 \neq \pm i \\ N(k_1) \leq  Z}} \frac{d^{12}_{[i]}(k_1)}{N(k_1)^{\frac{3}{4}}} |L(1+u,\chi_{ik_1}) |^4 \ll Z^{\frac{1}{4}} (\log Z)^{2^{24}}.
\end{align}

 Unconditionally, we have similar to Lemma \ref{lem:2.3} that
\begin{align*}
\sumflat_{\substack{ k_1 \neq \pm i \\ N(k_1) \leq  Z}} |L(1+u,\chi_{ik_1}) |^4
\ll Z^{1+\varepsilon} (1+|\Im(u)|^2)^{1+\varepsilon}.
\end{align*}
  It follows from this and partial summation that we have
\begin{align}
\label{equ:16.3a}
& \sumflat_{\substack{ k_1 \neq \pm i \\ N(k_1) \leq  Z}} \frac{d^{12}_{[i]}(k_1)}{N(k_1)^{\frac{3}{4}}}  |L(1+u,\chi_{ik_1}) |^4
\ll \sumflat_{\substack{ k_1 \neq \pm i \\ N(k_1) \leq  Z}} \frac{1}{N(k_1)^{\frac{3}{4}-\varepsilon}}  |L(1+u,\chi_{ik_1}) |^4
\ll Z^{\frac{1}{4}+\varepsilon}
(1+|\Im(u)|^2)^{1+\varepsilon}.
\end{align}

  Applying the estimation given \eqref{equ:16.3b} to \eqref{equ:16.2} for $|\Im(u)| \leq Z$ and using \eqref{equ:16.3a} otherwise, we obtain by noting the exponential decay of $w$ that
\begin{align}
\label{S11}
S_{1,1}(k_1 \neq \pm i) \ll &  X^{\frac{1}{4}} U_1^{\frac{1}{4}} U_2^{\frac{1}{4}} (\log X)^{10}Y(\log Y)^{2^4}Z^{\frac{1}{4}} (\log Z)^{2^{24}}\Phi_{(4)}.
\end{align}

  Similarly, we have
\begin{align}
\label{S12}
S_{1,2}(k_1 \neq \pm i)  \ll  X^{-\frac{1}{4}} U_1^{\frac{3}{4}} U_2^{\frac{3}{4}}Y^{3/2}(\log Y)^{2^4} Z^{-\frac{1}{4}}(\log Z)^{2^{24}}\Phi_{(4)}.
\end{align}

 We further observe that if instead we apply \eqref{equ:16.3a} to \eqref{equ:16.2} for all $\Im(u)$, then we obtain corresponding estimations for $S_{1,1}(k_1 \neq \pm i), S_{1,2}(k_1 \neq \pm i)$  by replacing $(\log Z)^{2^{24}}$ with $Z^{\varepsilon}$ in both \eqref{S11} and \eqref{S12}.  On setting $Z= U_1 U_2 Y^2 X^{-1}$ and keeping in mind that our choices of $Y$ and $Z$ will be at most powers of $X$ (see Section \ref{sec: pf1stmainthm}), we immediately derive from \eqref{S_1k1gen}, \eqref{S11} and \eqref{S12} the following result.
\begin{lemma}\label{lem:error}
Unconditionally, we have
\[
S_1(k \neq \pm i) \ll U_1^{\frac{1}{2}} U_2^{\frac{1}{2}}Y X^{\varepsilon}  \Phi_{(4)}.
\]
Under GRH, we have
\[
S_1(k \neq \pm i) \ll U_1^{\frac{1}{2}} U_2^{\frac{1}{2}} Y (\log X)^{2^{25}} \Phi_{(4)}.
\]
\end{lemma}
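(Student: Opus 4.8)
The plan is to read off both bounds from the estimates \eqref{S_1k1gen}, \eqref{S11} and \eqref{S12} already assembled above, by choosing the truncation parameter $Z$ so that the two ranges $N(k_1)\le Z$ and $N(k_1)>Z$ are balanced against each other. Recall that $S_1(k_1\neq\pm i)$, represented by the triple integral \eqref{equ:15.4}, is bounded in \eqref{S_1k1gen} and split there as $S_1(k_1\neq\pm i)=S_{1,1}(k_1\neq\pm i)+S_{1,2}(k_1\neq\pm i)$ according as $N(k_1)\le Z$ or $N(k_1)>Z$.

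The first step is to move the lines of integration in \eqref{S_1k1gen}. I would push the $u$- and $v$-contours to $\Re(u)=\Re(v)=-\tfrac12+\tfrac1{\log X}$ and the $s$-contour to $\Re(s)=\tfrac34$ for $S_{1,1}$ and to $\Re(s)=\tfrac54$ for $S_{1,2}$. The only candidates for poles coming from the $L$-factors are those of $L(1+u,\chi_{ik_1})$ and $L(1+v,\chi_{ik_1})$, but for $k_1\neq\pm i$ the character $\chi_{ik_1}$ is non-principal (it is principal precisely when $k_1=\pm i$), so these $L$-functions are entire and no poles are crossed. On the shifted contours I would insert the bound \eqref{equ:15.111111} for $Z_2$, evaluate the $a$-sum trivially --- it is $\ll Y^{1/2}(\log Y)^{O(1)}$ when $\Re(s)=\tfrac34$ and $\ll Y^{3/2}(\log Y)^{O(1)}$ when $\Re(s)=\tfrac54$, owing to the shifted weight $N(a)^{2s-2}$ --- and treat the sum over $k_1$ by first peeling off the factor $d_{[i]}^{12}(k_1)$ via the Cauchy--Schwarz step \eqref{equ:16.b-}. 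This reduces everything to the eighth-power average $\sum_{N(k_1)\le Z}N(k_1)^{-1/2}|L(1+u,\chi_{ik_1})|^8$, which under GRH is $\ll Z(\log Z)^{37}$ (by rerunning the argument behind Theorem \ref{upperbound}, hence Corollary \ref{cor: upperbound}, for the twisted family $\chi_{ik_1}$) and, unconditionally, is $\ll Z^{1+\varepsilon}(1+|\Im(u)|^2)^{1+\varepsilon}$ by the fourth-moment estimate of Lemma \ref{lem:2.3}. Using the GRH bound when $|\Im(u)|\le Z$ and the unconditional one otherwise, together with the exponential decay of $w$ in the imaginary direction, produces \eqref{S11} and \eqref{S12}.

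The last step is the optimisation. With $Z=U_1U_2Y^2X^{-1}$ one has $X^{1/4}Z^{1/4}=U_1^{1/4}U_2^{1/4}Y^{1/2}$ and $X^{-1/4}Z^{-1/4}=U_1^{-1/4}U_2^{-1/4}Y^{-1/2}$, so both \eqref{S11} and \eqref{S12} collapse to $U_1^{1/2}U_2^{1/2}Y$ times powers of $\log U_1$, $\log U_2$, $\log Z$ and times $\Phi_{(4)}$; since $Y$ and $Z$ are at most fixed powers of $X$, these log factors are absorbed into $(\log X)^{2^{25}}$, which gives the conditional bound. The unconditional bound follows in the same way, using \eqref{equ:16.3a} for all $\Im(u)$, which amounts to replacing $(\log Z)^{2^{24}}$ throughout by $Z^{\varepsilon}\ll X^{\varepsilon}$.

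The step I expect to be the main obstacle is the GRH estimate for the eighth-power sum over $k_1$. It cannot be quoted from Theorem \ref{upperbound}, which treats the family $\chi_{(1+i)^5d}$ with $d$ odd and square-free, so one must re-run the Soundararajan-type large-deviation argument for the twisted family $\chi_{ik_1}$ and --- crucially --- keep enough control on the dependence on $\Im(u)$ that the $u$-integral converges; it is precisely this that forces the use of the weaker unconditional fourth-moment bound for $|\Im(u)|>Z$ and the exploitation of the exponential decay of $w$. Once those inputs are secured, the balancing of $Z$ and the collection of the error terms are routine.
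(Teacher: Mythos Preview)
Your proposal is correct and mirrors the paper's argument step for step: the same contour shifts to $\Re(u)=\Re(v)=-\tfrac12+\tfrac{1}{\log X}$ with $\Re(s)=\tfrac34$ or $\tfrac54$, the same use of \eqref{equ:15.111111} and the Cauchy--Schwarz reduction \eqref{equ:16.b-} to an eighth-moment input, and the same balancing choice $Z=U_1U_2Y^2X^{-1}$. Your $a$-sum bound $Y^{1/2}(\log Y)^{O(1)}$ at $\Re(s)=\tfrac34$ is in fact the correct one (the displayed \eqref{S11} reads $Y$ there, evidently a slip, since only $Y^{1/2}$ makes the stated $Z$ balance to the lemma's $U_1^{1/2}U_2^{1/2}Y$), and your remark that the GRH eighth-moment bound must be obtained by rerunning the Soundararajan argument for the family $\chi_{ik_1}$ rather than quoting Theorem~\ref{upperbound} is exactly what the paper does.
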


\subsection{Proof of Theorem \ref{main-thm}}
\label{sec: pf1stmainthm}

   We now complete the proof for Theorem \ref{main-thm} in this section. 
By setting $U_1=U_2= U=\frac{X}{(\log X)^{2^{50}}}$ and $Y=X^{\frac{1}{2}}U_1^{-\frac{1}{4}}U_2^{-\frac{1}{4}}$, we deduce from \eqref{equ:13.1+}, \eqref{S1k0est}, Lemma \ref{lem:13.1}, Lemma \ref{lem:k=square} and Lemma \ref{lem:error} that under GRH, 
\begin{align}
\label{equ:S(U,U)}
S(U_1,U_2) &= \sumstar_{(d,2)=1} \left|A_{U}(d)\right|^2 \Phi\left(\tfrac{N(d)}{X}\right)=  \frac{\pi a_{4} \widehat{\Phi}(1)}{2^7 \cdot 3^4 \cdot 5^2 \cdot 7 \cdot \zeta_K(2)} (\frac {\pi}{4})^{10} X (\log X)^{10}+ O \left(X (\log X)^{9+\varepsilon}  +  X (\log X)^{-20} \Phi_{(5)} \right).
\end{align}
  Here we note that, similar to \cite[Lemma 2.1]{sound1}, we can show that the function $V$ appearing in the definition of $A_U(d)$ given in \eqref{equ:13.1} is a real-valued function, so that we have $A^2_{U}(d))=|A_{U}(d)|^2$.

We define $B_U(d) = L(\tfrac{1}{2},\chi_{(1+i)^5d})^2 - A_U(d)$ to see that
\begin{align*}
B_U(d)
=\frac{1}{\pi i} \int\limits_{(c)} w(s) L(\tfrac{1}{2}+s,\chi_{(1+i)^5d})^2  \frac{N(d)^s-U^s}{s} ds.
\end{align*}
  We move the line of the integral in the above expression for $B_U(d)$ to $\Re(s)=0$ by realizing that $\frac{N(d)^s-U^s}{s}$ is entire there. Further applying the bound that $| \frac{N(d)^{it}-U^{it}}{it}| \ll  \log ( \frac{N(d)}{U} )$ for $t\in \mathbb{R}$, we see  that
\[
B_U(d) \ll \log \left( \frac{N(d)}{U} \right) \int_{-\infty}^{\infty} |w(it)|  |L(\tfrac{1}{2}+it,\chi_{(1+i)^5d})|^2dt.
\]
 It follows that 
\begin{align*}
\sumstar_{(d,2)=1}|B_U(d)|^2 \Phi\left(\tfrac{N(d)}{X}\right) \ll  \left( \log \frac{X}{U} \right)^2 \int_{-\infty}^{+\infty}\int_{-\infty}^{+\infty}
 |w(it_1)||w(it_2)|\sideset{}{^*}\sum_{(d,2)=1}|L(\tfrac{1}{2}+it_1,\chi_{8d})|^2|L(\tfrac{1}{2}+it_2,\chi_{8d})|^2 \Phi(\tfrac{N(d)}{X})dt_1dt_2.
\end{align*}
 We estimation the sum over $d$ above using Corollary \ref{4momentupperbound} when $|t_1|,|t_2| \leq X$ and Lemma \ref{lem:2.3} otherwise.  We then deduce  by the exponential decay of $w(it)$ in $t$ that
\begin{align}
\label{equ:B_U}
 \sumstar_{(d,2)=1}|B_U(d)|^2 \Phi\left(\tfrac{N(d)}{X}\right) \ll X (\log X)^{9+\varepsilon} .
\end{align}

  Combining \eqref{equ:S(U,U)} and  \eqref{equ:B_U}, we see that
\begin{align}
\label{equ:asy-smooth}
\begin{split}
& \sumstar_{(d,2)=1}L(\tfrac{1}{2},\chi_{(1+i)^5d})^4 \Phi\left(\tfrac{N(d)}{X}\right) \\
=& \sumstar_{(d,2)=1}(A_U(d)+B_U(d))^2 \Phi\left(\tfrac{N(d)}{X}\right)\\
=&  \sumstar_{(d,2)=1}A_U(d)^2 \Phi\left(\tfrac{N(d)}{X}\right) + O \Big ( \sumstar_{(d,2)=1}|B_U(d)|^2 \Phi\left(\tfrac{N(d)}{X}\right) + 2\sumstar_{(d,2)=1} |A_U(d)|
|B_U(d)|  \Phi\left(\tfrac{N(d)}{X}\right) \Big ) \\
=&
  \frac{\pi a_{4} \widehat{\Phi}(1)}{2^7 \cdot 3^4 \cdot 5^2 \cdot 7 \cdot \zeta_K(2)} (\frac {\pi}{4})^{10} X (\log X)^{10}+ O \left(X (\log X)^{9.5+\varepsilon}  +  X (\log X)^{-5} \Phi_{(5)} \right),
\end{split}
\end{align}
 where the last expression above follows from an application of the Cauchy-Schwarz inequality to estimate the sum involving the product of $|A_U(d)|$ and
$|B_U(d)|$.

We now take $\Phi(t)$ to be supported on $[1,2]$ satisfying $\Phi(t) =1$ for $t \in (1+\mathcal{Z}^{-1}, 2-\mathcal{Z}^{-1})$ and  $\Phi^{(\nu)} (t)\ll_{\nu} \mathcal{Z}^{\nu}$ for all rational integer $\nu \geq 0$. We then deduce that $\Phi_{(\nu)}  \ll_{\nu} \mathcal{Z}^{\nu}$, and that $\widehat{\Phi}(1) = 1 + O(\mathcal{Z}^{-1})$. Thus \eqref{equ:asy-smooth} implies that
\begin{align*}
 & \sumstar_{(d,2)=1}L(\tfrac{1}{2},\chi_{(1+i)^5d})^4\Phi(\tfrac{N(d)}{X}) \\
 =&
  \frac{\pi a_{4} \widehat{\Phi}(1)}{2^7 \cdot 3^4 \cdot 5^2 \cdot 7 \cdot \zeta_K(2)} (\frac {\pi}{4})^{10}X (\log X)^{10} + O \left(  X (\log X)^{10} \mathcal{Z}^{-1} +X (\log X)^{9.5+\varepsilon}+  X (\log X)^{-5} \mathcal{Z}^5\right).
\end{align*}
  We then deduce by taking $\mathcal{Z}=\log X$ that
\begin{align}
 \label{equ:lower}
 \sumstar_{\substack{(d,2)=1 \\ X < N(d) \leq 2X}}L(\tfrac{1}{2},\chi_{(1+i)^5d})^4
 \geq \sumstar_{(d,2)=1}L(\tfrac{1}{2},\chi_{(1+i)^5d})^4\Phi(\tfrac{N(d)}{X}) =
 \frac{\pi a_{4} \widehat{\Phi}(1)}{2^7 \cdot 3^4 \cdot 5^2 \cdot 7 \cdot \zeta_K(2)} (\frac {\pi}{4})^{10}X (\log X)^{10}  + O \left( X (\log X)^{9.5+\varepsilon} \right).
\end{align}
Similarly,  we can choose  $\Phi(t)$ in (\ref{equ:asy-smooth})  such that $\Phi(t) = 1 $ for $t \in [1 ,  2]$, $\Phi(t) =0$ for all $t \notin (1-\mathcal{Z}^{-1},2+\mathcal{Z}^{-1}) $,  and  $\Phi^{(\nu)}(t) \ll_{\nu} \mathcal{Z}^{\nu}$ for all $\nu \geq 0$. Taking $\mathcal{Z}=\log X$, we can deduce that
\begin{align}
\label{equ:upper}
 \sumstar_{\substack{(d,2)=1 \\ X < N(d) \leq 2X}}L(\tfrac{1}{2},\chi_{(1+i)^5d})^4
 \leq \sumstar_{(d,2)=1}L(\tfrac{1}{2},\chi_{(1+i)^5d})^4\Phi(\tfrac{N(d)}{X})=
  \frac{\pi a_{4} \widehat{\Phi}(1)}{2^7 \cdot 3^4 \cdot 5^2 \cdot 7 \cdot \zeta_K(2)} (\frac {\pi}{4})^{10}X (\log X)^{10}  + O \left( X (\log X)^{9.5+\varepsilon} \right).
\end{align}
Combining \eqref{equ:lower} and \eqref{equ:upper}, we obtain that
\begin{align*}
 \sumstar_{\substack{(d,2)=1 \\ X < N(d) \leq 2X}}L(\tfrac{1}{2},\chi_{(1+i)^5d})^4
=\frac{\pi a_{4} \widehat{\Phi}(1)}{2^7 \cdot 3^4 \cdot 5^2 \cdot 7 \cdot \zeta_K(2)} (\frac {\pi}{4})^{10}X (\log X)^{10}  + O \left( X (\log X)^{9.5+\varepsilon} \right).
\end{align*}
 The assertion of Theorem \ref{main-thm} now follows by summing the above over $X= \frac{x}{2}$, $X= \frac{x}{4}$, $\dots$ and then resetting $x$ to be $X$.

\subsection{Proof of Theorem \ref{theo:mainthm}}
    In this section, we complete the proof for Theorem \ref{theo:mainthm}.  We first apply the Cauchy-Schwartz inequality to see that
\begin{align}
\label{cauchy-lower}
  \sumstar_{(d,2)=1}L (\tfrac{1}{2},\chi_{(1+i)^5d})^4\Phi \left(\tfrac{d}{X} \right)
 \geq \frac {\mathcal{A}^2}{\mathcal{B}},
\end{align}
  where
\begin{align*}
 \mathcal{A}=& \sumstar_{(d,2)=1} A_U(d) L (\tfrac{1}{2},\chi_{(1+i)^5d})^2  \Phi\left(\frac{N(d)}{X} \right), \\
\mathcal{B}=& \sumstar_{(d,2)=1} A_U(d)^2 \Phi\left(\frac{N(d)}{X}\right).
\end{align*}
  Here we recall that $A_U(d)$ is defined as in \eqref{equ:13.1}. 

We can evaluate $\mathcal{B}$ similar to our evaluation of $S(U_1, U_2)$ in Section \ref{sec: pf1stmainthm}, except that we now set $U_1=U_2= U= X^{1-4\varepsilon}$. By setting $Y=X^{\frac{1}{2}}U_1^{-\frac{1}{4}}U_2^{-\frac{1}{4}}$ again, we deduce from \eqref{S1k0est}, Lemma \ref{lem:13.1}, Lemma \ref{lem:k=square} and Lemma \ref{lem:error} that unconditionally, 
\begin{align*}
\mathcal{B}  = \frac{\pi a_{4} \left(1+ O(\varepsilon) \right)}{2^7 \cdot 3^4 \cdot 5^2 \cdot 7 \cdot \zeta_K(2)} (\frac {\pi}{4})^{10} \widehat{\Phi}(1) X (\log X)^{10} + O \left(X (\log X)^{9} + X \Phi_{(5)}\right),
\end{align*}
with the implied constant in $O(\varepsilon)$ being absolute.

To evaluate $\mathcal{A}$, we recast it as
\[
\mathcal{A} = 4 \sumstar_{(d,2)=1}\sum_{\substack{n_1, n_2 \equiv 1 \bmod {(1+i)^3}}} \frac{\chi_{(1+i)^5d}(n_1n_2)d_{[i]}(n_1)d_{[i]}(n_2)}{N(n_1n_2)^{\frac{1}{2}}} h_1(d,n_1,n_2),
\]
where
\[
h_1(x,y,z) = \Phi \left(\frac{N(x)}{X}\right)V\left(\frac{N(y)}{U}\right)V\left(\frac{N(z)}{N(x)}\right).
\]
A similar argument to our evaluation of $\mathcal{B}$ above implies that, by taking $Y=X^{\frac{1}{2}}U^{-\frac{1}{4}} X^{-\frac{1}{4}}$, we have unconditionally,
\begin{align*}
\mathcal{A} =  \frac{\pi a_{4} \left(1+ O(\varepsilon) \right)}{2^7 \cdot 3^4 \cdot 5^2 \cdot 7 \cdot \zeta_K(2)} (\frac {\pi}{4})^{10} \widehat{\Phi}(1) X (\log X)^{10} + O \left(X (\log X)^{9}+X \Phi_{(5)}\right),
\end{align*}
with the implied constant in $O(\varepsilon)$ being absolute.

We now take $\mathcal{Z}=\log X$ and take $\Phi$ so that $\Phi(t) =1$ for $t \in (1+\mathcal{Z}^{-1}  ,  2-\mathcal{Z}^{-1})$, $\Phi(t) =0$ for all $t \notin (1,2) $,  and  $\Phi^{(\nu)} (t)\ll_{\nu} \mathcal{Z}^{\nu}$ for all $\nu \geq 0$. Applying our estimations for $\mathcal{A}$ and $\mathcal{B}$ in \eqref{cauchy-lower}, we deduce that
\begin{align*}
  \sumstar_{\substack{(d,2)=1 \\ X <N(d) \leq 2X}}L (\tfrac{1}{2},\chi_{(1+i)^5d})^4
 \geq  \left(1+ O(\varepsilon) \right)\frac{\pi a_{4}}{2^7 \cdot 3^4 \cdot 5^2 \cdot 7 \cdot \zeta_K(2)} (\frac {\pi}{4})^{10} \widehat{\Phi}(1) X (\log X)^{10}.
\end{align*}
The assertion of Theorem \ref{theo:mainthm} now follows by summing the above over $X= \frac{x}{2}$, $X= \frac{x}{4}$, $\dots$ and then resetting $x$ to be $X$.

\vspace*{.5cm}

\noindent{\bf Acknowledgments.} P. G. is supported in part by NSFC grant 11871082.

\bibliography{biblio}
\bibliographystyle{amsxport}

\vspace*{.5cm}

\end{document}